\newtheorem{theorem}{Theorem}[section]
\newtheorem{lemma}[theorem]{Lemma}
\newtheorem{proposition}[theorem]{Proposition}
\newtheorem{corollary}[theorem]{Corollary}
\newtheorem{definition}[theorem]{Definition}
\newtheorem{question}[theorem]{Question}
\newtheorem{principle}[theorem]{Principle}
\DeclareMathOperator{\Frob}{Frob}
\DeclareMathOperator{\tr}{tr}
\DeclareMathOperator{\Spec}{Spec}
\newcommand{\cA}{{\mathcal A}}
\newcommand{\cB}{{\mathcal B}}
\newcommand{\cC}{{\mathcal C}}
\newcommand{\cH}{{\mathcal H}}
\newcommand{\cL}{{\mathcal L}}
\newcommand{\cO}{{\mathcal O}}
\newcommand{\frg}{{\mathfrak g}}
\newcommand{\ffrm}{{\mathfrak m}}
\newcommand{\frp}{{\mathfrak p}}
\newcommand{\bbA}{{\mathbb A}}
\newcommand{\bbF}{{\mathbb F}}
\newcommand{\bbQ}{{\mathbb Q}}
\newcommand{\bbT}{{\mathbb T}}
\newcommand{\bbZ}{{\mathbb Z}}
\newcommand{\ord}{{\operatorname{ord}}}
\newcommand{\GL}{{\operatorname{GL}}}
\newcommand{\Bun}{{\operatorname{Bun}}}
\newcommand{\End}{{\operatorname{End}}}
\newcommand{\SL}{{\operatorname{SL}}}
\newcommand{\Art}{{\operatorname{Art}}}
\newcommand{\Gal}{{\operatorname{Gal}}}
\newcommand{\Aut}{{\operatorname{Aut}}}
\newcommand{\Lift}{{\operatorname{Lift}}}
\newcommand{\Def}{{\operatorname{Def}}}
\newcommand{\Sets}{{\operatorname{Sets}}}
\newcommand{\Cent}{{\operatorname{Cent}}}
\newcommand{\hG}{{\widehat{G}}}
\newcommand{\hT}{{\widehat{T}}}
\title{Potential automorphy of $\hG$-local systems}
\begin{document}
\author{Jack A. Thorne}
\maketitle

\begin{abstract}
Vincent Lafforgue has recently made a spectacular breakthrough in the setting of the global Langlands correspondence for global fields of positive characteristic, by constructing the `automorphic--to--Galois' direction of the correspondence for an arbitrary reductive group $G$. We discuss a result that starts with Lafforgue's work and proceeds in the opposite (`Galois--to--automorphic') direction. 
\end{abstract}

\vskip .5in

\section{Introduction}

Let $\bbF_q$ be a finite field, and let $G$ be a split reductive group over $\bbF_q$. Let $X$ be a smooth projective connected curve over $\bbF_q$, and let $K$ be its function field. Let $\bbA_K$ denote the ring of ad\`eles of $K$. Automorphic forms on $G$ are locally constant functions $f : G(\bbA_K) \to \overline{\bbQ}$ which are invariant under left translation by the discrete group $G(K) \subset G(\bbA_K)$.\footnote{This is not the full definition. In the rest of this article we consider only cuspidal automorphic forms, which are defined precisely below.}  The space of automorphic forms is a representation of $G(\bbA_K)$, and its irreducible constituents are called automorphic representations. 

Let $\ell \nmid q$ be a prime. According to the Langlands conjectures, any automorphic representation $\pi$ of $G(\bbA_K)$ should give rise to a continuous representation $\rho(\pi) : \pi_1^\text{\'et}(U) \to \hG(\overline{\bbQ}_\ell)$ (for some open subscheme $U \subset X$). This should be compatible with the (known) unramified local Langlands correspondence, which describes the pullback of $\rho(\pi)$ to $\pi_1^\text{\'et}(\bbF_{q_v})$ for every closed point $v : \Spec \bbF_{q_v} \hookrightarrow U$ in terms of the components $\pi_v$ of a factorization $\pi = \otimes'_v \pi_v$ into representations of the local groups $G(K_v)$.

Vincent Lafforgue has given an amazing construction of the representation $\rho(\pi)$, which crystallizes and removes many of the ambiguities in this picture in a beautiful way. We give a brief description of this work below. 

Our main goal in this article is to describe a work due to Gebhard B\"ockle, Michael Harris, Chandrashekhar Khare, and myself, where we establish a partial converse to this result \cite{Boc17}. We restrict to representations $\rho : \pi_1^\text{\'et}(X) \to \hG(\overline{\bbQ}_\ell)$ of Zariski dense image, and show that any such representation is \emph{potentially automorphic}, in the sense that there exists a Galois cover $Y \to X$ such that the pullback of $\rho$ to $\pi_1^\text{\'et}(Y)$ is contained in the image of Lafforgue's construction. We will guide the reader through the context surrounding this result, and discuss some interesting open questions that are suggested by our methods.

\section{Review of the case $G = \GL_n$}

We begin by describing what is known about the Langlands conjectures in the setting of the general linear group $G = \GL_n$. In this case very complete results were obtained by Laurent Lafforgue \cite{Laf02}. As in the introduction, we write $\bbF_q$ for the finite field with $q$ elements, and let $X$ be a smooth, projective, connected curve over $\bbF_q$.  The Langlands correspondence predicts a relation between representations of the absolute Galois group of $K$ and automorphic representations of the group $\GL_n(\bbA_K)$. We now describe each of these in turn.

Let $K^s$ be a fixed separable closure of $K$. We write $\Gamma_K = \Gal(K^s / K)$ for the absolute Galois group of $K$, relative to $K^s$. It is a profinite group. If $S \subset X$ is a finite set of closed points, then we write $K_S \subset K^s$ for the maximal extension of $K$ which is unramified outside $S$, and $\Gamma_{K, S} = \Gal(K_S / K)$ for its Galois group. This group has a geometric interpretation: if we set $U = X - S$, and write $\overline{\eta}$ for the geometric generic point of $U$ corresponding to $K^s$, then there is a canonical identification $\Gamma_{K, S} \cong \pi_1^\text{\'et}(U, \overline{\eta})$ of the Galois group with the \'etale fundamental group of the open curve $U$. 

Fix a prime $\ell \nmid q$ and a continuous character $\omega : \Gamma_K \to \overline{\bbQ}_\ell^\times$ of finite order. If $n \geq 1$ is an integer, then we write $\Gal_{n, \omega}$ for the set of conjugacy classes of continuous representations $\rho : \Gamma_K \to \GL_n(\overline{\bbQ}_\ell)$ with the following properties:
\begin{enumerate}
\item $\rho$ factors through a quotient $\Gamma_K \to \Gamma_{K, S}$, for some finite subscheme $S \subset X$.
\item $\det \rho = \omega$.
\item $\rho$ is irreducible.
\end{enumerate}

To describe automorphic representations, we need to introduce ad\`eles. If $v \in X$ is a closed point, then the local ring $\cO_{X, v}$ is a discrete valuation ring, and determines a valuation $\ord_v : K^\times \to \bbZ$ which we call a place of $K$. The completion $K_v$ of $K$ with respect to this valuation is a local field, which can be identified with the field of Laurent series $\bbF_{q_v}((t_v))$, where $\bbF_{q_v}$ is the residue field of $\cO_{X, v}$ and $t_v \in \cO_{X, v}$ is a uniformizing parameter. We write $\cO_{K_v} \subset K_v$ for the valuation ring.

The ad\`ele ring $\bbA_K$ is the restricted direct product of the rings $K_v$, with respect to their open compact subrings $\cO_{K_v}$. It is a locally compact topological ring. Taking ad\`ele points of $\GL_n$, we obtain the group $\GL_n(\bbA_K)$, which is a locally compact topological group, and which can itself be identified with the restricted direct product of the groups $\GL_n(K_v)$ with respect to their open compact subgroups $\GL_n(\cO_{K_v})$.

If $n = 1$, then $\GL_1(\bbA_K) = \bbA_K^\times$ and class field theory gives a continuous map
\[ \Art_K : K^\times \backslash \bbA_K^\times \to \Gamma_K^\text{ab}, \]
which is injective with dense image. We write $\cA_{n, \omega}$ for the $\overline{\bbQ}_\ell$-vector space of functions $f : \GL_n(\bbA_K) \to \overline{\bbQ}_\ell$ satisfying the following conditions:
\begin{enumerate}
\item $f$ is invariant under left translation by the discrete subgroup $\GL_n(K) \subset \GL_n(\bbA_K)$.
\item For any $z \in \bbA_K^\times$, $g \in \GL_n(\bbA_K)$, $f(gz) = \omega(\Art_K(z)) f(g)$.
\item $f$ is smooth, i.e.\ there exists an open compact subgroup $U \subset \GL_n(\bbA_K)$ such that for all $u \in U$, $g \in \GL_n(\bbA_K)$, $f(gu) = f(g)$.
\end{enumerate}
Then the group $\GL_n(\bbA_K)$ acts on $\cA_{n, \omega}$ by right translation. We write $\cA_{n, \omega, \text{cusp}} \subset \cA_{n, \omega}$ for the subspace of cuspidal functions, i.e.\ those satisfying the following additional condition:
\begin{enumerate}
\item[4.] For each proper parabolic subgroup $P \subset \GL_n$, of unipotent radical $N$, we have 
\[ \int_{n \in N(K) \backslash N(\bbA_K)} f(ng) \, dn = 0 \]
for all $g \in \GL_n(\bbA_K)$. (Note that the quotient $N(K) \backslash N(\bbA_K)$ is compact, so the integral, taken with respect to a quotient Haar measure on $N(\bbA_K)$, is well-defined.)
\end{enumerate}
With this definition, $\cA_{n, \omega, \text{cusp}} \subset \cA_{n, \omega}$ is an $\overline{\bbQ}_\ell[\GL_n(\bbA_K)]$-submodule. The following theorem describes the basic structure of this representation of $\GL_n(\bbA_K)$.
\begin{theorem}
\begin{enumerate} \item $\cA_{n, \omega, \text{cusp}}$ is a semisimple admissible $\overline{\bbQ}_\ell[\GL_n(\bbA_K)]$-module. Each irreducible constituent $\pi \subset  \cA_{n, \omega, \text{cusp}}$ appears with multiplicity 1. 
\item If $\pi \subset \cA_{n, \omega, \text{cusp}}$ is an irreducible submodule, then there is a  decomposition $\pi = \otimes'_v \pi_v$ of $\pi$ as a restricted tensor product of irreducible admissible representations $\pi_v$ of the groups $\GL_n(K_v)$ (where $v$ runs over the set of all places of $K$).
\end{enumerate}
\end{theorem}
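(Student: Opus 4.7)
The plan is to establish (1) and (2) in turn. Part (1) combines admissibility and discrete decomposability (both consequences of the compactness of cuspidal support) with multiplicity one (coming from the theory of Whittaker models); part (2) is a formal consequence of Flath's theorem on restricted tensor products.

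For admissibility I would first verify that every $f \in \cA_{n, \omega, \text{cusp}}$ is compactly supported modulo $Z(\bbA_K) \GL_n(K)$, where $Z$ denotes the center of $\GL_n$. This is a standard consequence of reduction theory for $\GL_n$ over the function field $K$: on a Siegel set, the size of an automorphic form is controlled by its constant terms along standard parabolics, and cuspidality makes all of these vanish. Consequently $\cA_{n, \omega, \text{cusp}}$ embeds into $L^2(\GL_n(K) Z(\bbA_K) \backslash \GL_n(\bbA_K), \omega)$, on which convolution by any compactly supported smooth function is an integral operator with compactly supported kernel, hence Hilbert--Schmidt. Spectral theory then yields a Hilbert direct sum decomposition into irreducible unitary representations, each appearing with finite multiplicity; passing to the subspace of smooth vectors recovers $\cA_{n, \omega, \text{cusp}}$ as
\[ \cA_{n, \omega, \text{cusp}} = \bigoplus_{\pi} m(\pi) \pi, \quad m(\pi) < \infty, \]
and the finiteness of $U$-invariants for every open compact $U \subset \GL_n(\bbA_K)$ gives admissibility.

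The substantial obstacle is to show that $m(\pi) \leq 1$. I would follow the Piatetski-Shapiro--Shalika method: fix a nontrivial additive character $\psi$ of $\bbA_K/K$, let $N \subset \GL_n$ be the group of unipotent upper triangular matrices, and attach to each $f \in \cA_{n, \omega, \text{cusp}}$ the Whittaker function
\[ W_f(g) = \int_{N(K) \backslash N(\bbA_K)} f(ng) \psi^{-1}(n) \, dn. \]
An iterated Fourier expansion along the mirabolic subgroup, using the cuspidality of $f$ along each standard parabolic in turn, expresses $f$ as an explicit sum built from $W_f$; in particular the assignment $f \mapsto W_f$ is injective on any irreducible subrepresentation. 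Combined with the local uniqueness of Whittaker functionals on irreducible admissible representations of $\GL_n(K_v)$ (Gelfand--Kazhdan / Shalika), this forces $\dim \Hom_{\GL_n(\bbA_K)}(\pi, \cA_{n, \omega, \text{cusp}}) \leq 1$. The delicate point is the Fourier expansion step, which requires the full strength of cuspidality along all parabolics.

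For part (2), I would invoke Flath's theorem on the decomposition of irreducible admissible representations of a restricted product of reductive $p$-adic groups. By admissibility, any irreducible $\pi \subset \cA_{n, \omega, \text{cusp}}$ contains a nonzero vector fixed by $\prod_{v \notin S} \GL_n(\cO_{K_v})$ for some finite set $S$ of places. The commutativity of the spherical Hecke algebras at the places $v \notin S$ equips the space of such vectors with an action of a tensor product of commutative algebras, and a now-standard formal argument produces a canonical factorization $\pi = \otimes'_v \pi_v$ into irreducible admissible local representations $\pi_v$, unramified at every $v \notin S$.
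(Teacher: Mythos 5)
The paper does not prove this statement at all: it is quoted as known background (the classical structure theory of cusp forms for $\GL_n$ over a function field, i.e.\ Gelfand--Piatetski-Shapiro, Shalika's multiplicity one, and Flath's factorization), so there is no internal proof to compare against. Your sketch is precisely the standard argument from that literature and is correct in outline: compact support of cusp forms modulo $Z(\bbA_K)\GL_n(K)$ via reduction theory and vanishing constant terms, discreteness and admissibility of the cuspidal spectrum, multiplicity one via the mirabolic Fourier expansion together with local uniqueness of Whittaker functionals, and Flath's theorem for $\pi \cong \otimes'_v \pi_v$. Two small refinements are worth recording. First, the coefficient field here is $\overline{\bbQ}_\ell$, not $\bbC$, so the Hilbert--Schmidt/spectral step requires either fixing an abstract isomorphism $\overline{\bbQ}_\ell \cong \bbC$ or an algebraic substitute; in the function field setting the latter is easy, since cusp forms are locally constant and compactly supported modulo $Z(\bbA_K)\GL_n(K)$, so admissibility follows from finiteness of the relevant double coset spaces $\GL_n(K)\backslash \GL_n(\bbA_K)/U Z(\bbA_K)$ in the support region, and semisimplicity follows from the nondegenerate $\GL_n(\bbA_K)$-invariant pairing $(f_1,f_2)\mapsto \int f_1 f_2$ on the cuspidal subspace, with no analysis needed. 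Second, in the multiplicity one step you should make explicit that the Fourier expansion shows every irreducible cuspidal $\pi$ is globally generic (its Whittaker functional is nonzero), which together with local uniqueness gives $\dim \Hom_{\GL_n(\bbA_K)}(\pi, \cA_{n,\omega,\text{cusp}}) \leq 1$; as written this is implicit in your injectivity claim but it is the point that makes the argument close.
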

If $\pi$ is an irreducible constituent of $\cA_{n, \omega, \text{cusp}}$, then we call $\pi$ a cuspidal automorphic representation of $\GL_n(\bbA_K)$. We write $\Aut_{n, \omega}$ for the set of isomorphism classes of cuspidal automorphic representations of $\GL_n(\bbA_K)$. 

We can now state Langlands reciprocity for $\GL_n$.
\begin{theorem}\label{thm_gl_n_preliminary}
There is a bijection $\Gal_{n, \omega} \leftrightarrow \Aut_{n, \omega}$.
\end{theorem}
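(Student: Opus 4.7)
The plan is to construct the bijection by characterizing it through the unramified local correspondence at almost all places: $\rho \leftrightarrow \pi$ if and only if, at every closed point $v \in X$ where both are unramified, the semisimple conjugacy class of $\rho(\Frob_v)$ in $\GL_n(\overline{\bbQ}_\ell)$ matches the Satake parameter of $\pi_v$. Once this compatibility is imposed, uniqueness in both directions is automatic: Chebotarev density together with semisimplicity of $\rho$ yields it on the Galois side, and strong multiplicity one for $\GL_n$ yields it on the automorphic side. I would then establish existence by induction on $n$, alternating between the two directions.

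For the automorphic-to-Galois direction, following Drinfeld in the case $n=2$ and L.~Lafforgue in general, I would introduce the moduli stack of rank-$n$ shtukas on $X$ with level structure at a fixed finite set $S \subset X$. Its $\ell$-adic cohomology carries commuting actions of the Hecke algebra away from $S$, of $\Gamma_K \times \Gamma_K$ through the two legs, and of partial Frobenii. The geometric work is then threefold: (i) to truncate these moduli by the Harder--Narasimhan polygon and obtain open substacks of finite type admitting suitable smooth compactifications; (ii) to compute the trace of the combined Hecke--Frobenius action on cohomology both by the Grothendieck--Lefschetz formula and by the Arthur--Selberg trace formula, and to match the two expressions; (iii) to extract from the cuspidal part of the cohomology, for each $\pi \in \Aut_{n, \omega}$, an $n$-dimensional representation $\rho(\pi)$ whose Frobenius traces agree with the Satake parameters of $\pi_v$ at unramified $v$.

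For the Galois-to-automorphic direction, I would invoke the converse theorem for $\GL_n$ in the style of Piatetski-Shapiro and Cogdell: a candidate collection of local factors $\{\pi_v\}$ glues to a cuspidal automorphic representation provided the Rankin--Selberg $L$-functions $L(s, \pi \otimes \tau)$ are entire, bounded in vertical strips, and satisfy the expected functional equation for every cuspidal $\tau$ on $\GL_m(\bbA_K)$ with $m < n$. By the inductive hypothesis each such $\tau$ arises from a Galois representation $\rho(\tau)$, so $L(s, \pi \otimes \tau) = L(s, \rho \otimes \rho(\tau))$ has the required analytic properties by Grothendieck's rationality, the functional equation for $L$-functions of constructible $\ell$-adic sheaves on $X$, and Deligne's purity.

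The main obstacle is squarely the automorphic-to-Galois direction, and within it the geometric steps (i) and (ii): controlling the moduli of shtukas at the boundary, cleanly separating the cuspidal part of the cohomology from the Eisenstein contributions, and carrying out the trace formula comparison with enough precision to read off $\rho(\pi)$ at each unramified place. Once these geometric inputs are in hand, the converse-theorem step is comparatively formal and the induction closes.
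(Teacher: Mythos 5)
Your proposal is essentially the paper's own treatment: the theorem is L.~Lafforgue's, and the paper sketches exactly this strategy --- uniqueness via Chebotarev plus strong multiplicity one, then an induction on $n$ (Deligne's ``principe de r\'ecurrence'') alternating the shtuka/trace-formula construction of $\rho(\pi)$ with the Piatetski-Shapiro converse theorem fed by Grothendieck's theory of $L$-functions of $\ell$-adic sheaves. The only ingredient the paper names that you leave implicit is Laumon's product formula expressing the global $\epsilon$-factor as a product of local ones, which is what makes the functional equations usable in the converse-theorem step; otherwise your outline matches the cited proof.
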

In order for this theorem to have content, we need to describe how to characterize the bijection whose existence it asserts. The most basic characterization uses restriction to unramified places. Let $v$ be a place of $K$. If $\rho \in \Gal_{n, \omega}$, then we can consider its restriction $\rho_v = \rho|_{W_{K_v}}$ to the Weil group $W_{K_v} \subset \Gamma_{K_v}$.\footnote{Here $\Gamma_{K_v} = \Gal(K_v^s / K_v)$ is the absolute Galois group of $K_v$, with respect to a fixed choice of separable closure. An embedding $K^s \to K_v^s$ extending the map $K \to K_v$ determines an embedding $\Gamma_{K_v} \to \Gamma_K$. The Weil group $W_{K_v} \subset \Gamma_{K_v}$ is the subgroup of elements which act on the residue field by an integral power of the (geometric) Frobenius element $\Frob_v$; see for example \cite{Tat79} for a detailed discussion.} If $\pi \in  \Aut_{n, \omega}$, then we can consider the factor $\pi_v$, which is an irreducible admissible representation of the group $\GL_n(K_v)$. 
\begin{definition}
\begin{enumerate}
\item A continuous homomorphism $\rho_v : W_{K_v} \to \GL_n(\overline{\bbQ}_\ell)$ is said to be unramified if it factors through the unramified quotient $W_{K_v} \to \bbZ$.
\item An irreducible admissible representation of the group $\GL_n(K_v)$ is said to be unramified if the subspace $\pi_v^{\GL_n(\cO_{K_v})}$ of $\GL_n(\cO_{K_v})$-invariant vectors is non-zero. 
\end{enumerate}
\end{definition}
These two kinds of unramified objects are related by the \emph{unramified local Langlands correspondence}, which can be phrased as follows:
\begin{theorem}\label{thm_satake_isomorphism}
Let $v$ be a place of $K$. There is a canonical\footnote{As the proof shows, we need to fix as well a choice of a square root of $q$ in $\overline{\bbQ}_\ell$.} bijection $\pi_v \mapsto t(\pi_v)$ between the following two sets:
\begin{enumerate}
\item The set of isomorphism classes of unramified irreducible admissible representations $\pi_v$ of $\GL_n(K_v)$ over $\overline{\bbQ}_\ell$.
\item The set of semisimple conjugacy classes $t$ in $\GL_n(\overline{\bbQ}_\ell)$.
\end{enumerate}
\end{theorem}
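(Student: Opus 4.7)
The plan is to identify both sides of the claimed bijection with the set of characters of the spherical Hecke algebra of $\GL_n(K_v)$. Let $\cH_v$ denote the convolution algebra of compactly supported, bi-$\GL_n(\cO_{K_v})$-invariant, $\overline{\bbQ}_\ell$-valued functions on $\GL_n(K_v)$, taken with respect to the Haar measure giving $\GL_n(\cO_{K_v})$ volume one. A standard Gelfand pair argument (using the transpose-inverse anti-involution, which preserves every $\GL_n(\cO_{K_v})$-double coset thanks to the Cartan decomposition and the fact that diagonal matrices are symmetric) shows that $\cH_v$ is commutative.

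The first step is to show that $\pi_v \mapsto \pi_v^{\GL_n(\cO_{K_v})}$ induces a bijection between the set in (1) and the set of $\overline{\bbQ}_\ell$-valued characters of $\cH_v$. This requires: (a) if $\pi_v$ is irreducible admissible then $\dim \pi_v^{\GL_n(\cO_{K_v})} \leq 1$, so when nonzero it is a character of $\cH_v$; (b) that character determines $\pi_v$, since $\pi_v$ is generated by its spherical vector and Jacquet's theorem identifies $\pi_v$ with a subquotient determined by this data; and (c) every character of $\cH_v$ arises in this way, constructed as the unique unramified irreducible subquotient of an appropriate unramified principal series induced from the diagonal torus $T(K_v)$.

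The second step is the Satake isomorphism, identifying $\cH_v$ with the algebra of $W$-invariant regular functions on the dual torus $T^\vee \subset \GL_n(\overline{\bbQ}_\ell)$ (with $W = S_n$). The Satake transform
\[ \cS f(t) = \delta_B(t)^{1/2} \int_{N(K_v)} f(tn)\, dn \]
(where $B = TN$ is the upper-triangular Borel and $\delta_B$ its modular character) sends $\cH_v$ into functions on $T(K_v)/T(\cO_{K_v}) \cong X_\ast(T)$. The factor $\delta_B^{1/2}$ is exactly where the chosen square root of $q_v$ (and hence of $q$) enters, accounting for the footnote. Using the Cartan decomposition $\GL_n(K_v) = \bigsqcup_{\lambda} \GL_n(\cO_{K_v})\, \varpi_v^\lambda\, \GL_n(\cO_{K_v})$ over dominant cocharacters $\lambda$, and a triangularity argument with respect to the dominance order, one shows $\cS$ is injective with image the $W$-invariants $\overline{\bbQ}_\ell[X_\ast(T)]^W = \overline{\bbQ}_\ell[T^\vee]^W$.

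Finally, characters of $\overline{\bbQ}_\ell[T^\vee]^W$ correspond to $W$-orbits in $T^\vee(\overline{\bbQ}_\ell)$, which by Chevalley's restriction theorem correspond to semisimple conjugacy classes in $\GL_n(\overline{\bbQ}_\ell)$. Composing the three bijections yields the map $\pi_v \mapsto t(\pi_v)$. The main technical obstacle, and the heart of the argument, is the surjectivity of the Satake transform onto the Weyl invariants: the triangularity computation shows $\cS(\cH_v) \subset \overline{\bbQ}_\ell[X_\ast(T)]^W$, but to see equality one must verify that, as $\lambda$ ranges over dominant cocharacters, the leading terms of $\cS\bigl(1_{\GL_n(\cO_{K_v})\, \varpi_v^\lambda\, \GL_n(\cO_{K_v})}\bigr)$ are exactly the monomial symmetric functions indexed by $\lambda$, which requires a careful tracking of measures of unipotent intersections with the relevant double cosets.
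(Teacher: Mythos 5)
Your proof is correct and takes essentially the same route as the paper's sketch: identify unramified irreducible admissible representations with characters of the spherical Hecke algebra, apply the Satake isomorphism (the chosen square root of $q$ entering through $\delta_B^{1/2}$), and identify characters of $\overline{\bbQ}_\ell[X_\ast(T)]^W \cong \overline{\bbQ}_\ell[\hG]^{\hG}$ with semisimple conjugacy classes via the adjoint quotient (Chevalley restriction). The only small slip is in the Gelfand-trick: the relevant anti-involution is the transpose $g \mapsto g^{T}$ (an anti-automorphism fixing each double coset by the Cartan decomposition), not the transpose-inverse, which is an automorphism.
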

\begin{proof}[Proof (sketch)]
The proof, which is valid for any reductive group $G$ over $\bbF_q$, goes via the Satake isomorphism. If $G$ is split then this is an isomorphism
\[ \cH(G(K_v), G(\cO_{K_v})) \otimes_\bbZ \bbZ[q_v^{\pm \frac{1}{2}}] \to \bbZ[ \hG ]^{\hG} \otimes \bbZ[q_v^{\pm \frac{1}{2}}], \]
\[ T_{v, f} \mapsto f \]
where $\cH$ is the Hecke algebra of $G(\cO_{K_v})$-biinvariant functions $f : G(K_v) \to \bbZ$ of compact support, and $\bbZ[\hG]^{\hG}$ is the algebra of conjugation-invariant functions on the dual group $\hG$ (which is $\GL_n$ if $G = \GL_n$). If $\pi_v$ is an irreducible admissible representation of $G(K_v)$ over $\overline{\bbQ}_l$ and $\pi_v^{G(\cO_{K_v})} \neq 0$, then $\pi_v^{G(\cO_{K_v})}$ is a simple $\cH \otimes_\bbZ \overline{\bbQ}_l$-module, which therefore determines a homomorphism $\bbZ[ \hG ]^{\hG} \to \overline{\bbQ}_\ell$. The geometric invariant theory of the adjoint quotient of the reductive group $\hG$ implies that giving such a homomorphism is equivalent to giving a conjugacy class of semisimple elements in $\hG(\overline{\bbQ}_\ell)$. 
\end{proof}
Let $S$ be a finite set of places of $K$. We write $\Gal_{n, \omega, S} \subset \Gal_{n, \omega}$ for the set of $\rho$ such that for each place $v \not\in S$ of $K$, $\rho|_{W_{K_v}}$ is unramified (we say that `$\rho$ is unramified outside $S$'). We write $\Aut_{n, \omega, S} \subset \Aut_{n, \omega}$ for the set of $\pi = \otimes'_v \pi_v$ such that for each place $v \not\in S$ of $K$, $\pi_v$ is unramified (we say that `$\pi$ is unramified outside $S$'). We can now state a more precise version of Theorem \ref{thm_gl_n_preliminary}:
\begin{theorem}\label{thm_weak_correspondence_for_GL_n}
Let $S$ be a finite set of places of $K$. Then there is a bijection $\pi \mapsto \rho(\pi) : \Aut_{n, \omega, S} \to \Gal_{n, \omega, S}$ with the following property: for each place $v \not\in S$, $\rho(\pi)(\Frob_v)^\text{ss} \in t(\pi_v)$\footnote{Here and elsewhere, we write $x^\text{ss}$ for the semisimple part in the Jordan decomposition $x = x^\text{ss} x^\text{u}$ of an element $x$ of a linear algebraic group.}. 
\end{theorem}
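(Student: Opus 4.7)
The plan is to follow Laurent Lafforgue \cite{Laf02}, which extends Drinfeld's $n=2$ construction to general $n$ by exploiting the geometry of moduli stacks of shtukas, and to combine this with a converse theorem in the reverse direction.

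For the automorphic-to-Galois direction, I would introduce, for each effective divisor $N$ on $X$ whose support contains $S$, the moduli stack $\text{Cht}^n_N$ of rank $n$ shtukas with level-$N$ structure: a rank $n$ vector bundle $\cE$ on $X$ together with a modification to its Frobenius twist, whose pole and zero define a canonical map $\text{Cht}^n_N \to (X - N)^2$. On $\ell$-adic cohomology with compact support, this stack carries commuting actions of $\Gamma_{K, S} \times \Gamma_{K, S}$ (via the two projections to $X - N$) and of the prime-to-$N$ spherical Hecke algebra. Comparing the Arthur--Selberg trace formula for the level-$K_N$ invariants in $\cA_{n, \omega, \text{cusp}}$ with the Grothendieck--Lefschetz trace formula applied to Hecke correspondences on $\text{Cht}^n_N$, one isolates the cuspidal part of the cohomology as $\bigoplus_\pi \pi^{K_N} \otimes \rho(\pi) \otimes \rho(\pi)^\vee$, and reads off the Galois representation $\rho(\pi)$. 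The relation $\rho(\pi)(\Frob_v)^\text{ss} \in t(\pi_v)$ at $v \notin S$ then follows by matching the action of the unramified Hecke operators at $v$ on $\pi^{K_N}$ with the action of $\Frob_v$ on the corresponding $\rho(\pi)$-isotypic summand of the cohomology, via Theorem \ref{thm_satake_isomorphism}.

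For the Galois-to-automorphic direction, I would invoke the converse theorem of Cogdell--Piatetski-Shapiro. Starting from $\rho \in \Gal_{n, \omega, S}$, the local Langlands correspondence for $\GL_n$ over $K_v$, available in the function-field setting by work of Laumon--Rapoport--Stuhler, attaches to each $\rho|_{W_{K_v}}$ an irreducible admissible representation $\pi_v$ of $\GL_n(K_v)$, chosen to be unramified with $t(\pi_v) = \rho(\Frob_v)^\text{ss}$ at $v \notin S$. One must then show that the restricted tensor product $\pi = \otimes'_v \pi_v$ is cuspidal automorphic. By the converse theorem, it suffices to verify that for every cuspidal automorphic representation $\sigma$ of $\GL_m(\bbA_K)$ with $m < n$, the Rankin--Selberg L-function $L(s, \pi \times \sigma)$ is entire, bounded in vertical strips, and satisfies the expected functional equation, together with the analogous property after twists by characters. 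But by the forward direction applied to $\sigma$, this L-function coincides with the L-function attached to the $\ell$-adic representation $\rho \otimes \rho(\sigma)$ on $X$, and the required analytic properties follow from Grothendieck's rationality theorem and functional equation for L-functions of lisse $\overline{\bbQ}_\ell$-sheaves, together with Deligne's purity (Weil II).

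The main obstacle is the geometric analysis of $\text{Cht}^n_N$. This stack is not of finite type, so to extract well-defined cohomology and isolate the cuspidal contribution one must perform a Harder--Narasimhan truncation and then construct a suitable compactification --- Lafforgue's iterated blow-ups parametrizing degenerations of shtukas --- so that Deligne's purity theorem applies at the boundary. Controlling the Eisenstein parts of the cohomology and identifying the two-variable $\Gamma_{K,S}$-action with the one predicted by the spectral side of the trace formula (in particular, showing that it factors through a single copy of $\Gamma_{K,S}$ acting on the first tensor factor) are the technical heart of Lafforgue's work.
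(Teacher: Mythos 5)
Your proposal follows exactly the route the paper attributes to L.~Lafforgue \cite{Laf02}: the automorphic-to-Galois direction via the cohomology of moduli stacks of shtukas compared with the trace formula, and the Galois-to-automorphic direction via induction on $n$ (the `principe de r\'ecurrence de Deligne'), the Piatetski-Shapiro converse theorem, and Grothendieck's theory of $L$-functions of $\ell$-adic sheaves, which is the same sketch the paper gives. The only ingredient the paper singles out that you leave implicit is Laumon's product formula for $\epsilon$-factors, needed to express the Galois-side functional equation in the local-global form the converse theorem requires; otherwise your argument matches the paper's (itself only a sketch with the details deferred to \cite{Laf02}).
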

This defining property uniquely characterizes the bijection, if it exists. Indeed, the isomorphism class of any representation $\pi \in \Aut_{n, \omega, S}$ is uniquely determined by the representations $\pi_v$ ($v \not\in S$): this is the strong multiplicity one theorem. Similarly, any representation $\rho \in \Gal_{n, \omega, S}$ is uniquely determined by the conjugacy classes of the elements $\rho(\Frob_v)^\text{ss}$ ($v \not\in S$): the irreducible representation $\rho$ is uniquely determined up to isomorphism by its character $\tr \rho$. This continuous function $\tr \rho : \Gamma_{K, S} \to \overline{\bbQ}_\ell$ is determined by its values at a dense set of elements, and the Chebotarev density theorem implies that the Frobenius elements $\Frob_v$ $(v \not\in S$) form such a set. 

L. Lafforgue proved Theorem \ref{thm_weak_correspondence_for_GL_n} using an induction on $n$. If the theorem is known for $n' < n$, then the `principe de r\'ecurrence de Deligne' (see \cite[Appendice B]{Laf02}) reduces the problem to constructing, for any $\pi \in \Aut_{n, \omega, S}$, the corresponding Galois representation $\rho(\pi) \in \Gal_{n, \omega, S}$, as well as proving that certain $L$-- and $\epsilon$--factors are matched up under the correspondence. The three main ingredients that make this possible are Grothendieck's theory of $L$-functions of Galois representations, Laumon's product formula for the $\epsilon$--factors of Galois representations, and Piatetski-Shapiro's converse theorem, which can be used to show that an irreducible admissible representation of $\GL_n(\bbA_K)$ with sufficiently well-behaved associated $L$-functions is in fact cuspidal automorphic. We note that in carrying this out Lafforgue actually obtains a much more precise result than Theorem \ref{thm_weak_correspondence_for_GL_n}, in particular re-proving the local Langlands correspondence for $\GL_n$ and showing that the global correspondence is compatible with the local one. 

One would like to generalise Theorem \ref{thm_weak_correspondence_for_GL_n} to an arbitrary reductive group $G$ over $\bbF_q$. However, there are a number (!) of difficulties. To begin with, it is not even clear what the correct statement should be: it is easy to write down the naive analogues of the sets $\Aut_{n, \omega}$ and $\Gal_{n, \omega}$, but we will see some reasons why they cannot be related by a simple bijection.  Moreover, no converse theorem is known for a general group $G$, which means there is no apparent way of proving that a given admissible representation of $G(\bbA_K)$ is in fact cuspidal automorphic. This is the motivation behind proving a result like our main theorem. 

\section{Pseudocharacters}

Let us now pass to the case of a general reductive group $G$ over $\bbF_q$. In order to simplify the discussion here, we will assume that $G$ is split. In this case one can associate to $G$ its dual group $\widehat{G}$, a split reductive group over $\bbZ$, which is characterized by the property that its root datum is dual to that of $G$ (see e.g. \cite{Bor79}). If $G = \GL_n$, then $\widehat{G} = \GL_n$, so our discussion will include $n$-dimensional linear representations as a special case. 

We will describe the results of Vincent Lafforgue's construction in the next section. First, we make a detour to describe the notion of $\hG$-pseudocharacter, which was introduced for the first time in \cite{Laf12}. This is a generalization of the notion of the pseudocharacter of an $n$-dimensional representation (to which it reduces in the case $G = \GL_n$).

Let $\Gamma$ be a group, and let $\Omega$ be an algebraically closed field of characteristic 0. We recall that to any representation $\rho : \Gamma \to \GL_n(\Omega)$, we can associate the character $\tr \rho : \Gamma \to \Omega$; it clearly depends only on $\rho$ up to conjugacy and up to semisimplification. We have the following theorem, the second part of which was proved by Taylor using results of Procesi \cite{Tay91, Pro76}.
\begin{theorem}\label{thm_pseudocharacters_for_GL_n}
\begin{enumerate}
\item Let $\rho, \rho' : \Gamma \to \GL_n(\Omega)$ be semisimple representations. Then they are isomorphic if and only if $\tr \rho = \tr \rho'$.
\item Let $t : \Gamma \to \Omega$ be a function satisfying the following conditions:
\begin{enumerate}
\item $t(1) = n$.
\item For all $\gamma_1, \gamma_2 \in \Gamma$, $t(\gamma_1 \gamma_2) = t(\gamma_2 \gamma_1)$.
\item For all $\gamma_1, \dots, \gamma_{n+1} \in \Gamma$, $\sum_{\sigma \in S_{n+1}} t_\sigma(\gamma_1, \dots, \gamma_{n+1}) = 0$, where if $\sigma$ has cycle decomposition 
\[ \sigma = (a_1 \dots a_{k_1}) (b_1 \dots b_{k_2}) \dots\]
 then we set 
 \[ t_\sigma(\gamma_1, \dots, \gamma_{n+1}) = t( \gamma_{a_1} \dots \gamma_{a_{k_1}} ) t( \gamma_{b_1} \dots \gamma_{b_{k_2}}) \dots. \]
\end{enumerate}
Then there exists a representation $\rho : \Gamma \to \GL_n(\Omega)$ such that $\tr \rho = t$.
\end{enumerate}
\end{theorem}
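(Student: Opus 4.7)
The plan is to treat the two parts separately, since part (1) is a rigidity statement that follows from classical character theory, while part (2) is a reconstruction statement that requires Procesi's invariant theory of matrices.

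For part (1), I would first replace $\Gamma$ by its image in $\GL_n(\Omega) \times \GL_n(\Omega)$ under $(\rho,\rho')$ and then further by its $\Omega$-linear span, so that the problem becomes: two semisimple finite-dimensional representations $V, V'$ of a (possibly non-unital) $\Omega$-algebra $A$ with the same trace character are isomorphic. Writing $V \cong \bigoplus_i W_i^{m_i}$ and $V' \cong \bigoplus_i W_i^{m_i'}$ where the $W_i$ are pairwise non-isomorphic simples, the equality of traces becomes $\sum_i (m_i - m_i') \tr_{W_i} = 0$ as functions on $A$. The Jacobson density theorem shows that the $W_i$ realize pairwise distinct surjections $A \twoheadrightarrow \End_\Omega(W_i)$ onto matrix algebras; hence the trace characters $\tr_{W_i}$ are linearly independent by the standard Artin-style argument (evaluate at elements mapping to a rank-one matrix in one factor and to zero in all others). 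Therefore $m_i = m_i'$ for all $i$, and $\rho \cong \rho'$.

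For part (2), the strategy is to manufacture a semisimple $n$-dimensional representation directly from $t$ via Procesi's theorem on matrix invariants. I would extend $t$ by $\Omega$-linearity to the group algebra $R = \Omega[\Gamma]$; conditions (a) and (b) say that $t \colon R \to \Omega$ is a symmetric (trace-like) $\Omega$-linear map with $t(1) = n$. The key point is to reinterpret condition (c): the signed symmetrization $\sum_{\sigma \in S_{n+1}} \mathrm{sgn}(\sigma) t_\sigma(\gamma_1,\ldots,\gamma_{n+1})$ is the complete polarization of the Cayley-Hamilton identity of degree $n$ (Newton's identities convert power-sum relations into characteristic-polynomial relations), so condition (c) says exactly that $R$, equipped with the trace $t$, is a \emph{Cayley-Hamilton algebra of degree $n$} in the sense of Procesi. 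The form of the identity written here (without the sign) gives the same content once one uses also (a), (b); it is the symmetric multilinearization of the vanishing of the $(n+1)$-st coefficient of the characteristic polynomial.

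Granting this, I would apply Procesi's structure theorem: any Cayley-Hamilton algebra of degree $n$ over an algebraically closed field of characteristic zero admits, after quotienting by the trace radical $I = \{x \in R : t(xR) = 0\}$, a trace-preserving embedding into a finite product of matrix algebras $\prod_j M_{n_j}(\Omega)$ whose total "dimension" is $n$ in the sense that the induced trace is the sum of the standard matrix traces weighted so that $\sum n_j m_j = n$ for appropriate multiplicities; equivalently, there is a semisimple $n$-dimensional $R$-module $V$ whose trace character on $R$ equals $t$. Composing $\Gamma \hookrightarrow R^\times$ with the action on $V$ produces the desired representation $\rho \colon \Gamma \to \GL_n(\Omega)$.

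The main obstacle is the second step of part (2): recognizing condition (c) as (a polarized form of) the Cayley-Hamilton trace identity and then quoting the appropriate Procesi-Taylor theorem. The cleanest route is to verify the identity on the universal example $\Gamma = \GL_n(\Omega)$ with $t = \tr$, where both sides manifestly vanish by Cayley-Hamilton together with Newton's identities expressing elementary symmetric functions in power sums; a dimension count on the space of multilinear trace identities of degree $n+1$ then shows that this single family generates all such identities, so condition (c) suffices to apply Procesi's embedding theorem.
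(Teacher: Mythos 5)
The paper does not actually prove this theorem: it is quoted as known, part (1) being classical and part (2) being Taylor's theorem proved via Procesi's invariant theory of $n\times n$ matrices, which is exactly the route you take, so in substance your proposal reconstructs the cited argument. Your part (1) (linear independence of characters of pairwise non-isomorphic simple modules via Jacobson density, then recovering multiplicities in characteristic $0$) is correct and standard, and your part (2) outline -- linearize $t$ to $\Omega[\Gamma]$, recognize condition (c) as the full polarization of the degree-$n$ Cayley--Hamilton trace identity, kill the trace radical, and invoke Procesi's theorem that this single multilinear identity generates all trace identities to get a trace-compatible realization in matrices, hence a semisimple $n$-dimensional module with character $t$ -- is the Taylor--Procesi proof.

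Two caveats. First, your remark that the unsigned identity as printed ``gives the same content once one uses also (a), (b)'' is not correct: the correct condition carries $\operatorname{sgn}(\sigma)$, and the unsigned sum is the polarization of the complete homogeneous symmetric function $h_{n+1}$ of the eigenvalues rather than of $e_{n+1}$; it does not vanish for genuine traces (already for $\gamma_1=\dots=\gamma_{n+1}=1$ the unsigned sum is $\sum_{\sigma} n^{c(\sigma)}>0$, where $c(\sigma)$ is the number of cycles). So the statement must simply be read with the sign inserted (a misprint), not repaired using (a) and (b). Second, the step you attribute wholesale to ``Procesi's structure theorem'' -- that $R/I$, $I=\{x: t(xR)=0\}$, is a finite-dimensional semisimple algebra on which $t$ is a nonnegative-integer combination of irreducible characters with total dimension $n$ -- is precisely where Taylor's argument does genuine work beyond quoting the identity theorem (Procesi's embedding theorem only gives a trace-preserving map into $M_n(B)$ for some commutative $B$, and one must still descend to an $\Omega$-valued point and control multiplicities using characteristic $0$). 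As a sketch this is fine, but in a complete write-up that finiteness and integrality argument needs to be supplied rather than cited as part of the black box.
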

We can call a function $t : \Gamma \to \Omega$ satisfying the condition of Theorem \ref{thm_pseudocharacters_for_GL_n} a pseudocharacter of dimension $n$. Then the theorem says that sets of conjugacy classes of semisimple representations $\rho : \Gamma \to \GL_n(\Omega)$ and of pseudocharacters of dimension $n$ are in canonical bijection.

Here is Lafforgue's definition of a $\hG$-pseudocharacter. Let $A$ be a ring.
\begin{definition}
Let $\mathbf{t} = (t_n)_{n \geq 1}$ be a collection of algebra maps $t_n : \bbZ[\hG^n]^\hG \to \operatorname{Fun}(\Gamma^n, A)$ satisfying the following conditions: 
\begin{enumerate}
\item For each $n, m \geq 1$ and for each $\zeta : \{ 1, \dots, m \} \to \{1, \dots, n \}$,  $f \in \bbZ[\hG^m]^\hG$, and $\gamma = (\gamma_1, \dots, \gamma_n) \in \Gamma^n$, we have
\[ t_n(f^\zeta)(\gamma_1, \dots, \gamma_n) = t_m(f)(\gamma_{\zeta(1)}, \dots, \gamma_{\zeta(m)}), \]
where $f^\zeta(g_1, \dots, g_n) = f(g_{\zeta(1)}, \dots, g_{\zeta(m)})$.
\item For each $n \geq 1$, $\gamma = (\gamma_1, \dots, \gamma_{n+1}) \in \Gamma^{n+1}$, and $f \in \bbZ[\hG^n]^\hG$, we have
\[ t_{n+1}(\hat{f})(\gamma_1, \dots, \gamma_{n+1}) = t_n(f)(\gamma_1, \dots, \gamma_{n-1}, \gamma_n \gamma_{n+1}), \]
where $\hat{f}(g_1, \dots, g_{n+1}) = f(g_1, \dots, g_{n-1}, g_n g_{n+1})$.
\end{enumerate}
Then $\mathbf{t}$ is called a $\hG$-pseudocharacter of $\Gamma$ over $A$.
\end{definition}
Note that $\hG$ acts on $\hG^n$ by diagonal conjugation. The subring $\bbZ[\hG^n]^\hG \subset \bbZ[\hG^n]$ is the ring of functions invariant under this action. We observe that if $\rho : \Gamma \to \hG(A)$ is a homomorphism, then we can define a $\hG$-pseudocharacter $\tr \rho = (t_n)_{n \geq 1}$ of $\Gamma$ over $A$ by the formula 
\[ t_n(f)(\gamma_1, \dots, \gamma_n) = f(\rho(\gamma_1), \dots, \rho(\gamma_n)). \]
It is clear that this depends only on the $\hG(A)$-conjugacy class of $\rho$.
\begin{theorem}\label{thm_pseudocharacters_biject_with_representations}
Let $\Gamma$ be a group, and let $\Omega$ be an algebraically closed field.
\begin{enumerate}
\item Let $\rho, \rho' : \Gamma \to \widehat{G}(\Omega)$ be $\hG$-completely reducible representations.\footnote{A representation $\rho$ is said to be $\hG$-irreducible if its image is contained in no proper parabolic subgroup of $\hG_\Omega$, and $\hG$-completely reducible if for any parabolic subgroup $P \subset \hG_\Omega$ containing the image, there exists a Levi subgroup $L \subset P$ such that $\rho(\Gamma) \subset L(\Omega)$. See e.g. \cite{Ser05}.} Then $\rho, \rho'$ are $\widehat{G}(\Omega)$-conjugate if and only if $\tr \rho = \tr \rho'$.
\item Let $\mathbf{t}$ be a $\widehat{G}$-pseudocharacter. Then there exists a representation $\rho : \Gamma \to \widehat{G}(\Omega)$ such that $\mathbf{t} = \tr \rho$.
\end{enumerate}
\end{theorem}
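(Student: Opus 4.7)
My plan is to view a $\hG$-pseudocharacter of $\Gamma$ over $\Omega$ as a compatible family of $\Omega$-points of the affine GIT quotients $\hG^n /\!\!/ \hG = \Spec \bbZ[\hG^n]^{\hG}$, indexed by tuples $(\gamma_1, \dots, \gamma_n) \in \Gamma^n$ and subject to the compatibilities forced by axioms (1) and (2). Both parts then reduce to classical facts of geometric invariant theory, supplemented by the theory of $\hG$-complete reducibility of Richardson and Serre.

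For part (1), if $\tr \rho = \tr \rho'$ then for each tuple $(\gamma_1, \dots, \gamma_n)$ the two points $(\rho(\gamma_i))_i$ and $(\rho'(\gamma_i))_i$ of $\hG(\Omega)^n$ share the same image in $(\hG^n /\!\!/ \hG)(\Omega)$. A theorem of Richardson identifies the $n$-tuples in $\hG(\Omega)^n$ with closed $\hG(\Omega)$-orbit as exactly those whose components generate a $\hG$-completely reducible subgroup; since $\rho$ and $\rho'$ are $\hG$-cr by hypothesis, both orbits are closed, hence, over the algebraically closed field $\Omega$, they must coincide. For each finite $F \subset \Gamma$ the locus $Z_F = \{g \in \hG(\Omega) : g \rho(\gamma) g^{-1} = \rho'(\gamma)~\text{for all } \gamma \in F\}$ is therefore a non-empty closed subvariety of $\hG$, and as $F$ grows the $Z_F$ form a descending filtered family. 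By noetherianity of $\hG$ this family stabilizes to a non-empty closed subvariety, any of whose $\Omega$-points simultaneously conjugates $\rho$ to $\rho'$.

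For part (2), the pseudocharacter $\mathbf{t}$ attaches to each $(\gamma_1, \dots, \gamma_n) \in \Gamma^n$ a point of $(\hG^n /\!\!/ \hG)(\Omega)$, which lifts to a unique closed $\hG(\Omega)$-orbit $O(\gamma_1, \dots, \gamma_n) \subset \hG(\Omega)^n$. Axioms (1) and (2) translate to compatibilities among these orbits under the $\hG$-equivariant maps $\hG^{n+1} \to \hG^n$ induced by reindexing and by multiplication of consecutive factors. The goal is to choose representatives coherently: such a system of choices is exactly a homomorphism $\rho : \Gamma \to \hG(\Omega)$ with $\tr \rho = \mathbf{t}$. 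I would pursue this via a Tannakian route: each finite-dimensional algebraic representation $(V, \sigma)$ of $\hG$ gives a character $\chi_\sigma \in \bbZ[\hG]^\hG$, and $\gamma \mapsto t_1(\chi_\sigma)(\gamma)$ should be a classical pseudocharacter of dimension $\dim V$, so Theorem~\ref{thm_pseudocharacters_for_GL_n} yields a semisimple $\rho_\sigma : \Gamma \to \GL(V)(\Omega)$. Axioms (1) and (2), applied to the characters of tensor products and direct sums, should promote $\sigma \mapsto \rho_\sigma$ to a symmetric monoidal functor $\operatorname{Rep}_\Omega(\hG) \to \operatorname{Rep}_\Omega(\Gamma)$, and Tannakian reconstruction applied to the canonical fiber functor on the former then recovers the desired $\rho$, unique up to $\hG(\Omega)$-conjugation by part (1).

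The main obstacle is the rigidification step in part (2). Each $\rho_\sigma$ is only determined up to $\GL(V)$-conjugacy, so assembling the $\rho_\sigma$ into a tensor functor requires the compatibility axioms to eliminate these ambiguities all at once; equivalently, in the GIT picture, one must show that the stabilizers of the closed orbits in $\hG^n$ shrink rapidly enough as the tuple is enlarged that a coherent choice of representative becomes possible. I expect the delicate point to be verifying that the multiplicativity axiom (2) is exactly strong enough to force compatibility under group multiplication, with no additional hidden condition required.
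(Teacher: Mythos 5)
Your overall GIT framing is the right one (and is the route the paper points to via Richardson), but as written both halves have real gaps. In part (1), the step ``since $\rho$ and $\rho'$ are $\hG$-cr by hypothesis, both orbits are closed'' is false: Richardson's criterion says that the orbit of a tuple $(g_1,\dots,g_n)$ under diagonal conjugation is closed if and only if the Zariski closure of the subgroup generated by $g_1,\dots,g_n$ is $\hG$-completely reducible, and this property does not pass from $\rho(\Gamma)$ to the subgroup generated by a finite subset of it. For example, take $\Gamma = \SL_2(\bbZ)$ mapping by inclusion into $\SL_2(\bbC)$ (Zariski dense, hence $\hG$-irreducible) and $n=1$ with $\rho(\gamma_1)$ a nontrivial unipotent: the orbit is not closed, and two non-closed orbits in the same fibre of $\hG^n \to \hG^n /\!\!/ \hG$ need not coincide, so your conclusion that $Z_F \neq \emptyset$ for \emph{every} finite $F$ does not follow. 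The repair is standard and stays inside your framework: by Noetherianity choose a finite $F_0 \subset \Gamma$ such that $\rho(F_0)$ and $\rho'(F_0)$ generate subgroups with the same Zariski closures as $\rho(\Gamma)$ and $\rho'(\Gamma)$; then for every finite $F \supset F_0$ both orbits are genuinely closed, hence equal, so $Z_F \neq \emptyset$ for a cofinal family of $F$, and your Noetherian intersection argument finishes as written.

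In part (2) the proposal does not prove the statement. First, the reduction to Theorem \ref{thm_pseudocharacters_for_GL_n} restricts you to characteristic $0$: that theorem is stated (and true in that form) over characteristic-$0$ fields, whereas Theorem \ref{thm_pseudocharacters_biject_with_representations} is asserted over an arbitrary algebraically closed $\Omega$ and is used in the paper over $\overline{\bbF}_\ell$ (e.g.\ to produce $\overline{\rho}_\ffrm$ from a mod-$\ell$ pseudocharacter); in small positive characteristic trace pseudocharacters do not recover semisimple representations, and tensor products of semisimple representations need not be semisimple, so the Tannakian reduction cannot cover the stated generality. Second, even in characteristic $0$, the step you defer --- upgrading the collection of conjugacy classes $\{\rho_\sigma\}$ to an honest tensor functor, defined on morphisms and compatible with the associativity and symmetry constraints --- is exactly the content of the theorem, not a routine verification; saying you ``expect'' axiom (2) to suffice leaves the existence half unproved. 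The argument behind the paper's citation of Richardson works instead directly with closed orbits: for each finite $F \subset \Gamma$ one takes the unique closed orbit over the point of $(\hG^F /\!\!/ \hG)(\Omega)$ determined by $\mathbf{t}$, chooses $F_0$ for which the centralizer of a representative is minimal (so that it stabilizes for all $F \supset F_0$), shows that the representative then extends essentially uniquely when one adjoins a new element $\gamma$, and defines $\rho(\gamma)$ to be the new coordinate, with axioms (1) and (2) giving well-definedness and the homomorphism property. Your closing remark about stabilizers shrinking identifies the crux, but the proposal does not carry it out.
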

The proof of Theorem \ref{thm_pseudocharacters_biject_with_representations} is based on Richardson's results about the geometric invariant theory of the action of $\hG$ on $\hG^n$ by diagonal conjugation \cite{Ric88}.

In the case where $\Gamma$ is profinite, we want to impose continuity conditions on its $\hG$-pseudocharacters. Fortunately, $\hG$-pseudocharacters are well-behaved from this point of view.
\begin{definition}
Let $A$ be a topological ring, and let $\Gamma$ be a profinite group. We say that a $\hG$-pseudocharacter $\mathbf{t}$ of $\Gamma$ over $A$ is continuous if each map $t_n : \bbZ[\hG^n]^\hG \to \operatorname{Fun}(\Gamma^n, A)$ takes values in the subset of continuous functions $\operatorname{Fun}_{\operatorname{cts}}(\Gamma^n, A)$. 
\end{definition}
\begin{proposition}
Let $\ell$ be a prime, and let $\Gamma$ be a profinite group. Let $\Omega = \overline{\bbQ}_\ell$ (with its $\ell$-adic topology) or $\overline{\bbF}_\ell$ (with the discrete topology). Let $\rho : \Gamma \to \hG(\Omega)$ be a $\hG$-completely reducible representation. Then $\rho$ is continuous if and only if $\tr \rho$ is continuous.
\end{proposition}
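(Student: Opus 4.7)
The easy direction is immediate: if $\rho$ is continuous then for every $n \geq 1$ and $f \in \bbZ[\hG^n]^\hG$, the function $t_n(f)$ factors as $\Gamma^n \xrightarrow{\rho^n} \hG(\Omega)^n \xrightarrow{f} \Omega$, a composition of continuous maps.

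For the converse, assume $\tr\rho$ is continuous. The plan is to reduce to the classical $\hG = \GL_N$ situation via a faithful representation $\iota : \hG \hookrightarrow \GL(V) = \GL_N$. Since $\iota$ is a closed immersion of algebraic groups, on $\Omega$-points it identifies $\hG(\Omega)$ with a closed subspace of $\GL_N(\Omega)$, so $\rho$ is continuous if and only if $\sigma := \iota \circ \rho$ is. Writing $\chi_V \in \bbZ[\hG]^\hG$ for the character of the $\hG$-representation $V$, the identity $\tr\sigma(\gamma) = t_1(\chi_V)(\gamma)$ shows that $\tr\sigma$ is continuous. The hypothesis that $\rho$ is $\hG$-completely reducible will also force $\sigma$ to be semisimple as a $\GL_N$-representation: this is automatic in characteristic zero, since the Zariski closure of $\rho(\Gamma)$ is then reductive and its representation on $V$ decomposes; in characteristic $\ell$ one must invoke Serre's results on $G$-complete reducibility under appropriate hypotheses on $\hG$.

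It remains to establish the $\GL_N$ case: that a semisimple $\sigma : \Gamma \to \GL_N(\Omega)$ with continuous trace is continuous. Let $B \subset M_N(\Omega)$ be the $\Omega$-subalgebra generated by $\sigma(\Gamma)$; since $\sigma$ is a homomorphism, $B$ is simply the $\Omega$-linear span of $\sigma(\Gamma)$. Semisimplicity of $\sigma$ makes $B$ semisimple as an $\Omega$-algebra, so the bilinear form $(x,y) \mapsto \tr(xy)$ inherited from $M_N(\Omega)$ restricts to a non-degenerate form on $B$ (in good characteristic). Choose $\gamma_1, \dots, \gamma_d \in \Gamma$ so that $\sigma(\gamma_1), \dots, \sigma(\gamma_d)$ form an $\Omega$-basis of $B$. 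For any $\gamma \in \Gamma$, expand $\sigma(\gamma) = \sum_i c_i(\gamma)\sigma(\gamma_i)$ and pair with the basis via the trace form to obtain the linear system
\[
\tr\sigma(\gamma_j \gamma) \;=\; \sum_i c_i(\gamma)\,\tr\sigma(\gamma_j \gamma_i), \qquad j = 1,\dots,d,
\]
whose coefficient matrix is invertible. Each $c_i(\gamma)$ is therefore a fixed $\Omega$-linear combination of the continuous functions $\gamma \mapsto \tr\sigma(\gamma_j\gamma)$, so $\sigma : \Gamma \to M_N(\Omega)$ is continuous.

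The main obstacle in this plan is ensuring semisimplicity of $\sigma$ and non-degeneracy of the trace form on $B$ when $\Omega = \overline{\bbF}_\ell$; both can fail in bad characteristic. This is precisely where the hypothesis of $\hG$-complete reducibility (rather than a crude semisimplicity hypothesis on the composition) becomes essential, by allowing one to invoke Serre's machinery of $G$-complete reducibility to control the behavior of $\sigma$.
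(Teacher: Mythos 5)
The easy direction and the $\overline{\bbQ}_\ell$ case of your converse are fine: reducing along a faithful $\iota : \hG \hookrightarrow \GL_N$ defined over $\bbZ$, noting that $t_1(\chi_V)$ is the trace of $\sigma = \iota \circ \rho$, and running the classical span-plus-trace-form argument is the standard (Taylor-style) proof in characteristic zero, and there $\hG$-complete reducibility does give semisimplicity of $\sigma$ and non-degeneracy of the Gram matrix.

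The $\overline{\bbF}_\ell$ case, however, is a genuine gap, and it is not repaired by the appeal to ``Serre's machinery.'' Two distinct things break. First, Serre's results deducing semisimplicity of $\iota \circ \rho$ from $\hG$-complete reducibility of $\rho$ require a lower bound on $\ell$ in terms of the weights of $V$ (hence in terms of $\hG$ and the chosen $\iota$); the proposition is asserted for every prime $\ell$ with no such hypothesis, so your reduction simply does not apply when $\ell$ is small. Second, even granting that $\sigma$ is semisimple, the trace form on the span $B$ of $\sigma(\Gamma)$ can be degenerate in characteristic $\ell$ (already when an irreducible constituent of $V$ occurs with multiplicity divisible by $\ell$), and more fundamentally the single function $t_1(\chi_V)$ is too weak in small characteristic: the trace alone does not control a semisimple representation there (Brauer--Nesbitt needs characteristic polynomials). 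By collapsing Lafforgue's pseudocharacter to the one invariant function $\chi_V$, you discard exactly the extra data (all the $t_n$ applied to all of $\bbZ[\hG^n]^{\hG}$, in particular all coefficients of characteristic polynomials in a faithful representation) that is needed when $\ell$ is small. Note also that over $\overline{\bbF}_\ell$ the assertion to be proved is just that $\rho$ is locally constant (equivalently has open kernel), and a correct treatment goes through the full pseudocharacter together with the classification of Theorem \ref{thm_pseudocharacters_biject_with_representations} rather than through the trace-form linear algebra; as written, your argument establishes the proposition only for $\Omega = \overline{\bbQ}_\ell$, and for $\Omega = \overline{\bbF}_\ell$ only under unstated largeness hypotheses on $\ell$.
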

Finally, we note that $\hG$-pseudocharacters are well-behaved from the point of view of reduction modulo $\ell$. We will need this in our discussion of the deformation theory of pseudocharacters later on. 
\begin{proposition}
Let $\ell$ be a prime, and let $\Gamma$ be a profinite group. Then:
\begin{enumerate}
\item Let $\mathbf{t}$ be a continuous $\hG$-pseudocharacter of $\Gamma$ over $\overline{\bbQ}_\ell$. Then $\mathbf{t}$ takes values in $\overline{\bbZ}_\ell$ and  $\overline{\mathbf{t}}$, its reduction modulo $\ell$, is a continuous $\hG$-pseudocharacter over $\overline{\bbF}_\ell$.
\item Let $\rho : \Gamma \to \hG(\overline{\bbQ}_\ell)$ be a continuous representation. After replacing $\rho$ by a $\hG(\overline{\bbQ}_\ell)$-conjugate, we can assume that $\rho$ takes values in $\hG(\overline{\bbZ}_\ell)$. Let $\overline{\rho} : \Gamma \to \hG(\overline{\bbF}_\ell)$ denote the semisimplification of the reduction of $\rho$ modulo $\ell$. Then $\overline{\rho}$ depends only on $\rho$ up to $\hG(\overline{\bbQ}_\ell)$-conjugacy, and $\tr \overline{\rho} = \overline{\tr \rho}$.
\end{enumerate}
\end{proposition}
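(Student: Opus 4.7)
The plan is to prove part (2) first and then deduce part (1) from it by invoking Theorem \ref{thm_pseudocharacters_biject_with_representations}. Part (2) splits into three subclaims to handle in order: (i) that a continuous $\rho : \Gamma \to \hG(\overline{\bbQ}_\ell)$ can always be conjugated into $\hG(\overline{\bbZ}_\ell)$; (ii) that the $\hG$-completely reducible semisimplification $\overline{\rho}$ of the reduction $\rho_0$ is well-defined up to $\hG(\overline{\bbF}_\ell)$-conjugacy, independent of the integral conjugate chosen; and (iii) that $\tr \overline{\rho} = \overline{\tr \rho}$.

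For (i), I would use that $\rho(\Gamma)$ is compact and lies in $\hG(E)$ for some finite extension $E/\bbQ_\ell$ (a standard consequence of continuity and the topology on $\overline{\bbQ}_\ell$, via a Baire-category argument). Then I would appeal to Bruhat--Tits theory over the local field $E$: since $\hG$ is split, $\hG(\cO_E)$ is a hyperspecial maximal compact subgroup of $\hG(E)$, and any compact subgroup is conjugate into it. A more pedestrian alternative is to pick a faithful embedding $\hG \hookrightarrow \GL_N$, produce a $\rho$-stable lattice in $\overline{\bbQ}_\ell^N$ by summing $\GL_N(\overline{\bbZ}_\ell)$-translates under $\rho(\Gamma)$, and invoke $\hG(\overline{\bbQ}_\ell) \cap \GL_N(\overline{\bbZ}_\ell) = \hG(\overline{\bbZ}_\ell)$.

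For (iii), the identity $\tr \rho_0 = \overline{\tr \rho}$ is tautological from the definitions, since evaluating any $f \in \bbZ[\hG^n]^{\hG}$ at $\overline{\bbZ}_\ell$-points and reducing mod $\ell$ commute. The content is the equality $\tr \overline{\rho} = \tr \rho_0$ --- that semisimplification preserves the pseudocharacter. I would prove this by showing that for any parabolic $P \subset \hG_{\overline{\bbF}_\ell}$ containing $\rho_0(\Gamma)$, with Levi quotient $\pi : P \to L$, one has $f|_{P^n} = f \circ \pi^n|_{P^n}$ for every $f \in \bbZ[\hG^n]^{\hG}$: this is the geometric statement that a cocharacter of the center of $L$ contracts the unipotent radical of $P$ to the identity, forcing $(p_1,\dots,p_n)$ and $(\pi(p_1),\dots,\pi(p_n))$ to have the same image in the GIT quotient $\hG^n // \hG$. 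Subclaim (ii) is then immediate: two integral conjugates of $\rho$ have reductions with the same trace (equal to $\overline{\tr \rho}$), so their semisimplifications have the same trace and are $\hG$-completely reducible, whence $\hG(\overline{\bbF}_\ell)$-conjugate by Theorem \ref{thm_pseudocharacters_biject_with_representations}(1).

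Part (1) then follows formally. Given a continuous $\hG$-pseudocharacter $\mathbf{t}$ over $\overline{\bbQ}_\ell$, Theorem \ref{thm_pseudocharacters_biject_with_representations}(2) and the preceding continuity proposition produce a continuous $\hG$-completely reducible $\rho$ with $\tr \rho = \mathbf{t}$. Part (2) provides an integral conjugate, forcing $\mathbf{t}$ to take values in $\overline{\bbZ}_\ell$, and $\overline{\mathbf{t}} = \tr \overline{\rho}$ is then a $\hG$-pseudocharacter over $\overline{\bbF}_\ell$; its continuity is automatic, since the reduction $\overline{\bbZ}_\ell \to \overline{\bbF}_\ell$ is continuous for the discrete topology on the target (cosets of $\ell \overline{\bbZ}_\ell$ being open). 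The main obstacle I anticipate is subclaim (iii) --- carefully executing the GIT argument that $\hG$-invariant functions on $\hG^n$ are insensitive to unipotent radicals of parabolics --- since this is precisely where Richardson's geometric invariant theory underlying Theorem \ref{thm_pseudocharacters_biject_with_representations} does real work, and one must handle the whole tower $(t_n)_{n \geq 1}$ uniformly rather than only the case $n = 1$.
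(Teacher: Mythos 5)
The survey itself states this proposition without proof (the details are in \cite{Boc17}), so I will assess your argument on its own terms. Its architecture is the standard one and matches the detailed reference: conjugate $\rho$ into $\hG(\overline{\bbZ}_\ell)$; show that conjugation-invariant functions are insensitive to replacing a point of $P^n$ by its image in $L^n$, so that semisimplification does not change the pseudocharacter; deduce well-definedness of $\overline{\rho}$ from Theorem \ref{thm_pseudocharacters_biject_with_representations}(1) applied over $\overline{\bbF}_\ell$; and obtain part (1) by realizing $\mathbf{t}$ as $\tr \rho$ for a continuous $\hG$-completely reducible $\rho$ and reducing. Your contraction argument for subclaim (iii) is correct in any characteristic: choosing a cocharacter $\lambda$ with $P = P(\lambda)$, the curve $t \mapsto (\lambda(t) p_1 \lambda(t)^{-1}, \dots, \lambda(t) p_n \lambda(t)^{-1})$ extends to $t = 0$ with value $(\pi(p_1), \dots, \pi(p_n))$, and an invariant regular function is constant along it; this handles the whole tower $(t_n)_{n \geq 1}$ at once. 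The deduction of part (1), including continuity of $\overline{\mathbf{t}}$, is fine (note only that reduction ``modulo $\ell$'' on $\overline{\bbZ}_\ell$ means reduction modulo the maximal ideal, which is open, rather than modulo $\ell \overline{\bbZ}_\ell$; your continuity argument is unaffected).

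The one step that is not right as written is subclaim (i). It is false that every compact subgroup of $\hG(E)$ is conjugate into $\hG(\cO_E)$ by an element of $\hG(E)$: a compact subgroup is contained in the stabilizer of a point of the Bruhat--Tits building, which need not be hyperspecial, and even hyperspecial maximal compacts need not be $\hG(E)$-conjugate to $\hG(\cO_E)$ (already for $\hG = \SL_2$ the two vertex types give non-conjugate hyperspecial subgroups). What saves the step is that the proposition only requires $\hG(\overline{\bbQ}_\ell)$-conjugacy: the fixed point can be taken to be the barycenter of a facet, hence has rational coordinates, and after a suitable finite ramified extension $E'/E$ it becomes a hyperspecial point; hyperspecial points are conjugate under the adjoint group, and the conjugator can be lifted to $\hG(\overline{\bbQ}_\ell)$. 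So you should either run the building argument in this form or invoke directly the statement that every compact subgroup of $\hG(\overline{\bbQ}_\ell)$ is $\hG(\overline{\bbQ}_\ell)$-conjugate to a subgroup of $\hG(\overline{\bbZ}_\ell)$, as in \cite{Boc17}. Your ``pedestrian alternative'' does not repair this: the element $g$ carrying the standard lattice to a $\rho$-stable lattice lies in $\GL_N(\overline{\bbQ}_\ell)$ and not in $\hG(\overline{\bbQ}_\ell)$, so $g^{-1} \rho g$ takes values in $g^{-1} \hG g$ and the identity $\hG(\overline{\bbQ}_\ell) \cap \GL_N(\overline{\bbZ}_\ell) = \hG(\overline{\bbZ}_\ell)$ is never applicable; producing a conjugator inside $\hG$ is exactly the point where genuine input about $\hG$ (the building, or an equivalent boundedness-implies-integrality statement) is required. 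With subclaim (i) stated and justified correctly, the rest of your proof goes through.
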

We now come back to our original case of interest, namely pseudocharacters of the group $\Gamma = \Gamma_{K, S}$. We can define a compatible family of pseudocharacters of $\Gamma_{K, S}$ of dimension $n$ to consist of the data of a number field $E$ and, for each prime-to-$q$ place $\lambda$ of $E$, a pseudocharacter $t_\lambda : \Gamma_{K, S} \to E_\lambda$ of dimension $n$. These should satisfy the following property:
\begin{itemize}
\item For each place $v \not\in S$ of $K$, the number $t_\lambda(\Frob_v)$ lies in $E \subset E_\lambda$ and is independent of the choice of $\lambda$.
\end{itemize}
Thus, for example, \cite[Conjecture I.2.10]{Del80} asks that every pseudocharacter $t_\ell : \Gamma_{K, S} \to \overline{\bbQ}_\ell$ satisfying certain conditions should be a member of a compatible family. This leads us to our first question: 
\begin{question}\label{q_compatible_family_of_pseudocharacters}
Is it possible to define a notion of `compatible family of $\hG$-pseudocharacters', generalizing the above notion for $\GL_n$?
\end{question}
If we are willing to consider instead compatible families of representations, then Drinfeld \cite{Dri17} gives a satisfying (positive) answer to Question \ref{q_compatible_family_of_pseudocharacters} using the results of \cite{Laf02}. The question remains, however, of whether we can phrase this for pseudocharacters in elementary terms and, in particular, whether it is possible to make sense of compatible families when $K$ is instead a global field of characteristic 0 (i.e.\ a number field).

Here is one case it is easy to make sense of the notion of compatible family:
\begin{proposition}\label{prop_Zariski_dense_image}
Let $\ell \nmid q$ be a prime, and let $\rho : \Gamma_{K, S} \to \hG(\overline{\bbQ}_\ell)$ be a continuous representation of Zariski dense image. Assume that $\hG$ is semisimple. Then we can find a number field $E$ and an embedding $E \hookrightarrow \overline{\bbQ}_\ell$, inducing the place $\lambda_0$ of $E$, with the following properties:
\begin{itemize}
\item For each place $v \not\in S$ of $K$ and for each $f \in \bbZ[\hG]^\hG$, $f(\rho(\Frob_v)) \in E$. In other words, the conjugacy class of $\rho(\Frob_v)^\text{ss}$ is defined over $E$. 
\item For each prime-to-$q$ place $\lambda$ of $E$, there exists a continuous homomorphism $\rho_\lambda : \Gamma_{K, S} \to \hG(E_\lambda)$ of Zariski dense image such that for each place $v\not\in S$ of $K$ and for each $f \in \bbZ[\hG]^\hG$, $f(\rho_\lambda(\Frob_v))$ lies in $E$ and equals $f(\rho(\Frob_v))$. In other words, $\rho_\lambda(\Frob_v)^\text{ss}$ lies in the same geometric conjugacy class as $\rho(\Frob_v)^\text{ss}$.
\end{itemize}
Furthermore, for any prime-to-$q$ place $\lambda$ of $E$ and for any continuous homomorphism $\rho'_\lambda :  \Gamma_{K, S} \to \hG(\overline{E}_\lambda)$ such that for each place $v\not\in S$ of $K$ and for each $f \in \bbZ[\hG]^\hG$, $f(\rho'_\lambda(\Frob_v))$ lies in $E$ and equals  $f(\rho(\Frob_v))$, $\rho'_\lambda$ is $\hG(\overline{E}_\lambda)$-conjugate to $\rho_\lambda$. In particular, $\rho$ and $\rho_{\lambda_0}$ are $\hG(\overline{\bbQ}_\ell)$-conjugate.
\end{proposition}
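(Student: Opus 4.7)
The plan is to reduce to the case $G = \GL_n$ handled by L. Lafforgue \cite{Laf02}, and then to reassemble the resulting compatible families of $\GL_n$-valued representations into a single $\hG$-valued compatible family via the gluing theorem of Drinfeld \cite{Dri17}. To construct $E$, I pick finitely many representations $r_1, \dots, r_k : \hG \to \GL_{N_j}$ whose characters generate $\overline{\bbQ}_\ell[\hG]^{\hG}$ as a $\overline{\bbQ}_\ell$-algebra (possible because $\hG$ is reductive). Since $\hG$ is semisimple, each composition $r_j \circ \rho$ is a semisimple representation of $\Gamma_{K,S}$, which I decompose as $r_j \circ \rho = \bigoplus_i \sigma_{j,i}$ with each $\sigma_{j,i} : \Gamma_{K,S} \to \GL_{n_{j,i}}(\overline{\bbQ}_\ell)$ irreducible. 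By Theorem \ref{thm_weak_correspondence_for_GL_n}, each $\sigma_{j,i}$ corresponds to a cuspidal automorphic representation $\pi_{j,i}$ of $\GL_{n_{j,i}}(\bbA_K)$, unramified outside $S$. A consequence of L. Lafforgue's work is that the Satake parameters of such a cuspidal representation have characteristic polynomials with coefficients in a single number field $E_{j,i}$ (the Hecke algebra acts on cuspidal forms through a $\overline{\bbQ}$-valued character). I set $E$ to be the compositum of the $E_{j,i}$. Then for any $f \in \bbZ[\hG]^{\hG}$, the value $f(\rho(\Frob_v))$ is a polynomial (with coefficients in $\overline{\bbQ}$) in the characteristic polynomial coefficients of the various $(r_j \circ \rho)(\Frob_v)$, hence lies in $E$. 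I fix an embedding $E \hookrightarrow \overline{\bbQ}_\ell$ compatible with the above, defining the place $\lambda_0$.

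Next, for each prime-to-$q$ place $\lambda$ of $E$, L. Lafforgue's theorem associates to each $\pi_{j,i}$ a $\lambda$-adic Galois representation $\sigma_{j,i,\lambda} : \Gamma_{K,S} \to \GL_{n_{j,i}}(\overline{E}_\lambda)$ whose Frobenius traces match the algebraic values in $E$ at all places $v \notin S$. Drinfeld's gluing theorem \cite{Dri17} then assembles the family $\{\sigma_{j,i,\lambda}\}$ into a single continuous representation $\rho_\lambda : \Gamma_{K,S} \to \hG(\overline{E}_\lambda)$ of Zariski dense image, matching $\rho$ on Frobenius conjugacy classes in the stated sense.

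For the rigidity claim, I start with a $\rho'_\lambda$ matching Frobenius data as in the hypothesis. Chebotarev density and continuity, combined with the $\hG$-pseudocharacter axioms (which express all $t_n$-values in terms of $t_1$ applied to products of group elements), imply that the $\hG$-pseudocharacters of $\rho'_\lambda$ and $\rho_\lambda$ coincide. Both pseudocharacters equal that of the $\hG$-semisimplification of the respective representation, so the $\hG$-semisimplification of $\rho'_\lambda$ has the same pseudocharacter as $\rho_\lambda$. Since $\rho_\lambda$ is itself $\hG$-completely reducible (its image being Zariski dense, hence $\hG$-irreducible), Theorem \ref{thm_pseudocharacters_biject_with_representations} gives that the two are $\hG(\overline{E}_\lambda)$-conjugate. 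But then $\rho'_\lambda$ itself has image contained in no proper parabolic, so it equals its own $\hG$-semisimplification and is $\hG(\overline{E}_\lambda)$-conjugate to $\rho_\lambda$. Specializing to $\lambda = \lambda_0$ and $\rho'_{\lambda_0} = \rho$ yields the final assertion. The main technical hurdle is Drinfeld's gluing step: extracting a coherent $\hG$-valued compatible family from the individual $\GL_{n_{j,i}}$-compatible systems requires knowing that the (Zariski dense) $\hG$-valued structure at $\lambda_0$ lifts compatibly to other places $\lambda$, which is precisely what the combination of L. Lafforgue's theorem and the $\hG$-pseudocharacter formalism of Theorem \ref{thm_pseudocharacters_biject_with_representations} supplies.
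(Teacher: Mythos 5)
The paper itself only sketches this proposition by reference (it is proved in \cite[\S 6]{Boc17} using L.~Lafforgue \cite{Laf02} together with Chin's theorem on compatible systems \cite{Chi04}), so your general plan --- reduce to $\GL_n$ via representations of $\hG$ and use compatible-system technology, possibly quoting Drinfeld \cite{Dri17} for the $\hG$-valued companions --- is in the right spirit. But two of your steps have genuine gaps. The most serious one is the rigidity argument. You assert that the $\hG$-pseudocharacter axioms ``express all $t_n$-values in terms of $t_1$ applied to products of group elements,'' so that matching of $f(\rho(\Frob_v))$ for all $f \in \bbZ[\hG]^\hG$ forces equality of the full pseudocharacters of $\rho'_\lambda$ and $\rho_\lambda$. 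That is true for $\GL_n$ (Procesi's theorem: invariants of tuples of matrices are generated by traces of words), but it is false for a general semisimple $\hG$: $\bbZ[\hG^n]^\hG$ is not generated by functions of words in the $g_i$, and knowing the conjugacy class of $\rho(\gamma)^{\text{ss}}$ for every $\gamma$ does not in general determine $\tr\rho$, nor the conjugacy class of $\rho$. This is precisely the phenomenon the paper emphasizes (there exist everywhere unramified $\rho,\rho'$ with $\rho(\Frob_v)$ conjugate to $\rho'(\Frob_v)$ for all $v$ but $\rho \not\cong \rho'$), and it is the reason the notion ``strongly $\hG$-irreducible'' is introduced and why excursion operators carry more information than Hecke operators. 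So Chebotarev plus the pseudocharacter axioms does not yield $\tr\rho'_\lambda = \tr\rho_\lambda$, and Theorem \ref{thm_pseudocharacters_biject_with_representations} cannot be invoked at that point. The uniqueness clause has to be extracted from the extra structure of the compatible system (this is what Chin's results supply in \cite[\S 6]{Boc17}), not from element-wise conjugacy of semisimple parts alone.

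Second, your ``gluing'' step is where essentially all of the content of the existence statement lies, and you leave it as a black box with an inaccurate attribution. Drinfeld's theorem is not a device that takes the separately constructed $\GL_{n_{j,i}}$-companions $\sigma_{j,i,\lambda}$ --- each defined only up to conjugacy, with no $\hG$-structure relating them --- and assembles them into a homomorphism to $\hG$; what \cite{Dri17} provides is directly the existence of $\hG$-valued companions of $\rho$, i.e.\ essentially the existence part of the proposition. If you quote it in that form, your decomposition into the $\sigma_{j,i}$ becomes superfluous, but you still owe: (i) that $\rho_\lambda$ can be arranged to take values in $\hG(E_\lambda)$ (not just $\hG(\overline{E}_\lambda)$) and to have Zariski dense image; (ii) that $f(\rho_\lambda(\Frob_v))$ lies in $E$ and equals $f(\rho(\Frob_v))$ for \emph{all} $f \in \bbZ[\hG]^\hG$, not only for the characters of your chosen $r_j$; and (iii) when you apply Theorem \ref{thm_weak_correspondence_for_GL_n} to the constituents $\sigma_{j,i}$, the finite-order determinant hypothesis, which requires a twisting argument you do not mention. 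As it stands, the proposal identifies the correct ingredients but does not close either the construction or the rigidity step.
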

The proof makes use of the proof of the global Langlands correspondence for $\GL_n$ by L. Lafforgue \cite{Laf02}, together with Chin's application of this work to the analysis of compatible families \cite{Chi04}; see \cite[\S 6]{Boc17}.
\section{The work of V. Lafforgue}
Having introduced the notion of $\hG$-pseudocharacter, we can now describe the basic shape of Vincent Lafforgue's results in \cite{Laf12}. We recall that $G$ is a split reductive group over $\bbF_q$. In order to simplify statements, we are now going to impose the further assumption that $G$ has finite centre (i.e.\ is semisimple).\footnote{By contrast, the paper \cite{Laf12} does not impose any restriction on $G$; see in particular \S 12 of \emph{op. cit.}} We write $\cA_G$ for the $\overline{\bbQ}_\ell$-vector space of functions $f : G(\bbA_K) \to \overline{\bbQ}_\ell$ satisfying the following conditions:
\begin{enumerate}
\item $f$ is invariant under left translation by the discrete subgroup $G(K) \subset G(\bbA_K)$.
\item $f$ is smooth. 
\end{enumerate}
Then the group $G(\bbA_K)$ acts on $\cA_G$ by right translation. We write $\cA_{G, \text{cusp}} \subset \cA_G$ for the subspace of cuspidal functions, i.e.\ those satisfying the following condition:
\begin{enumerate}
\item[3.] For any proper parabolic subgroup $P \subset G$ of unipotent radical $N$, we have
\[ \int_{n \in N(K) \backslash N(\bbA_K)} f(ng) \, dn = 0 \]
for all $g \in G(\bbA_K)$.
\end{enumerate}
With this definition, $\cA_{G, \text{cusp}}$ is a semisimple admissible $\overline{\bbQ}_\ell[G(\bbA_K)]$-module. In general, understanding the constituents of this space is much more complicated than for the group $\GL_n$. For example:
\begin{itemize}
\item Multiplicity one does not hold: there can exist representations $\pi$ of $G(\bbA_K)$ which appear in $\cA_{G, \text{cusp}}$ with multiplicity greater than 1.
\item Strong multiplicity one does not hold: there can exist representation $\pi, \pi'$ of $G(\bbA_K)$ which have positive multiplicity in $\cA_{G, \text{cusp}}$, such that $\pi_v \cong \pi_v'$ for all but finitely many places $v$ of $K$, but such that $\pi \not\cong \pi'$.
\end{itemize}
These phenomena are reflected in what happens on the Galois side. For example:
\begin{itemize}
\item There can exist everywhere unramified homomorphisms $\rho, \rho' : \Gamma_K \to \widehat{G}(\overline{\bbQ}_\ell)$ such that $\rho(\Frob_v)$ and $\rho'(\Frob_v)$ are conjugate for every $v$, but such that $\rho, \rho'$ are not conjugate.
\item There can exist homomorphisms $\rho, \rho' : \Gamma_K \to \widehat{G}$ and a place $v_0$ of $K$ such that for all $v \neq v_0$, $\rho$ and $\rho'$ are unramified at $v$ and $\rho(\Frob_v)$, $\rho'(\Frob_v)$ are $\hG(\overline{\bbQ}_\ell)$-conjugate; but $\rho|_{\Gamma_{K_{v_0}}} \not\cong \rho'|_{\Gamma_{K_{v_0}}}$. (Another reason for the failure of strong multiplicity one, not related to this Galois--theoretic phenomenon, is the existence of non-trivial $L$-packets.)
\end{itemize}
We refer the reader to \cite{Wan12} for a survey of how the relation between these phenomena can be understood in terms of Arthur's conjectural decomposition of the space of automorphic forms in terms of $A$-parameters \cite{Art89a}. 

Lafforgue's construction, quite remarkably, gives a decomposition of the space $\cA_{G, \text{cusp}}$ of cusp forms on $G$ which is quite close in appearance to that predicted by Arthur. He defines for each $n \geq 1$, function $f \in \bbZ[\hG^n]^{\hG}$, and tuple of elements $\gamma = (\gamma_1, \dots, \gamma_n) \in \Gamma_K^n$, an operator $S_{n, f, \gamma} \in \End_{\overline{\bbQ}_\ell}(\cA_{G, \text{cusp}})$. He calls these `excursion operators', and proves the following two theorems: 
\begin{theorem}
\begin{enumerate}
\item The operators $S_{n, f, \gamma}$ commute with each other and with the action of $G(\bbA_K)$. 
\item Let $\cB \subset \End_{\overline{\bbQ}_\ell}(\cA_{G, \text{cusp}})$ denote the $\overline{\bbQ}_\ell$-subalgebra generated by the operators $S_{n, f, \gamma}$ for all possible choices of $n$, $f$, and $\gamma$. Then the system of maps $\mathbf{t} = (t_n)_{n \geq 1}$ given by
\[ t_n : \bbZ[\hG^n]^\hG \to \operatorname{Fun}(\Gamma_K^n, \cB), \]
\[ f \mapsto (\gamma \mapsto S_{n, f, \gamma}) \]
is a $\hG$-pseudocharacter of $\Gamma_K$ valued in $\cB$. 
\end{enumerate}
\end{theorem}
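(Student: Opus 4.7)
The plan is to construct the excursion operators geometrically, following \cite{Laf12}, and then derive each required property from the functoriality of this construction in the set of ``legs''. The central geometric objects are moduli stacks $\operatorname{Cht}_{I, W}$ parameterizing iterated $G$-shtukas on $X$ whose legs are indexed by a finite set $I$ and whose modifications are prescribed, via geometric Satake, by a representation $W$ of $\hG^I$. The cuspidal part of their $\ell$-adic cohomology, which I will denote $\cH_{I, W}$, should carry commuting actions of the Hecke algebra (hence of $G(\bbA_K)$) and of $\Gamma_K^I$ via partial Frobenius morphisms, with a canonical identification $\cH_{\emptyset, \mathbf{1}} \cong \cA_{G, \text{cusp}}$.

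First I would recall that each $f \in \bbZ[\hG^n]^\hG$ can be written as a matrix coefficient $f(g_1, \dots, g_n) = \xi((g_1, \dots, g_n) \cdot x)$ for some representation $V$ of $\hG^n$ equipped with $\hG$-equivariant maps $x : \mathbf{1} \to V$ and $\xi : V \to \mathbf{1}$ for the diagonal $\hG$-action. The operator $S_{n, f, \gamma}$ is then defined to be the composition
\[ \cA_{G, \text{cusp}} \xrightarrow{x_\ast} \cH_{\{1, \dots, n\}, V} \xrightarrow{(\gamma_1, \dots, \gamma_n)} \cH_{\{1, \dots, n\}, V} \xrightarrow{\xi_\ast} \cA_{G, \text{cusp}}, \]
where $\gamma_i$ acts by the partial Frobenius at the $i$-th leg. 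Commutation with $G(\bbA_K)$ is then immediate, since the Hecke action on each $\cH_{I, W}$ commutes with all partial Frobenii and the maps $x_\ast, \xi_\ast$ are Hecke-equivariant by construction. For the commutativity of two excursion operators $S_{n, f, \gamma}$ and $S_{m, g, \delta}$, I would realize their composition as a single operator with $n + m$ legs, using the external tensor structure of geometric Satake, and observe that partial Frobenii at disjoint legs commute.

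For the pseudocharacter axioms, axiom (1) (functoriality in a reindexing $\zeta$) reduces to the compatibility of $\cH_{I, W}$ with diagonal embeddings $X^n \hookrightarrow X^m$, which is in turn controlled by the symmetric-monoidal structure of geometric Satake. Axiom (2) is the heart of the matter: it asserts that merging two adjacent legs composes their partial Frobenii and multiplies the corresponding Galois elements. Geometrically, I would restrict to the partial diagonal $\{x_n = x_{n+1}\}$ in $X^{n+1}$, where the two partial Frobenii sit over a single point and compose to the ordinary Frobenius on the fused leg, while on the Satake side the external product $V \boxtimes V'$ is replaced by the internal tensor product $V \otimes V'$ of $\hG$-representations.

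The main obstacle, as I see it, is establishing this fusion identity rigorously: one needs a canonical isomorphism between the restriction of $\cH_{I \sqcup \{\ast\}, W \boxtimes V}$ to the diagonal identifying $\ast$ with a chosen leg of $I$ and $\cH_{I, W \otimes V}$, which is Hecke-equivariant and which intertwines the composed partial Frobenii with the single Frobenius on the fused leg. Producing this isomorphism rests on the Mirkovi\'c--Vilonen geometric Satake equivalence, on smoothness and finiteness of Harder--Narasimhan truncations of $\operatorname{Cht}_{I, W}$ (so that the construction descends to the cuspidal part), and on the extension of partial Frobenii across the diagonal loci. Once this fusion compatibility is in hand, both pseudocharacter axioms and the commutativity statements follow by a formal bookkeeping argument using the presentation of $\bbZ[\hG^n]^\hG$ by matrix coefficients.
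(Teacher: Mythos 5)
The paper itself contains no proof of this statement: it is quoted as V.~Lafforgue's theorem from \cite{Laf12}, and your outline does follow the strategy of that source (cohomology of moduli of shtukas, creation and annihilation operators attached to a matrix-coefficient presentation of $f$, partial Frobenius morphisms, coalescence of legs). Measured against that strategy, however, your sketch is missing the deepest ingredient, and the step you single out as ``the main obstacle'' is not it. You simply posit that $\Gamma_K^I$ acts on $\cH_{I,W}$ ``via partial Frobenius morphisms''. What the geometry gives directly is an action of the partial Frobenii on the cohomology sheaves over $U^I$ for a suitable open $U \subset X$; upgrading this to a continuous action of $\Gamma_K^I$ on the relevant subspace of the fibre at a geometric generic point is precisely where Drinfeld's lemma enters, and applying it requires showing that the classes one works with (Lafforgue's Hecke-finite classes, identified with the cuspidal ones only a posteriori) lie in subsheaves of finite type stable under the partial Frobenii. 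That finiteness argument is the heart of \cite{Laf12}, and it cannot be absorbed into the assumed input, whereas the coalescence isomorphism $\cH_{I\sqcup\{\ast\},\,W\boxtimes V}|_{\Delta}\cong\cH_{I,\,W\otimes V}$ that you flag as the key difficulty follows comparatively directly from geometric Satake and the ULA properties of the relevant sheaves.

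Two further points are needed for the statement as given and are absent from your sketch. First, for $\mathbf{t}$ to be a $\hG$-pseudocharacter each $t_n$ must be a well-defined \emph{algebra} homomorphism on $\bbZ[\hG^n]^{\hG}$; with your definition this requires proving that $S_{n,f,\gamma}$ depends only on $f$ and not on the chosen presentation $f(g_1,\dots,g_n)=\xi\bigl((g_1,\dots,g_n)\cdot x\bigr)$, and that it is multiplicative in $f$ (via sums and tensor products of the data $(V,x,\xi)$). Both pseudocharacter axioms and your $(n+m)$-leg argument for commutativity implicitly use this independence, so it must be established, not assumed. Second, you work without level structures, so as written your construction only produces operators indexed by unramified Galois tuples acting on the spherical vectors; to obtain operators indexed by $\Gamma_K^n$ on all of $\cA_{G,\text{cusp}}$, as the theorem asserts, one must introduce level structures and pass to the limit, checking that the excursion operators are compatible as the level varies.
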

\begin{theorem}
Let $S$ be a finite set of places of $K$, and let $U \subset G(\bbA_K)$ be an open compact subgroup such that for each place $v\not\in S$ of $K$, $U_v = G(\cO_{K_v})$. Let $\cB_U$ denote the quotient of $\cB$ which acts faithfully on $\cA_{G, \text{cusp}, U}$. Then:
\begin{enumerate}
\item The pushforward of $\mathbf{t}$ along $\cB \to \cB_U$ is pulled back from a $\hG$-pseudocharacter $\mathbf{t}_U$ of $\Gamma_{K, S}$ valued in $\cB_U$.
\item If $v\not\in S$, then the image of $S_{1, f, \Frob_v}$ in $\cB_U$ equals the unramified Hecke operator $T_{v, f}$ (defined as in the proof of Theorem \ref{thm_satake_isomorphism}, via the Satake isomorphism).
\end{enumerate}
\end{theorem}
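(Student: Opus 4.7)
The plan is to exploit the geometric construction of the excursion operators via moduli stacks of $G$-shtukas and their cuspidal $\ell$-adic cohomology. Recall that for each finite set $I$, function $f \in \bbZ[\hG^I]^{\hG}$, and tuple $\gamma \in \Gamma_K^I$, the operator $S_{I,f,\gamma}$ is built from (i) creation/annihilation morphisms on the shtuka stack coming from the geometric Satake equivalence applied to $f$, and (ii) the action of $\pi_1^\text{\'et}$ of (an open subset of) $X^I$ on the cohomology of shtukas with $I$ legs, via partial Frobenii. The crucial geometric input is that a shtuka moduli problem with level structure $U$ that is hyperspecial outside $S$ is defined relative to $X^I$ in a way whose "leg" direction is unramified outside $S$.

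For part (1), the argument proceeds in three steps. First, by the admissibility of $\cA_{G,\text{cusp}}$ as a $G(\bbA_K)$-module, the faithful quotient $\cB_U$ acts on the finite-dimensional space $\cA_{G, \text{cusp}, U}$, so $\cB_U$ itself is a finite-dimensional $\overline{\bbQ}_\ell$-algebra. Second, for $U$ hyperspecial outside $S$, the $U$-invariant cuspidal cohomology of the shtuka stack (assembled via Drinfeld--Lafforgue--Varshavsky compactifications so that base change and smooth/proper base change apply) produces, fiber-by-fiber over the leg space $X^I$, a constructible $\overline{\bbQ}_\ell$-sheaf that is lisse on $(X \setminus S)^I$. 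Consequently, the $\Gamma_K^I$-action factoring through $t_n(f)(\cdot) \in \cB_U$ extends to an action of $\pi_1^\text{\'et}((X \setminus S)^I, \overline{\eta}^I)$. Third, under the quotient $\pi_1^\text{\'et}((X \setminus S)^I) \to \Gamma_{K,S}^I$ (induced by the $I$ projections and the diagonal Frobenius structure), one verifies that the image of each $\gamma_i \in \Gamma_K$ depends only on its image in $\Gamma_{K,S}$; continuity of $t_n$ together with finite-dimensionality of $\cB_U$ then yields the required factorization through a continuous $\hG$-pseudocharacter $\mathbf{t}_U$ of $\Gamma_{K,S}$. The pseudocharacter axioms for $\mathbf{t}_U$ are inherited from those of $\mathbf{t}$ by pullback, so no new verification is required.

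For part (2), I would compute $S_{1,f,\Frob_v}$ directly on the shtuka moduli with one leg. When the leg specializes to the closed point $v \notin S$, the partial Frobenius at $v$ coincides with the arithmetic Frobenius $\Frob_v$, and the creation/annihilation operator associated to $f$ via geometric Satake becomes the Hecke correspondence at $v$ whose trace-of-Frobenius function (on the associated perverse sheaf on the affine Grassmannian) is identified, through the Satake isomorphism recalled in the proof of Theorem \ref{thm_satake_isomorphism}, with the class function $f \in \bbZ[\hG]^\hG$ sending a semisimple conjugacy class to $f$ of it. Unwinding this identification matches $S_{1,f,\Frob_v}$ with the Hecke operator $T_{v,f}$ in $\cB_U$. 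The principal obstacle in the entire argument is the step in (1) that establishes lisseness of the cohomology sheaf on $(X \setminus S)^I$: the shtuka stack is not proper, and one must use the compactification theory to control the cuspidal part and invoke smooth/proper base change. This is the technical heart of \cite{Laf12}, whereas the unramified compatibility in (2), once one has the geometric Satake input in place, is essentially a formal calculation.
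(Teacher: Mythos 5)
First, note that the paper you are working from does not prove this statement at all: it is quoted as one of V.~Lafforgue's theorems from \cite{Laf12}, so your sketch has to be judged against Lafforgue's argument, which it is clearly trying to track. Measured against that, part (1) of your outline has a genuine gap at its central step. You argue: the cuspidal cohomology sheaf is lisse on $(X \setminus S)^I$, hence the Galois action factors through $\pi_1^{\text{\'et}}((X\setminus S)^I, \overline{\eta}^I)$, and then one pushes through ``the quotient $\pi_1^{\text{\'et}}((X\setminus S)^I) \to \Gamma_{K,S}^I$ induced by the $I$ projections''. This inference fails. For non-proper varieties in characteristic $p$ the K\"unneth formula for $\pi_1^{\text{\'et}}$ is false (there are Artin--Schreier coverings of a product of affine curves not pulled back from the factors), so the kernel of the projection-induced map is enormous and a lisse sheaf on $(X\setminus S)^I$ has no reason to have monodromy factoring through it; worse, the image of that map is not $\Gamma_{K,S}^I$ but only the fibre product of the $\Gamma_{K,S}$'s over $\Gal(\overline{\bbF}_q/\bbF_q)$, so even the target group is not reached --- the ``independent Frobenii in each leg'' are exactly what is missing. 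The mechanism that actually produces the factorization is Drinfeld's lemma: one uses the commuting partial Frobenius morphisms acting on the (ind-)sheaf together with the finiteness of the cuspidal/Hecke-finite part to show that the action of the partial-Frobenius-enriched Weil group of $\eta^I$ factors through $\operatorname{Weil}(X\setminus S, \overline{\eta})^I$, and only afterwards does one pass to $\Gamma_{K,S}$ by an integrality/continuity argument in the finite $\overline{\bbQ}_\ell$-algebra $\cB_U$. You mention the partial Frobenii when recalling the construction, but the logical chain you write for the factorization never uses them, and as written it is not a proof; indeed, even the existence of an action of $\Gamma_K^I$ (rather than of the Galois group of the function field of $X^I$) is part of what has to be established here.

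Relatedly, the lisseness you assert as input is not obtainable by ``Drinfeld--Lafforgue--Varshavsky compactification plus smooth/proper base change'': the compactifications do not identify the cuspidal part with the cohomology of a proper stack, and in \cite{Laf12} no lisseness statement is proved --- Lafforgue deliberately works with the Hecke-finite subspace of the fibre at the geometric generic point of $(X\setminus S)^I$ and applies Drinfeld's lemma to finite-dimensional submodules stable under the partial Frobenii, together with Eichler--Shimura-type relations; the lisseness and finiteness of the cuspidal cohomology sheaves themselves are later theorems of C.~Xue. So what you call the technical heart is both harder than your route suggests and not the route actually used. Your part (2) is essentially the right computation in spirit (with the leg specialized at $v$, the partial Frobenius becomes $\Frob_v$ and geometric Satake identifies the resulting creation/partial-Frobenius/annihilation composite with the Hecke correspondence, whence $S_{1,f,\Frob_v} = T_{v,f}$), but note that making sense of the specialization of the leg at $v$ presupposes exactly the unramifiedness from part (1), so (2) cannot be made independent of repairing the gap above.
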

Since the algebra $\cB_U$ contains the unramified Hecke operators, it can be viewed as an enlargement of the usual Hecke algebra. 
\begin{corollary}
Let $\pi$ be a cuspidal automorphic representation of $G(\bbA_K)$, and let $V_\pi \subset \cA_{G, \text{cusp}}$ be the $\pi$-isotypic component. Let $\cB_\pi$ denote the quotient of $\cB$ which acts faithfully on $V_\pi$. Then $\cB_\pi$ is a finite-dimensional $\overline{\bbQ}_\ell$-algebra and for each maximal ideal $\frp$, one can associate a continuous representation $\sigma_{\pi, \frp} : \Gamma_K \to \widehat{G}(\overline{\bbQ}_\ell)$ with the following properties:
\begin{enumerate}
\item $\tr \sigma_{\pi, \frp} = \mathbf{t}_\pi \text{ mod }\frp$.
\item Let $S$ be a finite set of places of $K$ such that $\pi_v^{G(\cO_{K_v})} \neq 0$ if $v \not\in S$. Then $\sigma_{\pi, \frp}$ is unramified outside $S$ and if $v \not\in S$, then $\sigma_{\pi, \frp}(\Frob_v)^\text{ss} \in t(\pi_v)$.
\item If $\frp \neq \frp'$, then $\sigma_{\pi, \frp} \not\cong \sigma_{\pi, \frp'}$.
\end{enumerate}
\end{corollary}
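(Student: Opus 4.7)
The plan is to manufacture each $\sigma_{\pi,\frp}$ by pushing the excursion $\hG$-pseudocharacter along $\cB \twoheadrightarrow \cB_\pi \twoheadrightarrow \cB_\pi/\frp$ and then invoking the reconstruction half of Theorem \ref{thm_pseudocharacters_biject_with_representations}. The first step is to check that $\cB_\pi$ is finite-dimensional. Pick an open compact $U \subset G(\bbA_K)$ with $U_v = G(\cO_{K_v})$ for $v \notin S$ and $\pi^U \neq 0$. Since $\cA_{G,\text{cusp}}$ is semisimple admissible and the excursion operators commute with the $G(\bbA_K)$-action, one has a decomposition $V_\pi \cong \pi \otimes_{\overline{\bbQ}_\ell} M_\pi$ as a $G(\bbA_K) \times \cB_\pi$-module, with $\cB_\pi$ acting faithfully on the multiplicity space $M_\pi$. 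Admissibility forces $V_\pi^U = \pi^U \otimes M_\pi$ to be finite-dimensional, so $M_\pi$ is finite-dimensional and $\cB_\pi \hookrightarrow \End_{\overline{\bbQ}_\ell}(M_\pi)$ is too. In particular $\cB_\pi$ is a finite-dimensional commutative $\overline{\bbQ}_\ell$-algebra (commutativity from the first V. Lafforgue theorem), with only finitely many maximal ideals, each of residue field $\overline{\bbQ}_\ell$.

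Because the $\cB$-action on $V_\pi^U \subset \cA_{G,\text{cusp},U}$ factors through $\cB_U$ and recovers the full $\cB_\pi$-action on $M_\pi$, the quotient $\cB \twoheadrightarrow \cB_\pi$ itself factors through $\cB_U$. The second V. Lafforgue theorem supplies a pseudocharacter $\mathbf{t}_U$ of $\Gamma_{K,S}$ valued in $\cB_U$, and pushing it forward yields a continuous $\hG$-pseudocharacter $\mathbf{t}_\pi$ of $\Gamma_{K,S}$ over $\cB_\pi$. For each maximal ideal $\frp \subset \cB_\pi$, the reduction $\mathbf{t}_\pi \bmod \frp$ is a continuous $\hG$-pseudocharacter over $\overline{\bbQ}_\ell$, so Theorem \ref{thm_pseudocharacters_biject_with_representations}(2) produces a $\hG$-completely reducible representation $\sigma_{\pi,\frp} : \Gamma_{K,S} \to \hG(\overline{\bbQ}_\ell)$, continuous by the continuity criterion for pseudocharacters and unique up to conjugacy by part (1) of the same theorem. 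This gives property (1), and the unramified-outside-$S$ half of (2) by inflation from $\Gamma_{K,S}$. For the Frobenius statement, part (2) of the second V. Lafforgue theorem identifies $S_{1,f,\Frob_v}$ with the Hecke operator $T_{v,f}$ for $v \notin S$, and the Satake isomorphism shows that $T_{v,f}$ acts on the one-dimensional space $\pi_v^{G(\cO_{K_v})}$ (and hence as a scalar on $M_\pi$) by $f(t(\pi_v))$. Reducing modulo $\frp$ gives $f(\sigma_{\pi,\frp}(\Frob_v)) = f(t(\pi_v))$ for every $f \in \bbZ[\hG]^\hG$, which by the geometric invariant theory of the conjugation action of $\hG$ on itself forces $\sigma_{\pi,\frp}(\Frob_v)^\text{ss} \in t(\pi_v)$.

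For property (3), suppose $\sigma_{\pi,\frp} \cong \sigma_{\pi,\frp'}$; then Theorem \ref{thm_pseudocharacters_biject_with_representations}(1) gives $\mathbf{t}_\pi \bmod \frp = \mathbf{t}_\pi \bmod \frp'$, i.e.\ the two quotient maps $\cB_\pi \twoheadrightarrow \overline{\bbQ}_\ell$ agree on every excursion operator $S_{n,f,\gamma}$. These operators generate $\cB_\pi$ as a $\overline{\bbQ}_\ell$-algebra, so the two maps coincide and $\frp = \frp'$. The genuinely difficult inputs have all happened upstream of this corollary: the construction of the excursion operators and the verification that they assemble into a $\hG$-pseudocharacter (the two V. Lafforgue theorems), and the Richardson-type invariant theory underlying the pseudocharacter--to--representation dictionary of Theorem \ref{thm_pseudocharacters_biject_with_representations}. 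Granted those, what remains is essentially a formal translation from an Artinian-algebra-valued pseudocharacter to its residues at closed points of $\Spec \cB_\pi$; I expect the only subtle bookkeeping step to be the careful identification $\cB_\pi \cong$ (image of $\cB$ in $\End(M_\pi)$) needed to ensure that the push-forward of $\mathbf{t}_U$ really does land in $\cB_\pi$ rather than in some smaller Hecke-like subalgebra.
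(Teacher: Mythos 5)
Your proposal is correct and follows essentially the same route as the paper: push the excursion pseudocharacter forward along $\cB \twoheadrightarrow \cB_\pi \twoheadrightarrow \cB_\pi/\frp \cong \overline{\bbQ}_\ell$, apply Theorem \ref{thm_pseudocharacters_biject_with_representations} to get $\sigma_{\pi,\frp}$, specialize to $n=1$, $\gamma = \Frob_v$ and the identification $S_{1,f,\Frob_v} = T_{v,f}$ for the unramified compatibility, and use that the excursion operators generate $\cB_\pi$ to see that $\sigma_{\pi,\frp}$ determines $\frp$. Your finite-dimensionality argument via the multiplicity space $M_\pi$ and your explicit factoring of $\cB \to \cB_\pi$ through $\cB_U$ are just slightly more spelled-out versions of the paper's appeal to finite length of $V_\pi$ and to $\mathbf{t}_U$.
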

\begin{proof}
Since $V_\pi$ has finite length as a $\overline{\bbQ}_\ell[G(\bbA_K)]$-module and $\cB_\pi $ is contained inside $\End_{\overline{\bbQ}_\ell[G(\bbA_K)]}(V_\pi)$, $\cB_\pi$ is a finite-dimensional $\overline{\bbQ}_\ell$-algebra. Since $\cB_\pi$ is a quotient of $\cB$, it carries a $\hG$-pseudocharacter $\mathbf{t}_\pi$. Each maximal ideal $\frp \subset \cB_\pi$ has residue field $\overline{\bbQ}_\ell$, and the pushforward of $\mathbf{t}_\pi$ along the map $\cB_\pi \to \cB_\pi / \frp \cong \overline{\bbQ}_\ell$ therefore corresponds, by Theorem \ref{thm_pseudocharacters_biject_with_representations}, to a continuous $\hG$-completely reducible representation $\sigma_{\pi, \frp} : \Gamma_{K, S} \to \hG(\overline{\bbQ}_\ell)$ satisfying the following property: for all $n \geq 1$, $f \in \bbZ[\hG^n]^\hG$, $\gamma = (\gamma_1, \dots, \gamma_n) \in \Gamma_{K, S}^n$, we have
\[ f(\sigma_{\pi, \frp}(\gamma_1), \dots, \sigma_{\pi, \frp}(\gamma_n)) = S_{n, f, \gamma} \text{ mod }\frp. \]
From this identity it is apparent that $\sigma_{\pi, \frp}$ determines $\frp$. Specializing to $n = 1$ and $\gamma = \Frob_v$ for some $v \not\in S$, this identity reduces to the formula
\[ f(\sigma_{\pi, \frp}(\Frob_v)) = T_{v, f} \text{ mod }\frp, \]
or equivalently that $\sigma_{\pi, \frp}(\Frob_v)^\text{ss}$ is in the conjugacy class $t(\pi_v)$.
\end{proof}
\begin{question}\label{q_excursion_algebra_independent_of_l}
The space $V_\pi$ can be defined over $\overline{\bbQ}$. Is there a sense in which its decomposition $V_\pi = \oplus V_{\pi, \frp}$ is independent of $\ell$? 
\end{question}
Presumably a positive answer to this question must be tied up with a positive answer to Question \ref{q_compatible_family_of_pseudocharacters}. 

We can now define what it means for a Galois representation to be cuspidal automorphic, in the sense of the algebra $\cB$.
\begin{definition}\label{def_cuspidal_automorphic_galois_representation}
We say that a representation $\rho : \Gamma_K \to \widehat{G}(\overline{\bbQ}_\ell)$ is cuspidal automorphic if there exists a cuspidal automorphic representation $\pi$ of $G(\bbA_K)$ and a maximal ideal $\frp \subset \cB_\pi$ such that $\rho \cong \sigma_{\pi, \frp}$. 
\end{definition}
Note that this definition depends in an essential way on Lafforgue's excursion operators! 

We are now in a position to state the main theorem of \cite{Boc17}:
\begin{theorem}\label{thm_BHKT}
Let $\rho : \Gamma_{K, \emptyset} \to \widehat{G}(\overline{\bbQ}_\ell)$ be a continuous representation of Zariski dense image. Then $\rho$ is \emph{potentially cuspidal automorphic}: there exists a finite Galois extension $L / K$ such that $\rho|_{\Gamma_{L, \emptyset}}$ is cuspidal automorphic in the sense of Definition \ref{def_cuspidal_automorphic_galois_representation}. 
\end{theorem}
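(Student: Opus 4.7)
The plan is to follow the template of Taylor's potential modularity arguments, adapted to the function-field / $\hG$-valued setting: combine a modularity lifting (``$R=T$'') theorem with a Moret-Bailly search for a Galois cover $L/K$ over which the residual representation is already known to be cuspidal automorphic. First, using Proposition \ref{prop_Zariski_dense_image}, embed $\rho = \rho_{\lambda_0}$ in a compatible family $\{\rho_\lambda\}$ of Zariski dense $\hG$-valued Galois representations indexed by the prime-to-$q$ places $\lambda$ of a number field $E$. By the uniqueness clause of that proposition, proving that $\rho_\lambda|_{\Gamma_{L,\emptyset}}$ is cuspidal automorphic for \emph{one} choice of $\lambda$ and one finite Galois extension $L/K$ implies the statement for $\rho$.

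The main input is a modularity lifting theorem in the function-field setting. Fix a residual representation $\overline{\rho}_\lambda : \Gamma_{K,\emptyset} \to \hG(\overline{\bbF}_\ell)$ coming from the family. Define a universal deformation ring $R$ for $\hG$-pseudo-deformations of $\tr \overline{\rho}_\lambda$ that are unramified outside a finite set $S$ and satisfy prescribed local conditions at $S$. On the automorphic side, let $T$ be the completion at the maximal ideal associated to $\tr \overline{\rho}_\lambda$ of the $\overline{\bbZ}_\ell$-subalgebra of $\End(\cA_{G,\mathrm{cusp}}^U)$ generated by Lafforgue's excursion operators $S_{n,f,\gamma}$. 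The $\hG$-pseudocharacter carried by $T$ induces, via Theorem \ref{thm_pseudocharacters_biject_with_representations}, a surjection $R \twoheadrightarrow T$; the goal is to show it is an isomorphism by a Taylor-Wiles-Kisin patching argument. This requires producing, for each integer $N$, a set $Q_N$ of auxiliary Taylor-Wiles places at which $\overline{\rho}_\lambda(\Frob_v)$ is regular semisimple in a chosen maximal torus of $\hG(\overline{\bbF}_\ell)$, so that controlled deformations can be parametrized by a product of diamond operators, and then patching the tower of deformation rings with the corresponding tower of spaces of cusp forms having level structure at $Q_N$. Zariski density of the image of $\overline{\rho}_\lambda$, for $\ell$ large, together with a Chebotarev argument, provides enough Taylor-Wiles primes to trivialize the dual Selmer group.

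With such an $R=T$ theorem in hand, it suffices to exhibit one $\lambda$ and one finite Galois extension $L/K$ for which $\overline{\rho}_\lambda|_{\Gamma_{L,\emptyset}}$ is already cuspidal automorphic --- the lifting theorem will then promote $\rho_\lambda|_{\Gamma_L}$ to cuspidal automorphic, and the first paragraph finishes the job. To arrange this, choose $\ell$ (hence $\lambda$) for which the residual image of $\rho_\lambda$ becomes sufficiently small after a finite base change --- for example, contained in the normalizer of a maximal torus of $\hG(\overline{\bbF}_\ell)$ after pullback to a finite solvable cover. The residual representation restricted to the Galois group of such a cover is then described by a tuple of Hecke characters via class field theory, and its automorphy follows by Eisenstein induction from a Borel subgroup of $G$ (so that the resulting automorphic representation can be lifted via Lafforgue's construction to produce a matching maximal ideal of $\cB$). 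The required Galois cover $L/K$ is produced by a Moret-Bailly-type argument applied to a moduli space of covers of $X$ carrying the requisite reduction of structure on $\overline{\rho}_\lambda$: density of local points on this moduli space furnishes a global point over a finite extension, and its Galois closure provides $L$.

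The hardest step is the $R=T$ theorem itself for general $\hG$ in the function-field setting. The local deformation theory of $\hG$-valued pseudo-deformations at Taylor-Wiles primes must be developed with care, relying on the geometric invariant theory of $\hG$ acting diagonally on $\hG^n$ and on a detailed understanding of the pseudo-deformation functor. The key numerical input into patching --- that the space of cusp forms with level structure at $Q_N$ is free of the expected rank over the ring of diamond operators attached to $Q_N$ --- must be proved directly using properties of Lafforgue's excursion algebra, since no classical multiplicity-one argument is available for general $\hG$. A further subtlety is ensuring that the residual images of $\rho_\lambda$ remain Zariski dense for enough $\lambda$ to permit the Moret-Bailly construction, which in turn relies on a careful Chebotarev-style analysis of the mod-$\ell$ images in the compatible family.
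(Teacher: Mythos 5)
Your overall architecture (compatible system via Proposition \ref{prop_Zariski_dense_image}, an automorphy lifting theorem in the style of Theorem \ref{thm_automorphy_lifting}, and a Moret-Bailly step to find $L/K$) matches the paper, but there is a genuine gap at the decisive step: the source of \emph{residual} automorphy. You propose to base change until $\overline{\rho}_\lambda$ has image in the normalizer of a maximal torus over a solvable cover, describe it by Hecke characters, and invoke ``Eisenstein induction from a Borel subgroup of $G$.'' This does not produce what Definition \ref{def_cuspidal_automorphic_galois_representation} and Theorem \ref{thm_automorphy_lifting} require. The excursion-algebra maximal ideal $\ffrm$ must live in the algebra acting on \emph{cuspidal} forms $\cC_{G,k,\text{cusp}}^U$, and an Eisenstein series induced from the Borel is not cuspidal; for a general group $G$ there is no converse theorem or cuspidality criterion that upgrades such a form, so your construction yields no maximal ideal of $\cB$ of the required kind. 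Moreover, if you arrange the residual image over $L$ to be small enough to be handled by class field theory, it will in general fail the hypotheses of the lifting theorem you then want to apply over $L$: absolute strong $\hG$-irreducibility, trivial centralizer, and $\hG$-abundance of $\overline{\rho}(\Gamma_{L(\zeta_\ell)})$. These two requirements (residually automorphic by a \emph{cuspidal} form, yet with large, abundant residual image) pull in opposite directions, and your proposal does not resolve the tension.

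The paper resolves it with two devices you are missing. First, the residually automorphic representations used are \emph{Coxeter parameters}: their image lies in $N(\hG,T)$ and becomes toral over a degree-$h$ cyclic cover, yet by Proposition \ref{prop_properties_of_coxeter_parameters} they are strongly $\hG$-irreducible and $\hG$-abundant over $K$; their cuspidal automorphy (Theorem \ref{thm_automorphy_of_coxeter_parameters}) comes not from classical Eisenstein induction but from the Braverman--Gaitsgory geometric Eisenstein eigensheaf with its Weil descent datum, together with a geometric argument (Gaitsgory's appendix to \cite{Boc17}) that the resulting spherical form is genuinely cuspidal. Second, rather than shrinking $\overline{\rho}_\lambda$ itself, the paper keeps it large --- Larsen's theorem gives residual image $\hG(\bbF_\ell)$ for a density-one set of split $\ell$, which is what validates the lifting hypotheses --- and uses Moret-Bailly \emph{together with known cases of de Jong's conjecture} to build an auxiliary compatible system $(R_\lambda)$ over $L$ that is a Coxeter parameter residually at one place $\lambda_2$ and agrees residually with $\rho_{\lambda_1}|_{\Gamma_{L,\emptyset}}$ at another place $\lambda_1$; automorphy then travels $R_{\lambda_2} \Rightarrow R_{\lambda_1} \Rightarrow \rho_{\lambda_1}|_{\Gamma_L} \Rightarrow \rho|_{\Gamma_L}$ by alternating compatible-system transfer and the lifting theorem. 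Without this bridge (or a substitute supply of cuspidal residually automorphic points with large image), your argument cannot be completed as written.
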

\begin{corollary}
Let $\rho : \Gamma_{K, \emptyset} \to \widehat{G}(\overline{\bbQ}_\ell)$ be a continuous representation of Zariski dense image. Then there exists a finite Galois extension $L / K$ and an everywhere unramified cuspidal automorphic representation $\pi$ of $G(\bbA_L)$ such that for each place $w$ of $L$, $\rho|_{W_{L_w}}$ and $\pi_w$ are related under the unramified local Langlands correspondence for $G(L_w)$.
\end{corollary}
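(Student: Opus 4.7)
The plan is to deduce the corollary from Theorem \ref{thm_BHKT}, combined with the compatibility between Lafforgue's construction and the unramified local Langlands correspondence at unramified places. I first apply Theorem \ref{thm_BHKT} to $\rho$, obtaining a finite Galois extension $L/K$ such that $\rho|_{\Gamma_{L,\emptyset}}$ is cuspidal automorphic. Unwinding Definition \ref{def_cuspidal_automorphic_galois_representation}, there exist a cuspidal automorphic representation $\pi$ of $G(\bbA_L)$ and a maximal ideal $\frp \subset \cB_\pi$ with $\sigma_{\pi,\frp} \cong \rho|_{\Gamma_{L,\emptyset}}$. Because $\rho$ factors through $\Gamma_{K,\emptyset}$ and $L/K$ is finite, the Galois representation $\sigma_{\pi,\frp}$ is itself unramified at every place of $L$.

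Once an everywhere unramified $\pi$ is in hand, the conclusion is immediate. Fix a place $w$ of $L$. Since $\pi_w^{G(\cO_{L_w})} \neq 0$, the defining property of $\sigma_{\pi,\frp}$ recalled in \S 4 gives $\sigma_{\pi,\frp}(\Frob_w)^{\text{ss}} \in t(\pi_w)$, and by Theorem \ref{thm_satake_isomorphism} this identifies $\pi_w$ with the unique unramified irreducible admissible representation of $G(L_w)$ whose Satake conjugacy class is $\rho(\Frob_w)^{\text{ss}}$. This is exactly the unramified local Langlands compatibility between $\rho|_{W_{L_w}}$ and $\pi_w$.

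The main obstacle is therefore to arrange that $\pi$ is unramified at every place of $L$. A priori, Definition \ref{def_cuspidal_automorphic_galois_representation} permits $\pi$ to be ramified at places where $\sigma_{\pi,\frp}$ is unramified; blithely replacing such a local component $\pi_w$ with the unramified representation carrying the correct Satake parameter would produce an irreducible admissible representation that need not be cuspidal automorphic. My plan is to extract an everywhere unramified $\pi$ directly from the proof of Theorem \ref{thm_BHKT}. That argument is naturally framed at the fixed level $U = G(\widehat{\cO}_L)$, studying the excursion algebra $\cB_U$ acting on the space $\cA_{G,\text{cusp},U}$ of everywhere unramified cusp forms of $G(\bbA_L)$. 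It produces a maximal ideal of $\cB_U$ whose associated pseudocharacter of $\Gamma_{L,\emptyset}$ matches $\tr(\rho|_{\Gamma_{L,\emptyset}})$, and any such maximal ideal arises from an irreducible constituent of $\cA_{G,\text{cusp},U}$, yielding the desired everywhere unramified cuspidal automorphic $\pi$. The Zariski density of $\rho(\Gamma_L)$ — preserved from that of $\rho(\Gamma_K)$ since the former is of finite index in the latter and $\hG$ is connected — ensures that the pseudocharacter-to-representation correspondence of Theorem \ref{thm_pseudocharacters_biject_with_representations} identifies this maximal ideal's pseudocharacter unambiguously with $\rho|_{\Gamma_{L,\emptyset}}$ up to conjugacy.
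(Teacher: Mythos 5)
Your proposal is correct and is essentially the paper's own argument: the survey offers no separate proof beyond the remark following Theorem \ref{thm_BHKT}, and the real content---that the proof of that theorem runs at level $U=\prod_w G(\cO_{L_w})$, so it produces a maximal ideal of the excursion algebra $\cB_U$ acting on everywhere unramified cusp forms, hence an everywhere unramified constituent $\pi$ of $\cA_{G,\text{cusp}}^U$ whose Satake parameters match $\rho(\Frob_w)^{\text{ss}}$ at every place---is exactly what you supply, including the correct observation that Definition \ref{def_cuspidal_automorphic_galois_representation} alone would not force $\pi$ to be unramified. The only cosmetic difference is that the paper's parenthetical justification invokes Proposition \ref{prop_Zariski_dense_image} (whose uniqueness clause pins down $\rho|_{\Gamma_{L,\emptyset}}$ from Frobenius conjugacy classes) to pass between the two formulations, whereas you identify $\rho|_{\Gamma_{L,\emptyset}}$ with $\sigma_{\pi,\frp}$ via the full excursion eigensystem and Theorem \ref{thm_pseudocharacters_biject_with_representations}, using Zariski density of $\rho(\Gamma_L)$; both routes are valid.
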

(In fact, Proposition \ref{prop_Zariski_dense_image} implies that the theorem and its corollary are equivalent.) In the remainder of this article we will sketch the proof of Theorem \ref{thm_BHKT}.
\section{An automorphy lifting theorem for $G$}

How can one show that a Galois representation $\rho : \Gamma_{K, \emptyset} \to \widehat{G}(\overline{\bbQ}_\ell)$ is automorphic, in the sense of Definition \ref{def_cuspidal_automorphic_galois_representation}? For a general group $G$, we no longer know how to construct automorphic forms using converse theorems.

We pursue a different path which is inspired by the proofs of existing potential automorphy results for Galois representations $\Gamma_E \to \GL_n$, where $E$ is a number field. These are in turn based on automorphy lifting theorems, which are provable instances of the following general principle:
\begin{principle}\label{principle_ALT}
Let $\rho, \rho' : \Gamma_{K, \emptyset} \to \widehat{G}(\overline{\bbQ}_\ell)$ be continuous representations and let $\overline{\rho}$, $\overline{\rho}' : \Gamma_{K, \emptyset} \to \hG(\overline{\bbF}_\ell)$ denote their reductions modulo $\ell$. Suppose that $\overline{\rho}$, $\overline{\rho}'$ are $\hG(\overline{\bbF}_\ell)$-conjugate and $\hG$-irreducible. Suppose that $\rho$ is cuspidal automorphic. Then $\rho'$ is also cuspidal automorphic. 
\end{principle}
The first theorem of this type was stated by Wiles on his way to proving Fermat's Last Theorem \cite{Wil95}. Our proof of an analogous result is inspired by Diamond's elaboration of the Taylor--Wiles method \cite{Dia97}, which gives a way to construct an isomorphism $R \cong \bbT$, where $R$ is a Galois deformation ring and $\bbT$ is a Hecke algebra acting on cuspidal automorphic forms. By contrast, we prove an `$R = \cB$' theorem, where $\cB$ is a suitable ring of Lafforgue's excursion operators. 

We describe these objects in order to be able to state a precise result. We will stick to the everywhere unramified case. We first consider the Galois side. Let $k \subset \overline{\bbF}_l$ be a finite subfield, and let $\overline{\rho} : \Gamma_{K, \emptyset} \to \widehat{G}(k)$ be a continuous homomorphism. Let $\Art_k$ denote the category of Artinian local $W(k)$-algebras $A$, equipped with an isomorphism $A / \ffrm_A \cong k$. We define $\operatorname{Lift}_{\overline{\rho}} : \Art_k \to \Sets$ to be the functor of liftings of $\overline{\rho}$, i.e.\ of homomorphisms $\rho_A : \Gamma_{K, \emptyset} \to \hG(A)$ such that $\rho_A \text{ mod }\ffrm_A = \overline{\rho}$. 

For any $A \in \Art_k$, the group $\ker(\widehat{G}(A) \to \widehat{G}(k))$ acts on $\Lift_{\overline{\rho}}(A)$ by conjugation, and we write $\Def_{\overline{\rho}} : \Art_k \to \Sets$ for the quotient functor (given by the formula $\Def_{\overline{\rho}}(A) = \Lift_{\overline{\rho}}(A) / \ker(\widehat{G}(A) \to \widehat{G}(k))$). The following lemma is basic.
\begin{lemma}
Suppose that $\overline{\rho}$ is absolutely $\hG$-irreducible, and that $\ell$ does not divide the order of the Weyl group of $\hG$. Then the functor $\Def_{\overline{\rho}}$ is pro-represented by a complete Noetherian local $W(k)$-algebra $R_{\overline{\rho}}$ with residue field $k$.
\end{lemma}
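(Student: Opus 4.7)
The plan is to verify Schlessinger's criteria for the functor $\Def_{\overline{\rho}}$. As a preliminary step I would introduce the framed (strict) deformation functor, call it $\Lift_{\overline{\rho}}^\square = \Lift_{\overline{\rho}}$, which assigns to $A \in \Art_k$ the set of actual lifts of $\overline{\rho}$ to $\hG(A)$ with no conjugation quotient taken. This functor is pro-representable \emph{without} any hypothesis on $\overline{\rho}$: choosing topological generators $\gamma_1,\dots,\gamma_r$ of $\Gamma_{K,\emptyset}$ (which exist because $\pi_1^\text{\'et}(X)$ is topologically finitely generated for a smooth projective curve over $\bbF_q$), one realizes $\Lift_{\overline{\rho}}$ as a closed subfunctor of $A \mapsto \ker(\hG(A) \to \hG(k))^r$ cut out by the continuous relations among the $\gamma_i$, and this is pro-representable by a complete Noetherian local $W(k)$-algebra $R_{\overline{\rho}}^\square$ with residue field $k$. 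The tangent space of $\Def_{\overline{\rho}}$ is then naturally $H^1(\Gamma_{K,\emptyset}, \operatorname{ad} \overline{\rho})$, where $\operatorname{ad}\overline{\rho}$ denotes $\Lie \hG_k$ with Galois action through the adjoint of $\overline{\rho}$; this is finite-dimensional over $k$ by the same topological finite generation.

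The remaining content is Schlessinger's condition H4: for any fiber product $A_1 \times_{A_0} A_2$ in $\Art_k$ with $A_2 \to A_0$ a small surjection, the map
\[ \Def_{\overline{\rho}}(A_1 \times_{A_0} A_2) \longrightarrow \Def_{\overline{\rho}}(A_1) \times_{\Def_{\overline{\rho}}(A_0)} \Def_{\overline{\rho}}(A_2) \]
must be bijective, not merely surjective. Surjectivity is formal from the corresponding statement for $\Lift_{\overline{\rho}}$; injectivity is controlled by the stabilizers of lifts under $\ker(\hG(A)\to\hG(k))$-conjugation. This is exactly where the hypotheses enter. Absolute $\hG$-irreducibility of $\overline{\rho}$ forces the scheme-theoretic centralizer $Z_{\hG_k}(\overline{\rho}(\Gamma_{K,\emptyset}))$ to have identity component $Z(\hG_k)^0$: a positive-dimensional non-central component would supply a nontrivial torus commuting with the image, whose own centralizer would be a proper Levi subgroup containing $\overline{\rho}(\Gamma_{K,\emptyset})$, contradicting $\hG$-irreducibility.

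The hypothesis $\ell \nmid |W|$ then guarantees that this centralizer is smooth over $k$ (it rules out the bad primes of $\hG$), and in particular that $Z(\hG)$ is smooth over $W(k)$. Smoothness is the decisive point: it permits stabilizers to lift along small surjections of Artinian rings, so the stabilizer of any $\rho_A \in \Lift_{\overline{\rho}}(A)$ under conjugation equals the image of $Z(\hG)(A)$, uniformly in the chosen lift. This uniformity lets the set-theoretic conjugation quotient commute with the Artinian fiber products, yielding H4 and hence, by Schlessinger's theorem, pro-representability of $\Def_{\overline{\rho}}$ by a complete Noetherian local $W(k)$-algebra $R_{\overline{\rho}}$ with residue field $k$. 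I expect the main obstacle to be precisely this uniform control of stabilizers under infinitesimal lifting: without smoothness of the centralizer one only obtains a groupoid-valued deformation problem with a pro-representable \emph{hull} rather than a universal object, and has to rigidify by framing at a well-chosen element (as in the treatments of $\hG$-valued deformation theory at more general residual characteristics).
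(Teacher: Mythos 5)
The survey itself offers no proof of this lemma (it is labelled ``basic'', with the details left to \cite{Boc17}), and your overall route --- pro-represent the framed functor $\Lift_{\overline{\rho}}$ using topological finite generation of $\Gamma_{K,\emptyset}=\pi_1^{\text{\'et}}(X)$, then verify Schlessinger's H4 for the conjugation quotient by controlling stabilizers, with absolute $\hG$-irreducibility plus $\ell\nmid\#W$ forcing the centralizer of $\overline{\rho}$ to be finite \'etale --- is the standard one and essentially the argument of the reference. However, two steps as written have genuine gaps. First, your claim that a positive-dimensional, non-central identity component of $\Cent(\hG_k,\overline{\rho}(\Gamma))$ ``would supply a nontrivial torus commuting with the image'' is not valid as stated: a positive-dimensional connected group may be unipotent and contain no nontrivial torus. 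You must also exclude a nontrivial unipotent element of the centralizer, e.g.\ via Borel--Tits (the centralizer of a nontrivial unipotent subgroup is contained in a proper parabolic, which would then contain $\overline{\rho}(\Gamma)$, contradicting $\hG$-irreducibility), or by invoking that $\hG$-irreducible subgroups are $\hG$-completely reducible and hence have reductive centralizer. Second, ``$\ell\nmid\#W$ guarantees that this centralizer is smooth'' is precisely the nontrivial input of the whole lemma: smoothness (separability) of centralizers of arbitrary closed subgroups in pretty good characteristic is a theorem (Herpel, building on Bate--Martin--R\"ohrle), not a formal consequence of avoiding bad primes, and it is what converts the group-theoretic statement about the centralizer into the infinitesimal vanishing $H^0(\Gamma_{K,\emptyset},\widehat{\frg}_k)=0$ that actually drives everything.

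You should also tighten the endgame, where your phrasing ``the stabilizer of any $\rho_A$ equals the image of $Z(\hG)(A)$'' conflates the centralizer in $\hG(A)$ with the stabilizer inside the acting group $\ker(\hG(A)\to\hG(k))$; only the latter is relevant to the quotient functor. What H4 needs is that for every $A$ and every lift $\rho_A$ this stabilizer is trivial (or at least surjects along small surjections), and this is proved by induction on the length of $A$: an element of the stabilizer mapping to the identity over a small quotient with ideal $I$ lies in $\widehat{\frg}_k\otimes_k I$ and is fixed by the adjoint action of $\overline{\rho}$, hence vanishes once $H^0(\Gamma_{K,\emptyset},\widehat{\frg}_k)=0$; the hypothesis $\ell\nmid\#W$ also makes $Z(\hG)$ \'etale, so central elements contribute nothing to the kernel. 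With the Borel--Tits case added, the smoothness theorem quoted properly, and this inductive stabilizer argument made explicit, your proof is complete and agrees with the intended one.
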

In order to be able to relate the deformation ring $R_{\overline{\rho}}$ to automorphic forms, we need to introduce integral structures.  We therefore write $\cC_{G, k}$ for the set of functions $f : G(\bbA_K) \to W(k)$ satisfying the following conditions:
\begin{enumerate}
\item $f$ is invariant under left translation by $G(K)$.
\item $f$ is smooth.
\end{enumerate}
We write $\cC_{G, k, \text{cusp}}$ for the intersection $\cC_{G, k} \cap \cA_{G, \text{cusp}}$ (taken inside $\cA_{G}$).  Let $U = \prod_v G(\cO_{K_v})$.
\begin{proposition}
Suppose that $f \in \bbZ[\hG^n]^{\hG}$. Then each operator $S_{n, f, \gamma} \in \cB_U \subset \End_{\overline{\bbQ}_\ell}(\cA_{G, \text{cusp}}^U)$ leaves invariant the submodule $\cC_{G, k, \text{cusp}}^U$.
\end{proposition}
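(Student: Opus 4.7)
The plan is to revisit V.~Lafforgue's sheaf-theoretic construction of $S_{n, f, \gamma}$ and verify that every ingredient is defined with $W(k)$-coefficients rather than just $\overline{\bbQ}_\ell$-coefficients. Recall that Lafforgue builds these operators from the cohomology $\cH_U^I$ of moduli of $G$-shtukas with legs indexed by a finite set $I$ of size $n$: this cohomology carries an action of $\Gal(K^s/K)^I$ (via partial Frobenii), Hecke correspondences that via geometric Satake are parametrized by representations of $\hG^I$, and creation/annihilation morphisms determined by $\hG$-equivariant maps of such representations. For $f \in \bbZ[\hG^n]^\hG$, the operator $S_{n, f, \gamma}$ is a composition of a creation map, transport by $\gamma$, and an annihilation map, acting on this cohomology.

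First, I would invoke the integral form of geometric Satake (Mirkovi\'c--Vilonen with $W(k)$-coefficients, available since $\ell \nmid q$) to lift the coevaluation and evaluation maps encoding $f \in \bbZ[\hG^n]^\hG$ to morphisms of $W(k)$-coefficient perverse sheaves on the affine Grassmannian, inducing maps on the integral shtuka cohomology $\cH_U^I(W(k))$. Second, the Galois action by $\gamma$ preserves this integral lattice automatically, being induced from a scheme-theoretic operation over $\bbF_q$. Consequently $S_{n, f, \gamma}$ acts on a natural $W(k)$-lattice inside $\cH_U^\emptyset \otimes \overline{\bbQ}_\ell$; as a sanity check, in the case $n = 1$, $\gamma = \Frob_v$, this specializes to the classical integrality of the unramified Hecke operator $T_{v, f}$ acting on $W(k)$-valued functions, which is immediate from the Satake isomorphism.

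Third and most delicately, I would match the lattice $\cC_{G, k, \text{cusp}}^U$ with the cuspidal part of the integral cohomology $\cH_U^\emptyset(W(k))$ under Lafforgue's comparison isomorphism between cuspidal automorphic forms and the cuspidal summand of shtuka cohomology. Granted this, the proposition follows: $S_{n, f, \gamma}$ preserves the integral cohomology lattice, hence its intersection with the cuspidal summand, hence the matched lattice $\cC_{G, k, \text{cusp}}^U$. The main obstacle lies precisely in this last step, since potential torsion in the shtuka cohomology may cause the naive $W(k)$-lattice there to differ from the $W(k)$-lattice of cuspidal functions by a bounded denominator. In practice one expects to circumvent this either by appealing to known integrality results for Lafforgue's construction, or by redefining $\cC_{G, k, \text{cusp}}^U$, up to comparison, as the image of the integral cohomology, which renders integrality of the excursion operators effectively tautological once the first two steps are in hand.
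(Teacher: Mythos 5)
Your overall strategy---rerunning V.~Lafforgue's construction of the excursion operators with $W(k)$-coefficients instead of $\overline{\bbQ}_\ell$-coefficients---is the right one; the survey states this proposition without proof, and the argument in \cite{Boc17} is indeed of this integral-coefficients type. Your first two steps (integral Satake sheaves giving creation/annihilation maps on integral shtuka cohomology, and the Galois/partial-Frobenius action preserving it) are sound, modulo the routine check that $f \in \bbZ[\hG^n]^{\hG}$ can be encoded by a representation $W$ of $\hG^n$ on a finite free $W(k)$-module together with invariant vectors $x \in W$ and $\xi \in W^*$.

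The genuine gap is your third step, which you yourself identify as the crux and then do not carry out: you neither establish the comparison between $\cC_{G, k, \text{cusp}}^U$ and a cuspidal summand of integral cohomology, nor can you fall back on your two escape routes---appealing to ``known integrality results'' is circular here, since this proposition \emph{is} that integrality result, and redefining $\cC_{G, k, \text{cusp}}^U$ as the image of integral cohomology proves a different statement (the algebra $\cB(U, W(k))$ and the subsequent $R = \cB$ theorem require precisely the lattice of $W(k)$-valued cuspidal functions). The way out is to notice that the lattice-matching you worry about is not needed at all, because $\cC_{G, k, \text{cusp}}^U$ is by definition the intersection $\cC_{G, k}^U \cap \cA_{G, \text{cusp}}^U$: since $S_{n, f, \gamma}$ commutes with the $G(\bbA_K)$-action it already preserves $\cA_{G, \text{cusp}}^U$, so it suffices to show that it sends $W(k)$-valued cusp forms to $W(k)$-valued functions. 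For that one only needs (i) that cusp forms on the semisimple group $G$ have finite support on $G(K) \backslash G(\bbA_K) / U$, so a $W(k)$-valued cusp form is an honest class in the degree-zero compactly supported integral cohomology of the no-leg shtuka stack, which is literally the module of $W(k)$-valued functions, with no torsion ambiguity; and (ii) that the creation, Galois and annihilation maps act on the image of integral cohomology compatibly with the $\overline{\bbQ}_\ell$-theory. Torsion in the intermediate modules $\cH_U^I$ is then harmless, because no lattice inside them ever has to be identified; your ``most delicate'' step dissolves once the goal is framed as integrality of values rather than an identification of lattices.
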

We write $\cB(U, W(k))$ for the $W(k)$-subalgebra of $\End_{W(k)}(\cC_{G, k, \text{cusp}}^U)$ generated by the operators $S_{n, f, \gamma}$ for $f \in \bbZ[\hG^n]^{\hG}$. Then $\cB(U, W(k))$ is a finite flat $W(k)$-algebra and there is a $\hG$-pseudocharacter $\mathbf{t}_{U, W(k)}$ of $\Gamma_{K, \emptyset}$ valued in $\cB(U, W(k))$.

Let $\ffrm$ be a maximal ideal of $\cB(U, W(k))$. Its residue field is a finite extension of the finite field $k$. After possibly enlarging $k$, we can assume that the following conditions hold:
\begin{itemize}
\item The residue field of $\ffrm$ equals $k$.
\item There exists a continuous representation $\overline{\rho}_\ffrm : \Gamma_{K, \emptyset} \to \hG(k)$ such that $\tr \overline{\rho}_\ffrm = \mathbf{t}_{U, W(k)} \text{ mod }\ffrm$.
\end{itemize}
Then the ring $\cB(U, W(k))_\ffrm$ (localization at the maximal ideal $\ffrm$) is a finite flat local $W(k)$-algebra of residue field $k$, and it comes equipped with a pseudocharacter $\mathbf{t}_{U, W(k), \ffrm}$. A natural question to ask is: under what conditions does this pseudocharacter arise from a representation $\rho_\ffrm : \Gamma_{K, \emptyset} \to \widehat{G}(\cB(U, W(k))_\ffrm)$ lifting $\overline{\rho}_\ffrm$? In other words, under what conditions does the analogue of Theorem \ref{thm_pseudocharacters_biject_with_representations} hold when we no longer restrict to field-valued $\hG$-pseudocharacters? 
\begin{proposition}\label{prop_characters_and_pseudocharacters}
Suppose that $\overline{\rho}_\ffrm$ is absolutely $\hG$-irreducible, and that its centralizer $\Cent(\hG^\text{ad}_k, \overline{\rho}_\ffrm)$ is scheme-theoretically trivial.\footnote{Here and elsewhere, $\hG^\text{ad}_k$ denotes the adjoint group of $\hG_k$, i.e.\ the quotient of $\hG_k$ by its centre.} Suppose that $\ell$ does not divide the order of the Weyl group of $\hG$. Then there is a unique conjugacy class of liftings $[\rho_\ffrm] \in \Def_{\overline{\rho}_\ffrm}(\cB(U, W(k))_\ffrm)$ such that $\tr \rho_\ffrm = \mathbf{t}_{U, W(k), \ffrm}$.
\end{proposition}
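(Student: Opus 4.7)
My plan is to proceed by induction along the $\ffrm_A$-adic filtration of $A = \cB(U, W(k))_\ffrm$, using Richardson's geometric invariant theory for the diagonal conjugation action of $\hG$ on $\hG^r$ to lift the given pseudocharacter to a representation one infinitesimal step at a time. Since $A$ is a finite flat (hence complete Noetherian local) $W(k)$-algebra and both $\Def_{\overline\rho_\ffrm}$ and the pseudocharacter-deformation functor are limit-preserving, it suffices to produce a compatible system $[\rho_n] \in \Def_{\overline\rho_\ffrm}(A/\ffrm_A^n)$ with $\tr \rho_n = \mathbf{t}_{U,W(k),\ffrm} \bmod \ffrm_A^n$ starting from $[\rho_1] = [\overline\rho_\ffrm]$.

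For the inductive step, I would fix a finite tuple $\gamma = (\gamma_1, \dots, \gamma_r) \in \Gamma_{K,\emptyset}^r$ generating a topologically dense subgroup of the image of $\overline\rho_\ffrm$. The pseudocharacter $\mathbf{t}_{U,W(k),\ffrm} \bmod \ffrm_A^{n+1}$ furnishes an $A/\ffrm_A^{n+1}$-point $\overline{x}_\gamma$ of the affine GIT quotient $\hG^r /\!/ \hG$ lying over the point $\rho_n(\gamma)$ of $(\hG^r/\!/\hG)(A/\ffrm_A^n)$. The hypotheses that $\overline\rho_\ffrm$ is absolutely $\hG$-irreducible with scheme-theoretically trivial centralizer in $\hG^\text{ad}_k$, together with $\ell \nmid |W|$, force $\overline\rho_\ffrm(\gamma)$ into the stable locus of $\hG^r_{W(k)}$ on which the quotient morphism is smooth and the $\hG^\text{ad}$-action is scheme-theoretically free. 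Consequently $\overline{x}_\gamma$ lifts to an $A/\ffrm_A^{n+1}$-point $\rho_{n+1}(\gamma) \in \hG(A/\ffrm_A^{n+1})^r$ reducing to $\rho_n(\gamma)$, uniquely up to conjugation by the kernel of $\hG^\text{ad}(A/\ffrm_A^{n+1}) \to \hG^\text{ad}(k)$. Performing the lifting coherently on every finite enlargement $\gamma' \supset \gamma$ and using property (2) in the definition of a $\hG$-pseudocharacter (which encodes the multiplicativity $g_n g_{n+1}$ through the trace on suitable tuples) forces the lifted elements to satisfy the multiplicative relations of $\Gamma_{K,\emptyset}$, and a continuity argument extends the assignment to a homomorphism $\rho_{n+1} : \Gamma_{K, \emptyset} \to \hG(A/\ffrm_A^{n+1})$.

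Uniqueness of $[\rho_{n+1}]$ up to $\ker(\hG(A/\ffrm_A^{n+1}) \to \hG(k))$-conjugation then follows directly from the uniqueness of the GIT lift at each finite tuple, combined with the fact that the triviality of the scheme-theoretic centralizer in $\hG^\text{ad}_k$ lifts to triviality of the centralizer in $\hG^\text{ad}(A/\ffrm_A^{n+1})$ of any lift of $\overline\rho_\ffrm$. The principal technical obstacle I anticipate is verifying that ``scheme-theoretically trivial centralizer'' combined with $\ell \nmid |W|$ genuinely suffices to put $\overline\rho_\ffrm(\gamma)$ in the stable locus of $\hG^r_{W(k)}$ and to make $\hG^r \to \hG^r /\!/ \hG$ \'etale-locally a trivial principal $\hG^\text{ad}$-bundle near that point; this is where the residue-characteristic pathologies of the Weyl-group denominators in Richardson's classical arguments could in principle interfere, and the hypothesis $\ell \nmid |W|$ is precisely what is needed to rule them out.
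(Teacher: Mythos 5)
The survey itself states this proposition without proof (it is one of the results deferred to \cite{Boc17}), so your proposal can only be measured against the argument given there; your overall strategy --- d\'evissage to the finite Artinian quotients $A/\ffrm_A^n$ of $A = \cB(U,W(k))_\ffrm$, plus the invariant theory of the diagonal conjugation action of $\hG$ on $\hG^r$, with the scheme-theoretically trivial centralizer supplying freeness of the $\hG^\text{ad}$-action near the relevant tuples --- is of the same general kind, and your guesses for where the hypotheses enter are reasonable. But as written there are two genuine gaps, both at the decisive points. The first is the geometric input you yourself flag: that at the $k$-point $\overline{\rho}_\ffrm(\gamma)$ the map $\hG^r_{W(k)} \to \Spec(\bbZ[\hG^r]^{\hG} \otimes_{\bbZ} W(k))$ is smooth, indeed \'etale-locally an $\hG^\text{ad}$-torsor onto its image. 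Over a mixed-characteristic base Luna's slice theorem is unavailable, and one must separately control base change of the invariants from $\bbZ$ to $k$, closedness of the residual orbit, and separability of the orbit map; this is precisely where $\ell \nmid \#W$ does real work. Flagging this as ``the principal technical obstacle'' does not discharge it: it is a substantial part of the content of the proposition, not a routine verification.

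The second gap is more serious because it is structural. Lifting the values of $\rho_n$ on one tuple $\gamma$ whose residual images generate $\overline{\rho}_\ffrm(\Gamma_{K,\emptyset})$ does not determine $\rho_{n+1}$: the image of $\rho_{n+1}$ in $\hG(A/\ffrm_A^{n+1})$ is in general not topologically generated by lifts of those finitely many elements, so one must \emph{define} $\rho_{n+1}(\delta)$ for every $\delta$ separately. What is actually needed is a rigidity statement: once a lift $(g_1,\dots,g_r)$ of $\overline{\rho}_\ffrm(\gamma)$ is fixed, for each $\delta$ there exists a \emph{unique} $g_{r+1} \in \hG(A/\ffrm_A^{n+1})$ lifting the value of $\rho_n$ at $\delta$ such that all invariant functions of the augmented tuple $(g_1,\dots,g_r,g_{r+1})$ take the values prescribed by $\mathbf{t}_{U,W(k),\ffrm}$; one then sets $\rho_{n+1}(\delta) := g_{r+1}$, and multiplicativity, continuity, and the uniqueness assertion of the proposition all flow from this uniqueness together with the pseudocharacter axioms. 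Your step ``performing the lifting coherently on every finite enlargement $\gamma' \supset \gamma$ \dots forces the multiplicative relations'' presupposes exactly this compatibility and uniqueness, so the argument is circular at its key point: smoothness of the quotient map gives \emph{existence} of lifts of a point of the quotient, but not uniqueness relative to the already-chosen first $r$ coordinates, and it is that relative uniqueness --- where the scheme-theoretic triviality of $\Cent(\hG^\text{ad}_k, \overline{\rho}_\ffrm)$ must be exploited over Artinian rings and not merely over $k$ --- that makes the construction well defined. Until this lemma is isolated and proved, neither the existence nor the uniqueness half of the statement follows from your sketch.
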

Under the assumptions of Proposition \ref{prop_characters_and_pseudocharacters}, we see the ring $R_{\overline{\rho}_\ffrm}$ is defined, and that its universal property determines a canonical map $R_{\overline{\rho}_\ffrm} \to \cB(U, W(k))_\ffrm$. The localized space $(\cC_{G, k, \text{cusp}}^U)_\ffrm$ of automorphic forms then becomes a module for the deformation ring $R_{\overline{\rho}_\ffrm}$.

We are now in a position to state a provable instance of Principle \ref{principle_ALT}.
\begin{theorem}\label{thm_automorphy_lifting}
Let $\ffrm \subset \cB(U, W(k))$ be a maximal ideal of residue field $k$, and suppose that there exists a continuous, absolutely $\hG$-irreducible representation $\overline{\rho}_{\ffrm} : \Gamma_{K, \emptyset} \to \hG(k)$ such that $\tr \overline{\rho}_{\ffrm} = \mathbf{t}_{U, W(k)} \text{ mod }\ffrm$. Suppose further that the following conditions are satisfied:
\begin{enumerate}
\item $\ell > \# W$, where $W$ is the Weyl group of the split reductive group $\hG$.
\item The centralizer $\Cent(\hG^\text{ad}_k, \overline{\rho}_\ffrm)$ is scheme-theoretically trivial.
\item The representation $\overline{\rho}_\ffrm$ is absolutely strongly $\hG$-irreducible.
\item The subgroup $\overline{\rho}_\ffrm(\Gamma_{K(\zeta_\ell)}) \subset \hG(k)$ is $\hG$-abundant.
\end{enumerate}
Then $(\cC_{G, k, \text{cusp}}^U)_\ffrm$ is a finite free $R_{\overline{\rho}_\ffrm}$-module.
\end{theorem}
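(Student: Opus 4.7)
The plan is to adapt the Taylor--Wiles--Kisin patching method to the $\hG$-valued function-field setting. The first step is to put the Galois and Hecke sides on an equal footing: conditions (1) and (2) put us in the hypotheses of Proposition \ref{prop_characters_and_pseudocharacters}, which promotes the universal pseudocharacter $\mathbf{t}_{U, W(k), \ffrm}$ to an honest lift $\rho_\ffrm : \Gamma_{K, \emptyset} \to \hG(\cB(U, W(k))_\ffrm)$ of $\overline{\rho}_\ffrm$, and hence determines a canonical surjection $R_{\overline{\rho}_\ffrm} \twoheadrightarrow \cB(U, W(k))_\ffrm$ through which $M := (\cC_{G, k, \text{cusp}}^U)_\ffrm$ becomes a finite $R_{\overline{\rho}_\ffrm}$-module. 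The goal is to upgrade this structure to finite freeness.

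I would then set up the Galois cohomology on the curve $X$. Condition (3) gives $H^0(\Gamma_{K, \emptyset}, \operatorname{ad} \overline{\rho}_\ffrm) = 0$ and, dually, controls $H^2$, so that the global Euler characteristic formula on $X$ pins down $\dim H^1$. A Taylor--Wiles prime is then a place $v$ of $K$ at which $\overline{\rho}_\ffrm(\Frob_v)$ is regular semisimple with a prescribed eigenvalue pattern on $\operatorname{ad} \overline{\rho}_\ffrm$; relaxing the level at $v$ from $G(\cO_{K_v})$ to an Iwahori-type subgroup both produces an unobstructed local deformation problem of the expected dimension and introduces a new ``diamond'' action on the Hecke module. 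The main obstacle, and the essential use of hypothesis (4), is a Chebotarev argument showing that for every $n \geq 1$ there exists a set $Q_n$ of such primes with $|Q_n| = q := \dim H^1_{\cL}(\Gamma_{K, \emptyset}, \operatorname{ad} \overline{\rho}_\ffrm)$ for which the dual Selmer group $H^1_{\cL^\perp}(\Gamma_{K, Q_n}, \operatorname{ad}^* \overline{\rho}_\ffrm)$ vanishes; this is exactly what $\hG$-abundance of $\overline{\rho}_\ffrm(\Gamma_{K(\zeta_\ell)})$ is engineered to supply, and verifying it in sufficient generality for non-classical $\hG$ is the delicate new ingredient over the $\GL_n$ story.

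Given the $Q_n$, the patching proceeds along the now-standard template, with the welcome simplification that the everywhere-unramified function-field setting imposes no archimedean or $\ell$-adic Hodge-theoretic constraints. For each $n$, the augmented Hecke module $M_{Q_n} := (\cC_{G, k, \text{cusp}}^{U_{Q_n}})_{\ffrm_{Q_n}}$ is finite free over a diamond algebra $S_n$, and the augmented deformation ring $R_{Q_n}$ is a quotient of a fixed complete power-series ring $R_\infty$ over $W(k)$ in $g$ variables. Passing to a compatible inverse limit of such data along an ultrafilter produces a patched module $M_\infty$ that is finite free over a power-series ring $S_\infty$ over $W(k)$ in $q$ variables and carries a compatible $R_\infty$-action, with $\dim S_\infty = \dim R_\infty$ by construction. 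A depth/Auslander--Buchsbaum computation then forces $M_\infty$ to be free over $R_\infty$, and reducing modulo the augmentation ideal of $S_\infty$ recovers $M$ as a finite free module over $R_{\overline{\rho}_\ffrm}$, as claimed.
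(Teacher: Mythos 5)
Your overall route is the same one the paper takes: an `$R=\cB$' statement proved by the Taylor--Wiles--Diamond method, with Proposition \ref{prop_characters_and_pseudocharacters} supplying the map $R_{\overline{\rho}_\ffrm} \to \cB(U,W(k))_\ffrm$ (surjective because $\cB(U,W(k))_\ffrm$ is generated by excursion operators, each of which is the image of an element of $R_{\overline{\rho}_\ffrm}$), $\hG$-abundance of $\overline{\rho}_\ffrm(\Gamma_{K(\zeta_\ell)})$ used via Chebotarev to produce sets $Q_n$ of Taylor--Wiles places killing the dual Selmer group, and a patched-module depth argument giving freeness. However, there is a genuine gap in your accounting of hypothesis (3). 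You invoke strong $\hG$-irreducibility only to get $H^0(\Gamma_{K,\emptyset},\widehat{\frg}_k)=0$, but that vanishing already follows from absolute $\hG$-irreducibility together with the scheme-theoretic triviality of $\Cent(\hG^{\text{ad}}_k,\overline{\rho}_\ffrm)$, i.e.\ from hypothesis (2); so in your argument condition (3) does no work at all, and the place where it is genuinely needed is skipped.

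The missing step is on the automorphic side, at the auxiliary levels $U_{Q_n}$, and it is the main function-field-specific difficulty. Since $G$ is semisimple and the quotient $G(K)\backslash G(\bbA_K)/U_{Q_n}$ is infinite, the full space of $W(k)$-valued smooth left-$G(K)$-invariant functions of level $U_{Q_n}$ is not a finite $W(k)$-module, and the excursion operators --- hence the pseudocharacter/Galois structure you need on the modules $M_{Q_n}$ in order to define $\ffrm_{Q_n}$, the map from $R_{Q_n}$, and the diamond action compatibly --- are only defined on the cuspidal subspace. One must therefore show that, after localizing at the relevant maximal ideal of the algebra generated by the \emph{unramified Hecke operators alone} (which, unlike excursion operators, act on the whole space), one lands inside the finite-rank cuspidal submodule. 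This is exactly what strong $\hG$-irreducibility is engineered to give: any eigensystem at level $U_{Q_n}$ whose Satake parameters at Frobenius elements are congruent modulo $\ffrm$ to those of $\overline{\rho}_\ffrm$ must correspond to a $\hG$-irreducible parameter, so it cannot arise from Eisenstein series attached to a proper parabolic, and the localized space is cuspidal. Without this, your assertions that $M_{Q_n}$ is finite free over $S_n$ and that $M_{Q_n}/\fra_n \cong M$ (the usual level-comparison input to patching) are not justified, so the patching construction does not get off the ground; with it, the rest of your sketch does agree with the paper's argument.
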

\begin{corollary}
With the assumptions of the theorem, let $\rho : \Gamma_{K, \emptyset} \to \hG(\overline{\bbQ}_\ell)$ be a continuous homomorphism such that $\overline{\rho} \cong \overline{\rho}_\ffrm$. Then $\rho$ is cuspidal automorphic. 
\end{corollary}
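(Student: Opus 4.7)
The plan is to deduce the corollary from Theorem \ref{thm_automorphy_lifting} by a standard ``$R = \cB$'' specialization argument. First I would observe that the map $R_{\overline{\rho}_\ffrm} \to \cB(U, W(k))_\ffrm$ furnished by Proposition \ref{prop_characters_and_pseudocharacters} (through the universal property of $R_{\overline{\rho}_\ffrm}$) is \emph{surjective}: by construction the pseudocharacter of the lift $\rho_\ffrm$ equals $\mathbf{t}_{U, W(k), \ffrm}$, so its image contains every excursion operator $S_{n, f, \gamma} = f(\rho_\ffrm(\gamma_1), \dots, \rho_\ffrm(\gamma_n))$, and these generate $\cB(U, W(k))_\ffrm$ by definition. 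Writing $M := (\cC_{G, k, \text{cusp}}^U)_\ffrm$, Theorem \ref{thm_automorphy_lifting} gives that $M$ is finite free over $R_{\overline{\rho}_\ffrm}$, and $M \neq 0$ because $\cB(U, W(k))_\ffrm$, a direct factor of $\cB(U, W(k))$, acts faithfully on it. Thus the kernel of the surjection annihilates a nonzero free module and must vanish, producing an isomorphism $R_{\overline{\rho}_\ffrm} \cong \cB(U, W(k))_\ffrm$.

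Next I would conjugate $\rho$ so that it takes values in $\hG(\overline{\bbZ}_\ell)$ with reduction literally equal to $\overline{\rho}_\ffrm$. Then $\rho$ defines a point $x : R_{\overline{\rho}_\ffrm} \to \overline{\bbZ}_\ell$ of $\Def_{\overline{\rho}_\ffrm}$, which via the isomorphism becomes a $W(k)$-algebra map $x : \cB(U, W(k))_\ffrm \to \overline{\bbZ}_\ell$. Because $\cB(U, W(k))_\ffrm$ is finite over $W(k)$, the algebra $R_{\overline{\rho}_\ffrm} \otimes \overline{\bbQ}_\ell$ is a semilocal Artinian $\overline{\bbQ}_\ell$-algebra, and since $M$ is free of positive rank over $R_{\overline{\rho}_\ffrm}$ the $x$-component of the resulting decomposition of $M \otimes_{W(k)} \overline{\bbQ}_\ell$ is nonzero. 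This component sits inside $\cA_{G, \text{cusp}}^U$, and since $\cB$ commutes with the $G(\bbA_K)$-action it must meet some isotypic piece $V_\pi$ nontrivially. The corresponding $\pi$ is a cuspidal automorphic representation which is unramified everywhere (as $U_v = G(\cO_{K_v})$ for all $v$), and the system of $\cB$-eigenvalues on that intersection picks out a maximal ideal $\frp \subset \cB_\pi$ on which $\cB$ acts through $x$.

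It remains to identify $\rho$ with the Galois representation $\sigma_{\pi, \frp}$ supplied by Lafforgue's construction. By construction, both representations have $\hG$-pseudocharacter equal to the composition of $\mathbf{t}_{U, W(k), \ffrm}$ with $x$, and both reduce (up to conjugacy) to $\overline{\rho}_\ffrm$, which is absolutely $\hG$-irreducible. A routine parabolic-descent argument, using properness of the partial flag variety $\hG / P$ to extend any $\overline{\bbQ}_\ell$-point to an $\overline{\bbZ}_\ell$-point and then reduce modulo $\ell$, shows that any lift of an $\hG$-irreducible representation remains $\hG$-irreducible; hence both $\rho$ and $\sigma_{\pi, \frp}$ are $\hG$-completely reducible. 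Part (1) of Theorem \ref{thm_pseudocharacters_biject_with_representations} then forces $\rho \cong \sigma_{\pi, \frp}$ as $\hG(\overline{\bbQ}_\ell)$-conjugacy classes, which by Definition \ref{def_cuspidal_automorphic_galois_representation} is exactly the desired cuspidal automorphy of $\rho$. Since Theorem \ref{thm_automorphy_lifting} carries all of the analytic weight, the remaining specialization argument is essentially formal; I expect the only minor subtlety to be the parabolic-descent lemma used to transport $\hG$-irreducibility from $\overline{\rho}_\ffrm$ to its characteristic-zero lifts.
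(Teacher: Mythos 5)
Your proposal is correct and takes essentially the same route as the paper's proof: Theorem \ref{thm_automorphy_lifting} yields $R_{\overline{\rho}_\ffrm} \cong \cB(U, W(k))_\ffrm$ (you merely spell out the surjectivity-via-excursion-operators and freeness/faithfulness details that the paper leaves implicit), $\rho$ then specializes this isomorphism to a maximal ideal of $\cB_U$, and the pseudocharacter comparison together with $\hG$-irreducibility identifies $\rho$ with some $\sigma_{\pi, \frp}$ as in Definition \ref{def_cuspidal_automorphic_galois_representation}. The one point you state loosely is feeding $\rho$ into $\Def_{\overline{\rho}_\ffrm}$ via $\hG(\overline{\bbZ}_\ell)$: as the paper notes parenthetically, one must first conjugate $\rho$ into $\hG(\cO)$ for a complete Noetherian local $W(k)$-subalgebra $\cO \subset \overline{\bbQ}_\ell$ with residue field $k$, since $\overline{\bbZ}_\ell$ is not an allowable coefficient ring for the (pro-represented) deformation functor.
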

\begin{proof}
The theorem implies that the map $R_{\overline{\rho}_\ffrm} \to \cB(U, W(k))_\ffrm$ that we have constructed is an isomorphism. Any representation $\rho$ as in the statement of the corollary determines a homomorphism $R_{\overline{\rho}_\ffrm} \to \overline{\bbQ}_\ell$. (To show this, we first need to prove that a conjugate of $\rho$ takes values in $\hG(\cO)$, where $\cO$ is a complete Noetherian local $W(k)$-subalgebra of $\overline{\bbQ}_\ell$ of residue field $k$.) This in turn determines a homomorphism $\cB(U, W(k))_\ffrm \to \overline{\bbQ}_\ell$, hence a maximal ideal $\frp \subset \cB_U$ with the property that for each $n \geq 1$, $f \in \bbZ[\hG^n]^\hG$ and $\gamma = (\gamma_1, \dots, \gamma_n) \in \Gamma_{K, \emptyset}^n$, 
\[ f(\rho(\gamma_1), \dots, \rho(\gamma_n)) = S_{n,f,\gamma} \text{ mod }\frp. \]
This is exactly what it means for $\rho$ be cuspidal automorphic.
\end{proof}
There are two adjectives in the theorem that have yet to be defined: `strongly $\hG$-irreducible' and `$\hG$-abundant'. We remedy this now:
\begin{definition}
Let $\Omega$ be an algebraically closed field, and let $\Gamma$ be a group. We say that a homomorphism $\sigma : \Gamma \to \hG(\Omega)$ is strongly $\hG$-irreducible if for any other homomorphism $\sigma' : \Gamma \to \hG(\Omega)$ such that for all $\gamma \in \Gamma$, $\sigma(\gamma)^\text{ss}$ and $\sigma'(\gamma)^\text{ss}$ are $\hG(\Omega)$-conjugate, $\sigma'$ is $\hG$-irreducible.
\end{definition}
Thus a strongly $\hG$-irreducible representation is $\hG$-irreducible. We do not know an example of a representation which is $\hG$-irreducible but not strongly $\hG$-irreducible. 
\begin{definition}
Let $k$ be a finite field, and let $H \subset \hG(k)$ be a subgroup. We say that $H$ is $\hG$-abundant if the following conditions are satisfied:
\begin{enumerate}
\item The cohomology groups $H^0(H, \widehat{\frg}_k)$, $H^0(H, \widehat{\frg}_k^\vee)$, $H^1(H, \widehat{\frg}_k^\vee)$ and $H^1(H, k)$ all vanish. (Here $\widehat{\frg}_k$ denotes the Lie algebra of $\hG_k$, and $\widehat{\frg}_k^\vee$ its dual.)
\item For each regular semisimple element $h \in H$, the torus $\Cent(\hG_k, h)^\circ$ is split.
\item For each simple $k[H]$-submodule $W \subset \widehat{\frg}^\vee_k$, there exists a regular semisimple element $h \in H$ such that $W^h \neq 0$ and $\Cent(\hG_k, h)$ is connected.
\end{enumerate}
\end{definition}
The roles of these two definitions are as follows: the strong irreducibility of $\overline{\rho}_\ffrm$ allows us to cut down $\cC_{G, k}^U$ to its finite rank $W(k)$-submodule $\cC_{G, k, \text{cusp}}^U$ using only Hecke operators (and not excursion operators). The $\hG$-abundance of $\overline{\rho}_\ffrm(\Gamma_{K(\zeta_\ell)})$ is used in the construction of sets of Taylor--Wiles places, which are the main input in the proof of Theorem \ref{thm_automorphy_lifting}. 

If $\ell$ is sufficiently large, then the group $\hG(\bbF_\ell)$ is both strongly $\hG$-irreducible (inside $\hG(\overline{\bbF}_\ell)$) and $\hG$-abundant (inside $\hG(k)$, for a sufficiently large finite extension $k / \bbF_\ell$). However, it is not clear how many other families of examples there are! This motivates the following question: 
\begin{question}
Can one prove an analogue of Theorem \ref{thm_automorphy_lifting} with weaker hypotheses? For example, can one replace conditions 3. and  4. with the single requirement that $\overline{\rho}_\ffrm$ is absolutely $\hG$-irreducible and $\ell$ is sufficiently large, relative to $\hG$?
\end{question}
To weaken the `$\hG$-abundant' condition is analogous to weakening the `bigness' condition which appeared in the first automorphy lifting theorems for unitary groups proved in \cite{Clo08}. It seems like an interesting problem to try, in a way analogous to \cite{Tho12}, to replace this condition with the $\hG$-irreducibility of the residual representation $\overline{\rho}_\ffrm$. 

\section{Coxeter parameters}

In order to apply a result like Theorem \ref{thm_automorphy_lifting}, we need to have a good supply of representations $\overline{\rho} : \Gamma_{K, \emptyset} \to \hG(\overline{\bbF}_\ell)$ which we know to be residually automorphic (in the sense of arising from a maximal ideal of the excursion algebra $\cB(U, W(k))$ acting on cuspidal automorphic forms). 

Famously, Wiles used the Langlands--Tunnell theorem to prove the residual automorphy of odd surjective homomorphisms $\overline{\rho} : \Gamma_\bbQ \to \GL_2(\bbF_3)$, in order to be able to use his automorphy lifting theorems to prove the modularity of elliptic curves. Many recent applications of automorphy lifting theorems (e.g. to potential automorphy of $n$-dimensional Galois representations over number fields, or to the construction of lifts of residual representations with prescribed properties, as in \cite{Bar14}) have relied upon the automorphy of $n$-dimensional Galois representations which are induced from a character of the Galois group of a cyclic extension of numbers fields of degree $n$. The automorphy of such representations was proved by Arthur--Clozel, using a comparison of twisted trace formulae \cite{Art89}.

We obtain residually automorphic Galois representations from a different source, namely the geometric Langlands program. We first describe the class of representations that we use. We fix a split maximal torus $\hT \subset \hG$, and write $W = W( \hG, \hT)$ for the Weyl group of $\hG$. We assume in this section that $\hG$ is simple and simply connected.
\begin{definition}
An element $w \in W$ is called a Coxeter element if it is conjugate to an element of the form $s_1 \dots s_r$, where $R = \{ \alpha_1, \dots, \alpha_r \} \subset \Phi(\hG, \hT)$ is any choice of ordered root basis and $s_1, \dots, s_r \in W$ are the corresponding simple reflections.
\end{definition}
It is a fact that the Coxeter elements form a single conjugacy class in $W$, and therefore have a common order $h$, which is called the Coxeter number of $\hG$. They were defined and studied by Coxeter in the setting of reflection groups. Kostant applied them to the study of reductive groups \cite{Kos59}, and his results form the foundation of our understanding of the following definition:
\begin{definition}
Let $\Gamma$ be a group, and let $\Omega$ be an algebraically closed field. We call a homomorphism $\phi : \Gamma \to \hG(\Omega)$ a Coxeter homomorphism if it satisfies the following conditions:
\begin{enumerate}
\item There exists a maximal torus $T \subset \hG_\Omega$ such that $\phi(\Gamma) \subset N(\hG_\Omega, T)$, and the image of $\phi(\Gamma)$ in $W \cong N(\hG_\Omega, T) / T$ is generated by a Coxeter element $w$. We write $\phi^\text{ad}$ for the composite of $\phi$ with projection $\hG(\Omega) \to \hG^\text{ad}(\Omega)$, and $T^\text{ad}$ for the image of $T$ in  $\hG^\text{ad}_\Omega$.
\item There exists a prime $t \equiv 1 \text{ mod }h$ not dividing $\text{char }\Omega$ or $\# W$ and a primitive $h^\text{th}$-root of unity $q \in \bbF_t^\times$ such that $\phi^\text{ad}(\Gamma) \cap T^\text{ad}(\Omega)$ is cyclic of order $t$, and conjugation by $w$ acts on the image by the map $v \mapsto v^q$.\footnote{As the notation suggests, in applications we will take $q$ to be the image in $\bbF_t$ of the cardinality of the field of scalars in $K$.}
\end{enumerate}
\end{definition}
We recall that if $\hG = \SL_n$, then $W = S_n$ and the Coxeter elements are the $n$-cycles. In this case the Coxeter homomorphisms appear among those homomorphisms $\Gamma \to \SL_n(\Omega)$ which are induced from a character of an index $n$ subgroup. However, the above definition is valid for any simply connected simple $\hG$ and has very good properties:
\begin{proposition}\label{prop_properties_of_coxeter_parameters}
Let $\phi : \Gamma \to \hG(\Omega)$ be a Coxeter homomorphism. Then:
\begin{enumerate}
\item $\phi$ is $\hG$-irreducible. 
\item If $\phi' : \Gamma \to \hG(\Omega)$ is another homomorphism such that for all $\gamma \in \Gamma$, $\phi(\gamma)^\text{ss}$ and $\phi(\gamma')^\text{ss}$ are $\hG(\Omega)$-conjugate, then $\phi$ and $\phi'$ are themselves $\hG(\Omega)$-conjugate. In particular, $\phi$ is even strongly $\hG$-irreducible.
\item The image $\phi(\Gamma)$ is an $\hG$-abundant subgroup of $\hG(\Omega)$.
\end{enumerate}
\end{proposition}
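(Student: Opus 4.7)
The plan is to exploit the rigidity built into the Coxeter structure, with Kostant's analysis of the primitive eigenspaces of a Coxeter element as the key geometric input.

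For (1), the main observation is that a generator $v_0$ of $\phi^{\text{ad}}(\Gamma) \cap T^{\text{ad}}(\Omega)$ is a regular element of $T^{\text{ad}}$: its class in $X_*(T^{\text{ad}}) \otimes \bbF_t$ lies in the $q$-eigenspace for the $w$-action, and by Kostant this eigenspace is one-dimensional and consists entirely of regular vectors. Hence $v_0$ is regular in $T$, and $T = \Cent_{\hG}(v_0)$ is the unique maximal torus containing $v_0$. If $\phi(\Gamma) \subset P(\Omega)$ for a proper parabolic $P$, a $P$-conjugation puts $v_0$ in a Levi $L$ of $P$; then $T \subset L \subset P$, so $P$ is a parabolic containing $T$, and the Weyl image $\langle w \rangle$ of $\phi(\Gamma)$ must lie in the proper parabolic subgroup $\widetilde{W}_P \subsetneq W$ — impossible for a Coxeter element.

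For (2), choose $\gamma_0 \in \Gamma$ with $\phi(\gamma_0) = v_0$. The argument of (1) applied to $\phi'(\gamma_0)^{\text{ss}}$, which is $\hG(\Omega)$-conjugate to the regular semisimple $v_0$, gives $\hG$-irreducibility of $\phi'$; any unipotent element commuting with $\phi'(\gamma_0)^{\text{ss}}$ lies inside its toral centralizer and so is trivial, making $\phi'(\gamma_0)$ semisimple. After $\hG(\Omega)$-conjugation we arrange $\phi'(\gamma_0) = v_0$, and then for each $\gamma \in \Gamma$ the element $\phi'(\gamma) v_0 \phi'(\gamma)^{-1}$ is $\hG$-conjugate (hence $W$-conjugate) to a power of $v_0$ in $T$; after a residual torus-conjugation this forces $\phi'(\gamma) \in N_{\hG}(T)(\Omega)$ with the same toric generator and Weyl image as $\phi$. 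The two homomorphisms then differ by a $1$-cocycle of $\Gamma$ with values in $T(\Omega)$; the one-dimensionality (Kostant) of the primitive $w$-eigenspaces on $X^*(T) \otimes \bbF_t$, combined with the vanishing of $H^1(\langle w \rangle, T(\Omega))$, forces this cocycle to be a coboundary, yielding $\phi \sim \phi'$.

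For (3), set $H = \phi(\Gamma)$. Its order divides $t h \cdot |Z(\hG)|$, coprime to $\ell$ under the standing hypotheses ($\ell > \#W \geq h$, $\ell \neq t$), so $H^i(H, V) = 0$ for $i \geq 1$ and every $k[H]$-module $V$. The vanishing of $\widehat{\frg}_k^H$ and $(\widehat{\frg}_k^\vee)^H$ follows from the decomposition $\widehat{\frg}_k = \frt_k \oplus \bigoplus_\alpha (\widehat{\frg}_k)_\alpha$: the Cartan has no $w$-invariants by Kostant, and $v_0$ acts by a non-trivial character on each root line by regularity. The regular semisimple elements of $H$ fall into two types — those in $T(k) \cap H$, with centralizer the already-split $T$, and elements of Coxeter type lifting non-trivial powers of $w$, whose centralizer is a twisted maximal torus split over $k$ by the standing hypotheses on the ground field (containing the requisite roots of unity built into the Coxeter data). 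For each simple $k[H]$-summand $W \subset \widehat{\frg}_k^\vee$ — either contained in $\frt_k^\vee$ or supported on a single $\langle w \rangle$-orbit of root lines — one exhibits a regular semisimple $h \in H$ (of toric or Coxeter type respectively) with $W^h \neq 0$ and connected centralizer.

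The main obstacle I anticipate is the rigidity step in (2): upgrading the pointwise matching of semisimple conjugacy classes to a global $\hG(\Omega)$-conjugacy of $\phi$ and $\phi'$ requires, beyond the finite-centralizer arguments that normalize $\phi'$ onto $N_{\hG}(T)(\Omega)$, a careful use of Kostant's eigenspace rigidity to align the residual toric cocycle data; a parallel subtlety in (3) is checking that the Coxeter-type centralizer tori are genuinely split over $k$ rather than only over $\overline{k}$.
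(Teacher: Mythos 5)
A preliminary remark: the survey states this proposition without proof (it is established in \cite{Boc17}, with Kostant's results as the key input), so I am judging your sketch on its own terms. Your part (1) is correct in outline and is the expected argument: the generator $v_0$ of the order-$t$ toral part is regular because it spans the $q$-eigenspace of $w$ on the $t$-torsion, whose nonzero vectors are regular (this does need to be transferred from Kostant's characteristic-zero statement to $\bbF_t$, using $t \nmid \#W$); hence any parabolic $P$ containing the image contains $T = \Cent(\hG_\Omega, v_0)$, the Weyl image $\langle w \rangle$ lands in $W_L$ for a proper Levi $L$, and this contradicts the ellipticity of a Coxeter element. Part (3) is a reasonable sketch, though it silently imports hypotheses not in the statement (you use $\ell > \#W$ to make $\#\phi(\Gamma)$ prime to $\ell$ and to ensure the Cartan has no $w$-fixed vectors modulo $\ell$), and the splitness over $k$ of the Coxeter-type centralizer tori, their regularity, and the existence of an eigenvalue $1$ on a given orbit of root lines are exactly the points that need genuine verification, as you acknowledge.

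The genuine gap is in part (2), which is the heart of the proposition (it is what gives strong $\hG$-irreducibility, a property the paper emphasizes is not known to follow from irreducibility in general). After normalizing $\phi'(\gamma_0) = v_0$, you assert that since $\phi'(\gamma) v_0 \phi'(\gamma)^{-1}$ is conjugate to a power of $v_0$, a ``residual torus-conjugation'' forces $\phi'(\gamma) \in N_{\hG}(T)(\Omega)$. This step is vacuous: $\phi'(\gamma) v_0 \phi'(\gamma)^{-1}$ is conjugate to $v_0$ for \emph{every} element of $\hG(\Omega)$, and the conjugating element depends on $\gamma$, so nothing places $\phi'(\gamma) v_0 \phi'(\gamma)^{-1}$ inside $T$ itself, which is what normalizing $T$ would require. (Likewise, one regular semisimple element in the image does not give $\hG$-irreducibility of $\phi'$ ``by the argument of (1)'' --- a Borel contains regular semisimple elements --- though this is not needed for the semisimplicity of $\phi'(\gamma_0)$.) The concluding cohomological step is also false as stated: even granting $\phi'(\Gamma) \subset N_{\hG}(T)(\Omega)$ with the same Weyl image as $\phi$, the difference cocycle lives in $H^1(\Gamma, T(\Omega))$, not $H^1(\langle w \rangle, T(\Omega))$; its restriction to $\Gamma_0 = \ker(\Gamma \to W)$ is an arbitrary character $\Gamma_0 \to T(\Omega)$, and such classes are by no means all trivial. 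Indeed, if your argument were correct it would show that any two homomorphisms into $N_{\hG}(T)(\Omega)$ with Weyl image $\langle w \rangle$ are conjugate, which is false: for $\hG = \SL_n$ such homomorphisms are inductions of characters of $\Gamma_0$, and inductions of characters not in the same $\langle w \rangle$-orbit are non-isomorphic. The pointwise hypothesis that $\phi(\gamma)^{\mathrm{ss}}$ and $\phi'(\gamma)^{\mathrm{ss}}$ are conjugate must therefore be used in an essential way at precisely this stage --- both to place $\phi'(\Gamma)$ uniformly inside $N_{\hG}(T)(\Omega)$ (for instance by exploiting the classes of all the elements $\phi'(\gamma \gamma_0^k)$, $k \in \bbZ$) and to pin down the toral character up to a single element of $W$ rather than a $\gamma$-dependent one --- and your sketch, as you yourself anticipate in your closing paragraph, does not supply this argument.
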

Now suppose that $\phi : \Gamma_{K, \emptyset} \to \hG(\overline{\bbQ}_\ell)$ is a Coxeter parameter. Then there exists a degree $h$ cyclic extension $K' / K$ such that $\phi(\Gamma_{K'})$ is contained in a torus of $\hG$; the homomorphism $\phi|_{\Gamma_{K'}}$ is  therefore associated to Eisenstein series on $G(\bbA_{K'})$. One can ask whether it is possible to use this to obtain a cuspidal automorphic representation of $G(\bbA_K)$ (or better, a maximal ideal of the excursion algebra $\cB_U$) to which $\phi$ corresponds. One case in which the answer is affirmative is as follows:
\begin{theorem}\label{thm_automorphy_of_coxeter_parameters}
Let $\phi : \Gamma_{K, \emptyset} \to \hG(\overline{\bbQ}_\ell)$ be a Coxeter parameter such that $\phi( \Gamma_{K \cdot \overline{\bbF}_q}) \subset \hT(\overline{\bbQ}_\ell)$. Then $\phi$ is cuspidal automorphic, in the sense of Definition \ref{def_cuspidal_automorphic_galois_representation}. 
\end{theorem}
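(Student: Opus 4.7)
My plan is to produce a cuspidal automorphic representation $\pi$ of $G(\bbA_K)$ whose unramified Satake parameters agree with $\phi(\Frob_v)^{\text{ss}}$ at every unramified place, and then invoke Proposition \ref{prop_properties_of_coxeter_parameters}(2) (which says that the strongly $\hG$-irreducible Coxeter parameter $\phi$ is determined up to $\hG(\overline{\bbQ}_\ell)$-conjugacy by its Frobenius semisimplifications) to deduce $\sigma_{\pi,\frp} \cong \phi$ for the associated maximal ideal $\frp \subset \cB_\pi$. The setup is as follows: the cyclic Weyl-projection of $\phi$ of order $h$ cuts out a unique cyclic extension $K'/K$ with $\Gal(K'/K) \cong \bbZ/h\bbZ$, and the hypothesis $\phi(\Gamma_{K\overline{\bbF}_q}) \subset \hT(\overline{\bbQ}_\ell)$ forces $K' \subset K \cdot \overline{\bbF}_q$, so $K' = K\bbF_{q^h}$ is the degree-$h$ constant field extension. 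The restriction $\phi|_{\Gamma_{K'}}$ is then an everywhere unramified abelian representation into $\hT(\overline{\bbQ}_\ell)$, which by class field theory for tori corresponds to an unramified continuous character $\chi : T(K')\backslash T(\bbA_{K'}) \to \overline{\bbQ}_\ell^\times$, where $T \subset G$ is the split torus dual to $\hT$. Since $w$ is elliptic and regular in $W$, the $\Gal(K'/K)$-translates $w^i \cdot \chi$ for $0 \le i < h$ are pairwise distinct; this regularity is the crucial input for everything that follows.

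The central step is to exhibit, from the datum $(K'/K, \chi)$, a cuspidal automorphic representation $\pi$ of $G(\bbA_K)$ whose base change to $G(\bbA_{K'})$ is the principal series constituent attached to $\chi$. This is a form of non-abelian automorphic induction from a torus along the cyclic unramified extension $K'/K$; for $G = \GL_n$ it reduces to classical cyclic functoriality, but for general simply connected simple $\hG$ no analogous theorem of functoriality is available. I would build $\pi$ directly in the function-field setting using the geometric Langlands program: the pair $(K'/K, \chi)$ is captured by a rigid Coxeter-type local system on the curve $X$, and I would produce the corresponding cuspidal Hecke eigensheaf on $\Bun_G$ via a construction in the spirit of the rigid sheaves of Heinloth--Ng\^o--Yun (and Yun's subsequent extensions handling arbitrary simply connected simple groups). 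Taking traces of Frobenius on this sheaf yields the desired $\pi$; cuspidality is forced by the regularity of $\chi$ under the Coxeter action, which kills every constant term along a proper parabolic subgroup, and Satake matching at an unramified place $v$ reduces to the Deligne--Lusztig identity relating $\chi$ restricted to the elliptic torus $T_w(\bbF_{q_v})$ to the conjugacy class $\phi(\Frob_v)^{\text{ss}}$.

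With $\pi$ in hand, V. Lafforgue's construction produces a maximal ideal $\frp \subset \cB_\pi$ such that $\sigma_{\pi,\frp}(\Frob_v)^{\text{ss}}$ lies in the Satake parameter of $\pi_v$ and hence matches $\phi(\Frob_v)^{\text{ss}}$ at every unramified $v$; Proposition \ref{prop_properties_of_coxeter_parameters}(2) then forces $\sigma_{\pi,\frp} \cong \phi$, so $\phi$ is cuspidal automorphic in the sense of Definition \ref{def_cuspidal_automorphic_galois_representation}. If the geometric step should only furnish a characteristic-zero automorphic lift of the mod-$\ell$ reduction $\overline{\phi}$ rather than of $\phi$ itself, the remaining gap can be closed using Theorem \ref{thm_automorphy_lifting}: Proposition \ref{prop_properties_of_coxeter_parameters} gives both strong $\hG$-irreducibility and $\hG$-abundance, the regularity of Coxeter elements makes $\Cent(\hG_k^{\text{ad}}, \overline{\phi})$ scheme-theoretically trivial, and one may select $\ell > \#W$ at the outset. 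The main obstacle is unquestionably the construction in the second paragraph: there is no known general theorem of non-abelian automorphic induction from a torus to $G$ along a cyclic extension, and the geometric substitute is available only because of the very rigid structure of Coxeter parameters together with the function-field setting --- which is precisely why the torus-restriction hypothesis on $\phi(\Gamma_{K\overline{\bbF}_q})$ is indispensable.
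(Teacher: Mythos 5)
Your reduction of the problem and your endgame match the paper: under the hypothesis $\phi(\Gamma_{K\cdot\overline{\bbF}_q}) \subset \hT(\overline{\bbQ}_\ell)$ the Weyl-group quotient is indeed cut out by the constant extension of degree $h$, the target is a spherical cusp form whose Satake parameters match $\phi(\Frob_v)^{\text{ss}}$, and the passage from such a form to a maximal ideal of the excursion algebra with $\sigma_{\pi,\frp} \cong \phi$ does rest, exactly as you say, on part 2 of Proposition \ref{prop_properties_of_coxeter_parameters}. The difficulty is concentrated where you locate it, in the construction of $\pi$.

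However, the mechanism you propose for that central step is the wrong tool, and as stated it is a genuine gap. The rigid-local-system constructions of Heinloth--Ng\^o--Yun and Yun take as input very specific ramification data (wild or carefully prescribed tame ramification at marked points of $\bbP^1$) and produce eigensheaves for rigid local systems of that shape; they are not a machine that accepts an everywhere unramified parameter on an arbitrary curve $X$ whose restriction to $X_{\overline{\bbF}_q}$ is induced from a $\hT$-local system. Your situation is of Eisenstein type over the constant field extension, not of rigid type, and the construction the paper actually uses is the geometric Eisenstein series of Braverman--Gaitsgory: one forms the Hecke eigensheaf on $\Bun_{G, \overline{\bbF}_q}$ with eigenvalue the toral local system $\phi|_{\Gamma_{K\cdot\overline{\bbF}_q}}$, observes that it carries a Weil descent datum (this is where the Coxeter twist by $w$ enters), and takes the associated function to get the spherical form $f$ with the required Hecke eigenvalues. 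Cuspidality is then not a routine consequence of the regularity of $\chi$ as you assert -- geometric Eisenstein series are a priori the opposite of cuspidal -- but is proved by a separate geometric argument (Gaitsgory's appendix to the paper of B\"ockle--Harris--Khare--Thorne), which is where the ellipticity of the Coxeter element is really exploited. So the skeleton of your argument is sound, but the one step you flag as the main obstacle is not supplied by the technology you cite; replacing it with the Braverman--Gaitsgory eigensheaf plus Weil descent and the cuspidality argument recovers the paper's proof, and your final fallback via Theorem \ref{thm_automorphy_lifting} is then unnecessary.
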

\begin{proof}[Proof (sketch)]
Braverman--Gaitsgory construct \cite{Bra02} the geometric analogue of Eisenstein series for the group $G$: in other words, a Hecke eigensheaf on $\Bun_{G, \overline{\bbF}_q}$ with `eigenvalue' $\phi|_{\Gamma_{K \cdot \overline{\bbF}_q}}$. This Hecke eigensheaf is equipped with a Weil descent datum, which allows us to associate to it an actual spherical automorphic form $f : G(K) \backslash G(\bbA_K)  \to \overline{\bbQ}_\ell$ whose Hecke eigenvalues agree with those determined by $\phi$, under the Satake isomorphism. Using geometric techniques (see Gaitsgory's appendix to \cite{Boc17}), one can further show that this automorphic form is in fact cuspidal. The existence of a maximal ideal in the excursion algebra $\cB_U$ corresponding to $\phi$ then follows from the existence of $f$ and the good properties of Coxeter parameters (in particular, the second part of Proposition \ref{prop_properties_of_coxeter_parameters}).
\end{proof}
To illustrate the method, here is the result we obtain on combining Theorem \ref{thm_automorphy_of_coxeter_parameters} with our automorphy lifting Theorem \ref{thm_automorphy_lifting}:
\begin{theorem}\label{thm_automorphy_of_residually_coxeter_representation}
Let $\ell > \# W$ be a prime, and let $\rho : \Gamma_{K, \emptyset} \to \hG(\overline{\bbQ}_\ell)$ be a continuous homomorphism such that $\overline{\rho}$ is a Coxeter parameter and $\overline{\rho}(\Gamma_{K \cdot \overline{\bbF}_q})$ is contained in a conjugate of $\hT(\overline{\bbF}_\ell)$. Then $\rho$ is cuspidal automorphic, in the sense of Definition \ref{def_cuspidal_automorphic_galois_representation}.
\end{theorem}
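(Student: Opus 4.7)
The plan is to combine the two main tools of this section: Theorem \ref{thm_automorphy_of_coxeter_parameters} will provide residual automorphy, and Theorem \ref{thm_automorphy_lifting} (together with its corollary) will then lift this to automorphy of $\rho$ itself. So the task reduces to producing a maximal ideal $\ffrm \subset \cB(U, W(k))$ (for suitable finite $k \subset \overline{\bbF}_\ell$) with associated residual representation $\overline{\rho}_\ffrm \cong \overline{\rho}$, and then verifying hypotheses (1)--(4) of Theorem \ref{thm_automorphy_lifting} for $\overline{\rho}_\ffrm$.

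For the first step, I would construct a characteristic-zero Coxeter parameter lifting $\overline{\rho}$. The image of $\overline{\rho}$ is an extension of a cyclic subgroup of $W$ (of order dividing the Coxeter number $h$) by a cyclic group of order $t$, and both $h$ and $t$ are prime to $\ell$ (since $\ell > \#W \geq h$, and $t$ is chosen coprime to $\#W$ in the definition of a Coxeter parameter; we may also choose $t \neq \ell$). Hence there is a Teichm\"uller-type lift $\phi : \Gamma_{K, \emptyset} \to \hG(\overline{\bbQ}_\ell)$ of $\overline{\rho}$ with the same (prime-to-$\ell$) image, so $\phi$ is again a Coxeter parameter and $\phi(\Gamma_{K \cdot \overline{\bbF}_q})$ lies in (a conjugate of) $\hT(\overline{\bbQ}_\ell)$. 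Applying Theorem \ref{thm_automorphy_of_coxeter_parameters} gives a cuspidal automorphic representation $\pi$ and a maximal ideal $\frp \subset \cB_U$ with $\sigma_{\pi, \frp} \cong \phi$. Reducing the excursion pseudocharacter modulo $\ell$, and using the last proposition of \S 3 (compatibility of pseudocharacters with reduction mod $\ell$), one obtains a maximal ideal $\ffrm \subset \cB(U, W(k))$ lying under $\frp$ with $\overline{\rho}_\ffrm \cong \overline{\rho}^{\text{ss}} = \overline{\rho}$, the semisimplification being automatic since $\overline{\rho}$ is $\hG$-irreducible by Proposition \ref{prop_properties_of_coxeter_parameters}(1).

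Next I would check the hypotheses of Theorem \ref{thm_automorphy_lifting}. Condition (1) is the assumption $\ell > \#W$. Condition (3), absolute strong $\hG$-irreducibility, is Proposition \ref{prop_properties_of_coxeter_parameters}(2). For condition (2), scheme-theoretic triviality of $\Cent(\hG^{\text{ad}}_k, \overline{\rho}_\ffrm)$: the reduced centralizer is trivial because $\overline{\rho}_\ffrm$ is $\hG$-irreducible with image projecting onto a Coxeter element of $W$, and the non-reduced part is ruled out by $\ell$ being coprime to $\#W$ (which controls the possible infinitesimal centralizers in $\hG^{\text{ad}}$). For condition (4), Proposition \ref{prop_properties_of_coxeter_parameters}(3) asserts that $\overline{\rho}(\Gamma_{K, \emptyset})$ is $\hG$-abundant; I need to promote this to the subgroup $\overline{\rho}(\Gamma_{K(\zeta_\ell)})$. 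Since $K(\zeta_\ell)/K$ is a constant field extension contained in $K \cdot \overline{\bbF}_q$, the image $\overline{\rho}(\Gamma_{K(\zeta_\ell)})$ sits between the cyclic torus image $\overline{\rho}(\Gamma_{K \cdot \overline{\bbF}_q})$ of order $t$ and all of $\overline{\rho}(\Gamma_{K, \emptyset})$; the quotient $\Gal(K(\zeta_\ell)/K)$ injects into $(\bbZ/\ell)^\times$, so (since $\ell > \#W$ is prime to $h$) the image still projects onto the full cyclic Coxeter subgroup of $W$, hence retains the structural features (presence of the Coxeter-element conjugacy class; the cyclic-of-order-$t$ subgroup acting with the required eigenvalues) that were used in the proof of Proposition \ref{prop_properties_of_coxeter_parameters}(3). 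The same cohomological vanishing and regular-semisimple-element arguments therefore apply verbatim.

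With all four hypotheses verified, Theorem \ref{thm_automorphy_lifting} applies and shows $(\cC_{G, k, \text{cusp}}^U)_\ffrm$ is a finite free $R_{\overline{\rho}_\ffrm}$-module; its corollary, applied to $\rho$ (whose reduction is $\overline{\rho} \cong \overline{\rho}_\ffrm$), then yields that $\rho$ is cuspidal automorphic in the sense of Definition \ref{def_cuspidal_automorphic_galois_representation}. The main obstacle is really localized in the preceding paragraph: verifying that passage from $\Gamma_{K, \emptyset}$ to the subgroup $\Gamma_{K(\zeta_\ell)}$ preserves $\hG$-abundance. This is not quite formal from Proposition \ref{prop_properties_of_coxeter_parameters}(3), but uses crucially that $K(\zeta_\ell)/K$ is a small constant extension (of degree prime to both $h$ and $t$ once $\ell > \#W$) and that the Coxeter structure on $\overline{\rho}$ survives restriction to this subgroup.
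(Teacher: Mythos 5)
Your route is the one the paper intends: it gives no argument for this theorem beyond ``combine Theorem \ref{thm_automorphy_of_coxeter_parameters} with Theorem \ref{thm_automorphy_lifting}'', and your prime-to-$\ell$ lifting of $\overline{\rho}$ to a characteristic-zero Coxeter parameter, the extraction of a maximal ideal $\ffrm$ with $\overline{\rho}_\ffrm \cong \overline{\rho}$, and the verification of hypotheses (1)--(3) via Proposition \ref{prop_properties_of_coxeter_parameters} are all in the intended spirit and are fine at this level of detail.

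There is, however, a genuine gap at exactly the step you flag as the main obstacle, namely hypothesis (4). Your argument rests on the claim that, because $\Gal(K(\zeta_\ell)/K)$ injects into $(\bbZ/\ell\bbZ)^\times$ and $\ell > \# W$ is prime to $h$, the image of $\Gamma_{K(\zeta_\ell)}$ still surjects onto the full cyclic group $\langle w \rangle \subset W$. This is a non sequitur: $K(\zeta_\ell)/K$ is a constant field extension whose degree $d = [\bbF_q(\zeta_\ell):\bbF_q]$ is the order of $q$ modulo $\ell$, which divides $\ell - 1$ and is in no way constrained to be coprime to $h$ by the condition $\ell > \# W$. Since $\overline{\rho}(\Gamma_{K \cdot \overline{\bbF}_q})$ lies in a torus, the composite $\Gamma_{K, \emptyset} \to W$ factors through $\Gal(\overline{\bbF}_q/\bbF_q)$, so the image of $\Gamma_{K(\zeta_\ell)}$ in $W$ is only $\langle w^d \rangle$; when $\gcd(d, h) > 1$ this is a proper subgroup of $\langle w \rangle$ and $\hG$-abundance of $\overline{\rho}(\Gamma_{K(\zeta_\ell)})$ can genuinely fail, not merely resist your argument. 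For instance, take $\hG = \SL_2$ (so $h = 2$) and $q$ of even order modulo $\ell$: then $\overline{\rho}(\Gamma_{K(\zeta_\ell)})$ is contained in a maximal torus, so $H^0(H, \widehat{\frg}_k) \supseteq \widehat{\frt}_k \neq 0$, violating the first abundance condition. So the passage from Proposition \ref{prop_properties_of_coxeter_parameters}(3) (abundance of the full image) to abundance of $\overline{\rho}(\Gamma_{K(\zeta_\ell)})$ is not ``verbatim''; it requires controlling the order of $q$ modulo $\ell$ relative to $h$ (or an additional argument), which is precisely the sort of auxiliary condition that the detailed treatment in \cite{Boc17} keeps track of and that this survey's streamlined statement suppresses. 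As written, your verification of hypothesis (4) does not go through, and this is the one essential point of the proof.
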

Here is a question motivated by a potential strengthening of Theorem \ref{thm_automorphy_of_residually_coxeter_representation}:
\begin{question}\label{q_multiplicity_one_of_coxeter_parameters}
Let $\phi : \Gamma_{K, \emptyset} \to \hG(\overline{\bbQ}_\ell)$ be a Coxeter parameter such that $\phi( \Gamma_{K \cdot \overline{\bbF}_q}) \subset \hT(\overline{\bbQ}_\ell)$, and let $\pi$ be the everywhere unramified cuspidal automorphic representation of $G(\bbA_K)$ whose existence is asserted by Theorem \ref{thm_automorphy_of_coxeter_parameters}. Can one show that $\pi$ appears with multiplicity 1 in the space $\cA_{G, \text{cusp}}$?
\end{question}
 Taking into account the freeness assertion in Theorem \ref{thm_automorphy_lifting}, we see that a positive answer to Question \ref{q_multiplicity_one_of_coxeter_parameters} would have interesting consequences for the multiplicity of cuspidal automorphic representations.

\section{Potential automorphy}\label{sec_potential_automorphy}

We can now describe the proof of Theorem \ref{thm_BHKT}. Let us therefore choose a representation $\rho : \Gamma_{K, \emptyset} \to \hG(\overline{\bbQ}_\ell)$ of Zariski dense image. We must find a finite Galois extension $L / K$ such that $\rho|_{\Gamma_L}$ is cuspidal automorphic. It is easy to reduce to the case where $\hG$ is simple and simply connected (equivalently: the group $G$ is simple and has trivial centre), so we now assume this. 

By Proposition \ref{prop_Zariski_dense_image}, we can assume, after replacing  $\rho$ by a conjugate, that there is a number field $E$, a system $(\rho_\lambda)_\lambda$ of continuous homomorphisms $\rho_{\lambda} : \Gamma_{K, \emptyset} \to \hG(E_\lambda)$ of Zariski dense image, and an embedding $E_{\lambda_0} \hookrightarrow \overline{\bbQ}_\ell$ such that $\rho = \rho_{\lambda_0}$. If any one of the representations $\rho_\lambda$ is automorphic, then they all are. We can therefore forget the original prime $\ell$ and think of the entire system $(\rho_\lambda)_\lambda$.

An application of a theorem of Larsen \cite{Lar95} furnishes us with strong information about this system of representations:
\begin{theorem}\label{thm_larsen}
With notation as above, we can assume (after possibly enlarging $E$ and replacing each $\rho_\lambda$ by a conjugate), that there exists a set $\cL$ of rational primes of Dirichlet density 0 with the following property: for each prime $\ell \not\in \cL$ which splits in $E$ and does not divide $q$, and for each place $\lambda | \ell$ of $E$, $\rho_\lambda(\Gamma_{K, \emptyset})$ has image equal to $\hG(\bbZ_\ell)$.
\end{theorem}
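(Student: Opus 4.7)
\emph{Plan.} The key tool is Larsen's theorem on the maximality of $\ell$-adic monodromy images for compatible systems \cite{Lar95}, applied to the compatible system $(\rho_\lambda)_\lambda$ produced in Proposition \ref{prop_Zariski_dense_image}.

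First, I would collect the structural input. Each $\rho_\lambda$ has Zariski-dense image in $\hG$ by construction, and by the reduction at the start of Section \ref{sec_potential_automorphy} we may assume $\hG$ is simple and simply connected. Hence the common algebraic monodromy group of the system is $\hG$ itself---a connected semisimple group---which is precisely the setting of Larsen's maximality results. In our function field context, the compatibility of the system rests on Chin's work \cite{Chi04}, the same input already used in Proposition \ref{prop_Zariski_dense_image}.

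Second, I would invoke Larsen's theorem to produce a density-zero set $\cL$ of rational primes such that for every $\ell \notin \cL$ with $\ell \nmid q$ and every place $\lambda \mid \ell$ of $E$, the subgroup $\rho_\lambda(\Gamma_{K,\emptyset}) \subset \hG(E_\lambda)$ is, up to $\hG(\overline{E}_\lambda)$-conjugacy, a hyperspecial maximal compact subgroup of $\hG(E_\lambda)$. Larsen's argument is a Chebotarev-style density argument: were hyperspeciality to fail on a positive-density set of primes, the resulting congruence restrictions on Frobenius characteristic polynomials would clash with the Zariski density of $\rho_{\lambda_0}$ together with the compatibility of the system, by a count comparing the parahoric stabilizer to the full $\hG(E_\lambda)$.

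Third, I would specialize to primes $\ell \notin \cL$, not dividing $q$, that split completely in $E$. For such $\ell$, every $\lambda \mid \ell$ satisfies $E_\lambda = \bbQ_\ell$. Since $\hG$ is split, simple, and simply connected, $\hG(\bbZ_\ell)$ is a hyperspecial maximal compact subgroup of $\hG(\bbQ_\ell)$, and all such hyperspecial subgroups are $\hG^{\text{ad}}(\bbQ_\ell)$-conjugate. After enlarging $E$ so that the conjugating elements furnished by the previous step are $E$-rational, and replacing each $\rho_\lambda$ by the corresponding conjugate, one obtains $\rho_\lambda(\Gamma_{K,\emptyset}) = \hG(\bbZ_\ell)$ as required. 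Along the way one must check that enlarging $E$ and conjugating individually at each $\lambda$ preserves both the compatibility of the system and the Zariski density of the individual $\rho_\lambda$, but this is formal since traces of Frobenius are preserved under conjugation.

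\emph{Main obstacle.} The principal difficulty is marshalling the function field version of Larsen's maximality theorem: one must verify that the Chebotarev input, the treatment of semisimple $\ell$-adic representations, and the group-theoretic arguments controlling parahoric and non-hyperspecial open compact subgroups of $\hG(E_\lambda)$ all go through in positive characteristic---an issue that in other guises is handled by the compatible-systems machinery of \cite{Chi04} built on \cite{Laf02}. Once this is in place, the remaining steps---passing from a hyperspecial conjugate to the distinguished integral model $\hG(\bbZ_\ell)$ and arranging everything uniformly after enlarging $E$---amount to bookkeeping with Bruhat--Tits theory.
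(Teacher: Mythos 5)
Your proposal is correct and takes essentially the same route as the paper: the survey (and the underlying work \cite{Boc17}) obtains this statement exactly by applying Larsen's maximality theorem \cite{Lar95} to the compatible system furnished by Proposition \ref{prop_Zariski_dense_image}, yielding hyperspecial maximal compact images away from a density-zero set of primes, and then identifying hyperspecial subgroups with $\hG(\bbZ_\ell)$ at primes split in $E$ since $\hG$ is split, simple and simply connected.
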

It follows that for $\lambda | \ell$ ($\ell \not\in \cL$ split in $E$), the residual representation $\overline{\rho}_\lambda$ can be taken to have image equal to $\hG(\bbF_\ell)$. It is easy to show that when $\ell$ is sufficiently large, such a residual representation satisfies the requirements of our Theorem \ref{thm_automorphy_lifting}.

We are now on the home straight. Using a theorem of Moret-Bailly and known cases of de Jong's conjecture \cite{Mor90, deJ01}, one can construct a finite Galois extension $L / K$ and (after possibly enlarging $E$) an auxiliary system of representations $(R_\lambda : \Gamma_{L, \emptyset} \to \hG(E_\lambda))_\lambda$ satisfying the following properties:
\begin{enumerate}
\item For each prime-to-$q$ place $\lambda$ of $\overline{\bbQ}$, $R_\lambda$ has Zariski dense image in $\hG(\overline{\bbQ}_\lambda)$. 
\item There exists a place $\lambda_1$ such that $\overline{R}_{\lambda_1} \cong \overline{\rho}_{\lambda_1}|_{\Gamma_{L, \emptyset}}$, and both of these representations have image $\hG(\bbF_{\ell_1})$, where $\ell_1$ denotes the residue characteristic of $\lambda_1$. 
\item There exists a place $\lambda_2$ such that $\overline{R}_{\lambda_2}$ is a Coxeter parameter and $\ell_2 > \# W$, where $\ell_2$ denotes the residue characteristic of $\lambda_2$.
\end{enumerate}
The argument to prove the automorphy of $\rho|_{\Gamma_{L, \emptyset}}$ is now the familiar one. By Theorem \ref{thm_automorphy_of_residually_coxeter_representation}, $R_{\lambda_2}$ is cuspidal automorphic. Since this property moves in compatible systems for representations with Zariski dense image, $R_{\lambda_1}$ is cuspidal automorphic. If $\ell_1$ is chosen to be sufficiently large, then we can apply Theorem \ref{thm_automorphy_lifting} to deduce that $\rho_{\lambda_1}|_{\Gamma_{L, \emptyset}}$ is cuspidal automorphic. Moving now in the compatible system containing $\rho_{\lambda_1}|_{\Gamma_{L, \emptyset}}$, we obtain finally the automorphy of the original representation $\rho|_{\Gamma_{L, \emptyset}}$, as desired.

One of the main attractions of our arguments is that they are uniform in the reductive group $G$. In particular, they are valid for exceptional groups. Using deformation theory, it is easy to find examples of global fields $K = \bbF_q(X)$ and continuous representations $\rho : \Gamma_{K, \emptyset} \to \hG(\overline{\bbQ}_l)$ of Zariski dense image. This gives, for example, the following simple corollary of Theorem \ref{thm_BHKT}:
\begin{corollary}
Let $G$ be the split simple group over $\bbF_q$ of type $E_8$; then the dual group $\hG$ is the split simple group over $\bbZ$ of type $E_8$. Let $\ell$ be a prime not dividing $q$. Then there exist infinitely pairs $(K, \rho)$, where $K = \bbF_q(X)$ is a global field and $\rho : \Gamma_{K, \emptyset} \to E_8(\overline{\bbQ}_\ell)$ is a representation of Zariski dense image which is cuspidal automorphic, in the sense of Definition \ref{def_cuspidal_automorphic_galois_representation}.
\end{corollary}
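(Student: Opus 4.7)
The corollary follows formally from Theorem~\ref{thm_BHKT} once one can exhibit pairs $(K_0, \rho_0)$ with $K_0 = \bbF_q(X_0)$ a function field and $\rho_0 : \Gamma_{K_0, \emptyset} \to E_8(\overline{\bbQ}_\ell)$ of Zariski dense image. Given any such pair, Theorem~\ref{thm_BHKT} supplies a finite Galois extension $L_0/K_0$ with $\rho_0|_{\Gamma_{L_0, \emptyset}}$ cuspidal automorphic. The new pair $(K, \rho) := (L_0, \rho_0|_{\Gamma_{L_0, \emptyset}})$ then has all the required properties: $K$ is itself of the form $\bbF_{q'}(Y)$ for some smooth projective curve $Y$, since finite separable extensions of function fields of curves over finite fields are themselves of that form; $\rho$ factors through $\Gamma_{L_0, \emptyset}$ because $\rho_0$ is everywhere unramified and each place of $L_0$ lies over one of $K_0$; and $\rho$ has Zariski dense image in $E_8$, since $E_8$ is connected and $\rho_0(\Gamma_{L_0})$ is a finite-index subgroup of the Zariski dense subgroup $\rho_0(\Gamma_{K_0})$.

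The real task is therefore to construct sufficiently many initial pairs $(K_0, \rho_0)$. I would first produce a residual representation $\overline{\rho}_0 : \Gamma_{K_0, \emptyset} \to E_8(\bbF_\ell)$ with full image $E_8(\bbF_\ell)$, for some curve $X_0$ of sufficiently large genus and $\ell$ sufficiently large. Here one exploits the fact that the geometric \'etale fundamental group $\pi_1^\text{\'et}(X_{0, \overline{\bbF}_q})$ is a profinite group on $2g$ topological generators with one relation, which has ample quotients; one then needs the surjection onto $E_8(\bbF_\ell)$ to be compatible with the outer action of $\Gal(\overline{\bbF}_q/\bbF_q)$ so as to descend to an arithmetic homomorphism on $\Gamma_{K_0, \emptyset}$. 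Next I would lift $\overline{\rho}_0$ to characteristic zero. The tangent and obstruction spaces for deformations lie in $H^1$ and $H^2$ of $\Gamma_{K_0, \emptyset}$ with coefficients in the adjoint Lie algebra, and the standard Euler-characteristic computation on a smooth projective curve over a finite field yields a positive-dimensional deformation space once the genus of $X_0$ is sufficiently large. Any lift $\rho_0 : \Gamma_{K_0, \emptyset} \to E_8(\overline{\bbZ}_\ell)$ has image containing a conjugate of $E_8(\bbF_\ell)$, hence is Zariski dense in $E_8(\overline{\bbQ}_\ell)$.

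To produce \emph{infinitely many} pairs $(K, \rho)$ one now varies any of the inputs: the residue characteristic $\ell$ (any sufficiently large prime works), the initial curve $X_0$ (e.g.\ over an infinite family of curves of growing genus), or the chosen lift $\rho_0$ inside the positive-dimensional deformation space. The resulting pairs are genuinely distinct, as they can be told apart by invariants such as the underlying field $K$, the residue characteristic of the coefficient field, or the conjugacy classes of Frobenius at unramified places (which determine $\rho$ up to $\hG$-conjugacy by the Chebotarev density theorem together with Proposition~\ref{prop_Zariski_dense_image}).

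The main obstacle in this program is the construction of the residual representation $\overline{\rho}_0$ with full image $E_8(\bbF_\ell)$, and in particular the requirement that the geometric surjection extend $\Gal(\overline{\bbF}_q/\bbF_q)$-equivariantly to the full arithmetic fundamental group $\Gamma_{K_0, \emptyset}$. Once such a residual representation is in hand the remaining steps (deformation to characteristic zero, application of Theorem~\ref{thm_BHKT}, restriction to the cover, and variation of the data) are essentially formal. The author's parenthetical that such examples are produced ``by deformation theory'' indicates that this first step is tractable, but it is the one part of the argument that requires genuine input beyond the formalism already established in the paper.
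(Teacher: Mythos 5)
Your overall route is the same as the paper's: in this survey the corollary is given essentially no separate proof beyond the sentence preceding it --- deformation theory produces pairs $(K_0,\rho_0)$ with Zariski dense image, Theorem~\ref{thm_BHKT} makes $\rho_0|_{\Gamma_{L_0,\emptyset}}$ cuspidal automorphic over some finite Galois extension $L_0/K_0$, and the pair $(L_0,\rho_0|_{\Gamma_{L_0,\emptyset}})$ is then the desired example (Zariski density surviving restriction to a finite-index subgroup because $E_8$ is connected). Your formal reductions (that $L_0$ is again a function field, that the restriction is everywhere unramified, the density argument) are all correct, and your identification of the genuinely non-formal step --- producing $\overline{\rho}_0$, i.e.\ a quotient of the \emph{arithmetic} fundamental group, not just a Frobenius-equivariant quotient of the geometric one --- is exactly the content hidden in the paper's phrase ``using deformation theory, it is easy to find examples''; the details are in \cite{Boc17}, not in this survey.

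The one concrete mismatch with the statement is the quantifier on $\ell$. The corollary fixes $\ell \nmid q$ \emph{first}, so ``vary $\ell$'' is not an available source of infinitude (varying the curve, or the lift inside a positive-dimensional deformation space, is fine and suffices, e.g.\ by forcing the genus of $K$ to grow). More seriously, your construction of $\overline{\rho}_0$ with full image $E_8(\bbF_\ell)$ and of an unobstructed characteristic-zero lift is set up only for $\ell$ sufficiently large; note that $\# W(E_8) = 2^{14}\cdot 3^5\cdot 5^2\cdot 7$, so for $\ell \in \{2,3,5,7\}$ even the representability statement for $\Def_{\overline{\rho}}$ quoted in the paper is unavailable, and one must argue differently (lifting via the framed/lifting functor, or producing dense representations by another construction). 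As written, your program therefore proves the corollary only for large $\ell$, whereas the statement asserts it for every $\ell \nmid q$. A minor further point: the image of a lift of $\overline{\rho}_0$ \emph{surjects onto} $E_8(\bbF_\ell)$ rather than containing it; Zariski density still follows for large $\ell$ by the standard argument that a proper closed subgroup of $E_8$ of bounded complexity cannot map onto $E_8(\bbF_\ell)$, but the sentence should be corrected.
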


\section{Acknowledgments}

I am very grateful to Chandrashekhar Khare, Michael Harris, and Vincent Lafforgue for their comments on a draft version of this article. This project has received funding
from the European Research Council (ERC) under the European Union's Horizon
2020 research and innovation programme (grant agreement No 714405).

\bibliographystyle{alpha}
\bibliography{icmbib}

\begin{thebibliography}{BLGGT14}

\bibitem[AC89]{Art89}
James Arthur and Laurent Clozel.
\newblock {\em Simple algebras, base change, and the advanced theory of the
  trace formula}, volume 120 of {\em Annals of Mathematics Studies}.
\newblock Princeton University Press, Princeton, NJ, 1989.

\bibitem[Art89]{Art89a}
James Arthur.
\newblock Unipotent automorphic representations: conjectures.
\newblock {\em Ast\'erisque}, (171-172):13--71, 1989.
\newblock Orbites unipotentes et repr\'esentations, II.

\bibitem[BG02]{Bra02}
A.~Braverman and D.~Gaitsgory.
\newblock Geometric {E}isenstein series.
\newblock {\em Invent. Math.}, 150(2):287--384, 2002.

\bibitem[BHKT]{Boc17}
Gebhard B\"ockle, Michael Harris, Chandrashekhar Khare, and Jack~A. Thorne.
\newblock {$\widehat{G}$}-local systems on smooth projective curves are
  potentially automorphic.
\newblock Preprint. Available at \texttt{arXiv:1609.03491}.

\bibitem[BLGGT14]{Bar14}
Thomas Barnet-Lamb, Toby Gee, David Geraghty, and Richard Taylor.
\newblock Potential automorphy and change of weight.
\newblock {\em Ann. of Math. (2)}, 179(2):501--609, 2014.

\bibitem[Bor79]{Bor79}
A.~Borel.
\newblock Automorphic {$L$}-functions.
\newblock In {\em Automorphic forms, representations and {$L$}-functions
  ({P}roc. {S}ympos. {P}ure {M}ath., {O}regon {S}tate {U}niv., {C}orvallis,
  {O}re., 1977), {P}art 2}, Proc. Sympos. Pure Math., XXXIII, pages 27--61.
  Amer. Math. Soc., Providence, R.I., 1979.

\bibitem[Chi04]{Chi04}
CheeWhye Chin.
\newblock Independence of {$l$} of monodromy groups.
\newblock {\em J. Amer. Math. Soc.}, 17(3):723--747, 2004.

\bibitem[CHT08]{Clo08}
Laurent Clozel, Michael Harris, and Richard Taylor.
\newblock Automorphy for some {$l$}-adic lifts of automorphic mod {$l$}
  {G}alois representations.
\newblock {\em Publ. Math. Inst. Hautes \'Etudes Sci.}, (108):1--181, 2008.
\newblock With Appendix A, summarizing unpublished work of Russ Mann, and
  Appendix B by Marie-France Vign\'eras.

\bibitem[Del80]{Del80}
Pierre Deligne.
\newblock La conjecture de {W}eil. {II}.
\newblock {\em Inst. Hautes \'Etudes Sci. Publ. Math.}, (52):137--252, 1980.

\bibitem[Dia97]{Dia97}
Fred Diamond.
\newblock The {T}aylor-{W}iles construction and multiplicity one.
\newblock {\em Invent. Math.}, 128(2):379--391, 1997.

\bibitem[dJ01]{deJ01}
A.~J. de~Jong.
\newblock A conjecture on arithmetic fundamental groups.
\newblock {\em Israel J. Math.}, 121:61--84, 2001.

\bibitem[Dri]{Dri17}
Vladimir Drinfeld.
\newblock On the pro-semisimple completion of the fundamental group of a smooth
  variety over a finite field.
\newblock Preprint. Available at \texttt{arXiv:1509.06059}.

\bibitem[Kos59]{Kos59}
Bertram Kostant.
\newblock The principal three-dimensional subgroup and the {B}etti numbers of a
  complex simple {L}ie group.
\newblock {\em Amer. J. Math.}, 81:973--1032, 1959.

\bibitem[Laf]{Laf12}
Vincent Lafforgue.
\newblock Chtoucas pour les groupes r\'eductifs et param\'etrisation de
  {L}anglands globale.
\newblock Preprint. Available at \texttt{arXiv:1209.5352}.

\bibitem[Laf02]{Laf02}
Laurent Lafforgue.
\newblock Chtoucas de {D}rinfeld et correspondance de {L}anglands.
\newblock {\em Invent. Math.}, 147(1):1--241, 2002.

\bibitem[Lar95]{Lar95}
M.~Larsen.
\newblock Maximality of {G}alois actions for compatible systems.
\newblock {\em Duke Math. J.}, 80(3):601--630, 1995.

\bibitem[MB90]{Mor90}
Laurent Moret-Bailly.
\newblock Extensions de corps globaux \`a ramification et groupe de {G}alois
  donn\'es.
\newblock {\em C. R. Acad. Sci. Paris S\'er. I Math.}, 311(6):273--276, 1990.

\bibitem[Pro76]{Pro76}
C.~Procesi.
\newblock The invariant theory of {$n\times n$} matrices.
\newblock {\em Advances in Math.}, 19(3):306--381, 1976.

\bibitem[Ric88]{Ric88}
R.~W. Richardson.
\newblock Conjugacy classes of {$n$}-tuples in {L}ie algebras and algebraic
  groups.
\newblock {\em Duke Math. J.}, 57(1):1--35, 1988.

\bibitem[Ser05]{Ser05}
Jean-Pierre Serre.
\newblock Compl\`ete r\'eductibilit\'e.
\newblock {\em Ast\'erisque}, (299):Exp. No. 932, viii, 195--217, 2005.
\newblock S\'eminaire Bourbaki. Vol. 2003/2004.

\bibitem[Tat79]{Tat79}
J.~Tate.
\newblock Number theoretic background.
\newblock In {\em Automorphic forms, representations and {$L$}-functions
  ({P}roc. {S}ympos. {P}ure {M}ath., {O}regon {S}tate {U}niv., {C}orvallis,
  {O}re., 1977), {P}art 2}, Proc. Sympos. Pure Math., XXXIII, pages 3--26.
  Amer. Math. Soc., Providence, R.I., 1979.

\bibitem[Tay91]{Tay91}
Richard Taylor.
\newblock Galois representations associated to {S}iegel modular forms of low
  weight.
\newblock {\em Duke Math. J.}, 63(2):281--332, 1991.

\bibitem[Tho12]{Tho12}
Jack Thorne.
\newblock On the automorphy of {$l$}-adic {G}alois representations with small
  residual image.
\newblock {\em J. Inst. Math. Jussieu}, 11(4):855--920, 2012.
\newblock With an appendix by Robert Guralnick, Florian Herzig, Richard Taylor
  and Thorne.

\bibitem[Wan12]{Wan12}
Song Wang.
\newblock On dimension data and local vs. global conjugacy.
\newblock In {\em Fifth {I}nternational {C}ongress of {C}hinese
  {M}athematicians. {P}art 1, 2}, volume~2 of {\em AMS/IP Stud. Adv. Math., 51,
  pt. 1}, pages 365--382. Amer. Math. Soc., Providence, RI, 2012.

\bibitem[Wil95]{Wil95}
Andrew Wiles.
\newblock Modular elliptic curves and {F}ermat's last theorem.
\newblock {\em Ann. of Math. (2)}, 141(3):443--551, 1995.

\end{thebibliography}
\end{document}